\pgfplotsset{compat=1.15}
\definecolor{darkbrown}{rgb}{0.57, 0.40, 0.13}
\newcommand{\myspace}[1]{\mathbb{#1}}
\newcommand{\myvec}[1]{\mathfrak{#1}}          
\newcommand{\mymatrix}[1]{\boldsymbol{#1}}     
\newcommand{\E}{\mathsf{E}}
\newcommand{\osc}{\operatorname{osc}}
\newtheorem{theorem}{Theorem}
\newtheorem{corollary}[theorem]  {Corollary}
\newtheorem{lemma}[theorem]      {Lemma}
\theoremstyle{definition}
\newtheorem*{remark*}	{Remark}
\begin{document}

\title[A Posteriori Error Estimates for $hp$-FE Discretizations in Elastoplasticity]{A Posteriori Error Estimates for $hp$-FE Discretizations in Elastoplasticity}

\author[P.~Bammer, L.~Banz \and A.~Schr\"{o}der]{Patrick Bammer, Lothar Banz \and Andreas Schr\"{o}der}

\address{Fachbereich Mathematik, Paris Lodron Universit\"{a}t Salzburg,\\ Hellbrunner~Str.~14, 5020 Salzburg, Austria}

\begin{abstract}
In this paper, a reliable a posteriori error estimator for a model problem of elastoplasticity with linear kinematic hardening is derived, which satisfies some (local) efficiency estimates. It is applicable to any discretization that is conforming with respect to the displacement field and the plastic strain. Furthermore, the paper presents $hp$-finite element discretizations relying on a variational inequality as well as on a mixed variational formulation and discusses their equivalence by using biorthogonal basis functions. Numerical experiments demonstrate the applicability of the theoretical findings and underline the potential of $h$- and $hp$-adaptive finite element discretizations for problems of elastoplasticity.
\end{abstract}

\keywords{elastoplasticity, a posteriori error estimates, $hp$-finite elements.}

\thanks{%
A.~Schr\"{o}der acknowledges the support by the Bundesministerium f\"{u}r Bildung, Wissenschaft und Forschung (BMBWF) under the Sparkling Science project SPA 01-080 'MAJA -- Mathematische Algorithmen für Jedermann Analysiert'.
}

\subjclass[2010]{65N30, 65N50}

\maketitle
	

\section{Introduction}

Elastoplasticity with hardening appears in many problems of mechanical engineering. Thereby, the holonomic constitutive law represents a well established model for elastoplasticity with linear kinematic hardening, which allows for the incremental computation of the deformation of an elastoplastic body, see e.g.~\cite{han1991finite, Han2013}. A well known weak formulation of a (pseudo-)time step of this model takes the form of a variational inequalitiy of the second kind that includes a non-differentiable plasticity functional $\psi(\cdot)$. One possible way to resolve the non-differentiability of $\psi(\cdot)$ is to regularize it as proposed e.g.~in \cite{reddy1987variational}. Another possibility to avoid difficulties resulting from the non-differentiability of $\psi(\cdot)$ consists in the introduction of an appropriate Lagrange multiplier within a mixed formulation, see e.g.~\cite{han1991finite, han1995finite, schroder2011error}. This, in particular, offers discretization approaches for problems of elastoplasticity with finite elements. However, the use of a Lagrange multiplier as an additional variable leads to a substantial increase of the number of degrees of freedom in the discretization, as it contains the same number of unknowns as the plastic strain variable. 

A~posteriori error control is an essential tool to measure the quality of the discretization and to steer adaptive finite element schemes. Typically, it relies on
the derivation of upper and lower bounds by specifying reliable and efficent a posteriori error estimators \cite{Ainsworth2000,Verfuerth2013}. Error control approaches for finite elements of low order in the context of elastoplasticity with hardening can be found in \cite{Alberty1999,carstensen1999numerical,Carstensen2003,carstensen2006reliable,schroder2011error}. We refer to \cite{Carstensen2016} on the optimal convergence of adaptive schemes based on a~posteriori error control. 

In this paper, we present a residual-based a~posteriori error estimator for a model problem of elastoplasticity with linear kinematic hardening. The error estimator is derived from upper and lower error estimates based on a suitable auxiliary problem given by a variational equation. A similar concept is used for the derivation of error estimates in the context of contact problems, see e.g.~\cite{Braess2005,Vesser2001}, with respect to low-order finite elements an \cite{banz2022priori, banz2020posteriori, banz2015biorthogonal, Banz2021abstract, burg2015posteriori, Schroeder2009, Schroeder2010, Schroeder2012} for finite elements of higher-order. We prove the reliability of the error estimator and show that it satisfies some (local) efficiency estimates, which are, however, suboptimal for higher-order methods in terms of the plastic strain as expected. The proposed approach is applicable to any discretization that is conforming with respect to the displacement field and the plastic strain. In particular, it can be applied to finite elements of higher-order or even to $hp$-finite elements (with varying mesh sizes and local polynomial degrees) as well as to approximations of them (e.g.~resulting from iterative solution schemes). For this purpose, we discuss three closely related $hp$-finite element approaches which can be used to discretize the model problem of elastoplasticity. The first one relies on the discretization of the variational inequality, where the non-differentiable plasticity functional $\psi(\cdot)$ is approximated by interpolation, cf.~\cite{Bammer2022Icosahom,gwinner2009p}. The two further discretizations are based on a mixed variational formulation and only differ in the choice of the set of admissible discrete Lagrange multipliers. Under a rather weak assumption on the shapes of the mesh elements all three discretizations turn out to be equivalent. This equivalence can be shown with the help of biorthogonal basis functions, which in addition allow to decouple the constraints associated with the discrete Lagrange multiplier, see \cite{Bammer2022Icosahom,banz2015biorthogonal}. To illustrate the applicability of the discretization approaches we consider several numerical examples. In particular, we discuss $h$- and $hp$-adaptive schemes steered by the proposed residual based a~posteriori error estimator. We observe that the convergence rates of the adaptive refinements are significantly superior to those of the uniform refinements as the associated finite element spaces are adapted to the singular behavior of the solution. This particularly concerns the free boundary resulting from the transition from pure elastic to elastoplastic deformation. According to these observations, the experiments demonstrates the potential of $h$- and $hp$-adaptivity for problems of elastoplasticity. \\

The paper is structured as follows: In Section~\ref{sec:model} the model problem of elastoplasticity with linear kinematic hardening and its weak formulation as a variational inequality of the second kind as well as a mixed variational formulation are presented. In Section~\ref{sec:discretization} the three $hp$-finite element discretizations (one based on the variational inequality and two based on the mixed variational formulation) are introduced. The upper and lower error estimates (relying on a suitable auxiliary problem) and the derivation of the posteriori error estimator are discussed in Section~\ref{sec:abstractAposteriori}. Finally, several numerical experiments highlighting the applicability of the theoretical findings can be found in Section~\ref{sec:numeric}. 

\medskip


\section{Elastoplasticity with Linear Kinematic Hardening}
\label{sec:model}

Let $\Omega\subset \myspace{R}^d$ with $d\in\lbrace 2,3\rbrace$ be a bounded, polygonal domain with Lipschitz-boundary $\Gamma:=\partial\Omega$ and outer unit normal $\myvec{n}$ and define the vector spaces
\begin{align*}
 V := \Big\lbrace \myvec{v} \in H^1(\Omega, \myspace{R}^d) \; ; \; \myvec{v}_{\, | \, \Gamma_D} = \myvec{o} \Big\rbrace, \qquad
 Q := L^2(\Omega,\myspace{S}_{d,0}),
\end{align*}
where
\begin{align*}
 \myspace{S}_{d,0} := \bigg\lbrace \mymatrix{\tau} \in \myspace{R}^{d \times d}  \; ; \; \mymatrix{\tau} = \mymatrix{\tau}^{\top},\  \operatorname{tr}(\mymatrix{\tau}) := \sum_{i=1}^d \tau_{ii} = 0 \bigg\rbrace.
\end{align*}
Moreover, let $( \cdot,\cdot)_{0,\Omega}$ denote the $L^2(\Omega, X)$ inner product for $X\in\lbrace \myspace{R}, \myspace{R}^d, \myspace{R}^{d \times d}\rbrace$, inducing the $L^2$-norm $ \Vert \cdot \Vert_{0, \Omega}$. Due to Korn's inequality we may equip $V$ with the norm $\Vert \myvec{v}\Vert_{1,\Omega} := \big( \Vert \myvec{v} \Vert_{0,\Omega}^2 + |\myvec{v}|_{1,\Omega}^2 \big)^{1/2}$, where $|\myvec{v}|_{1,\Omega}^2 := \big(\mymatrix{\varepsilon}(\myvec{v}),\mymatrix{\varepsilon}(\myvec{v})\big)_{0,\Omega}$ with the linearized strain tensor $\mymatrix{\varepsilon}(\myvec{v}) := \frac{1}{2} \, \big(\nabla\myvec{v} + (\nabla\myvec{v})^{\top} \big)$. Furthermore, let $V^*$ be the dual space of $V$ equipped with the dual norm $\Vert \cdot \Vert_{V^*}$. We use the expression $A \lesssim B$ to hide the constant $c>0$ in the expression $A\leq c \, B$ if $c$ is independent of the element size $h$ and the polynomial degree $p$. If we wish to emphasize that the hidden constant $c$ depends on $x$ we may write $\lesssim_x$. If $A\lesssim B$ and $B\lesssim A$ we use the notation $A\approx B$.


\subsection{The model problem}

The model problem of elastoplasticity with linear kinematic hardening, see e.g.~\cite{Bammer2023Apriori,Han2013}, is to find a displacement field $\myvec{u} \in V$ and a plastic strain $\mymatrix{p} \in Q$ such that for a given volume force $\myvec{f}$ and a given surface force $\myvec{g}$ there holds
\begin{subequations}\label{eq:model_problem}
\begin{alignat}{2}
 -\operatorname{div} \mymatrix{\sigma}(\myvec{u},\mymatrix{p}) &= \myvec{f}   & \quad & \text{in } \Omega, \label{eq:model_PDE} \\
 \myvec{u} &= \myvec{o}                                            & \quad & \text{on } \Gamma_D, \\
 \mymatrix{\sigma}(\myvec{u},\mymatrix{p}) \, \myvec{n} &= \myvec{g}  & \quad & \text{on } \Gamma_N, \label{eq:model_boundaryCond} \\
 \mymatrix{\sigma}(\myvec{u},\mymatrix{p})-\myspace{H} \, \mymatrix{p} &\in \partial j(\mymatrix{p}) & \quad &  \text{in } \Omega .\label{eq:differntial_inclusion}
\end{alignat}
\end{subequations}
Thereby, the Dirichlet boundary part $\Gamma_D\subseteq\Gamma$ is assumed to be closed and to have positive surface measure. The Neumann boundary part is given by $\Gamma_N:=\Gamma\setminus\Gamma_D$. Moreover, the stress tensor is defined as $\mymatrix{\sigma}(\myvec{u},\mymatrix{p}) := \myspace{C} \, (\mymatrix{\varepsilon}(\myvec{u})-\mymatrix{p})$. Here, the elasticity tensor $\myspace{C}$ as well as the hardening tensor $\myspace{H}$ are assumed to be symmetric, uniformly bounded and uniformly elliptic, see e.g.~\cite{Bammer2023Apriori}. Furthermore, $\partial j(\cdot)$ represents the subdifferential of the plastic dissipation functional $j(\cdot)$ which is given by $j(\mymatrix{q}) := \sigma_y \, \vert\mymatrix{q}\vert_F$, where $\vert \cdot \vert_F$ denotes the Frobenius norm induced by the Frobenius inner product $\mymatrix{p}:\mymatrix{q} := \sum_{i,j} \mymatrix{p}_{ij} \, \mymatrix{q}_{ij}$. For the ease of presentation, the yield stress $\sigma_y>0$ in uniaxial tension is assumed to be constant. Recall that the deviatoric part of a matrix $\mymatrix{\tau}\in\myspace{R}^{d\times d}$ is given by $\operatorname{dev}(\mymatrix{\tau}) := \mymatrix{\tau} - \frac{1}{d} \, \operatorname{tr}(\mymatrix{\tau}) \, \mymatrix{I}$ where $\mymatrix{I} \in \myspace{R}^{d\times d}$ is the identity matrix.
Since $\operatorname{tr}(\mymatrix{q}) = 0$ for any $\mymatrix{q}\in Q$ we immediately obtain the identity
\begin{align}\label{eq:identity_deviatoricPart}
 \big( \operatorname{dev}(\mymatrix{\mu}), \mymatrix{q} \big)_{0,\Omega}
 = ( \mymatrix{\mu}, \mymatrix{q})_{0,\Omega} - \frac{1}{d} \, \big( \operatorname{tr}(\mymatrix{\mu}) \, \mymatrix{I}, \mymatrix{q} \big)_{0,\Omega}
 = ( \mymatrix{\mu}, \mymatrix{q})_{0,\Omega} - \frac{1}{d} \, \big( \operatorname{tr}(\mymatrix{\mu}), \operatorname{tr}(\mymatrix{q}) \big)_{0,\Omega}
 = ( \mymatrix{\mu}, \mymatrix{q})_{0,\Omega} 
\end{align}
for all $\mymatrix{q}\in Q$ and $\mymatrix{\mu}\in L^2(\Omega,\myspace{R}^{d\times d})$.

\subsection{Weak formulations}

A well established weak formulation of \eqref{eq:model_problem} is given by the following variational inequality of the second kind: Find a pair $(\myvec{u},\mymatrix{p})\in V\times Q$ such that
\begin{align}\label{eq:variq_second_kind}
 a\big( (\myvec{u},\mymatrix{p}), (\myvec{v}-\myvec{u},\mymatrix{q}-\mymatrix{p}) \big) + \psi(\mymatrix{q}) - \psi(\mymatrix{p}) \geq \ell(\myvec{v}-\myvec{u}) \qquad 
 \forall \, (\myvec{v},\mymatrix{q})\in V\times Q.
\end{align}
Thereby, the bilinear form 
\begin{align}\label{eq:defi_bilinearF_a}
 a\big( (\myvec{u},\mymatrix{p}),(\myvec{v},\mymatrix{q}) \big) 
  := \big( \mymatrix{\sigma}(\myvec{u},\mymatrix{p}), \mymatrix{\varepsilon}(\myvec{v})-\mymatrix{q} \big)_{0,\Omega} + (\myspace{H} \, \mymatrix{p}, \mymatrix{q})_{0,\Omega}
\end{align}
for $(\myvec{u},\mymatrix{p}), (\myvec{v},\mymatrix{q})\in V\times Q$ is continuous and $(V\times Q)$-elliptic, i.e.~there exist constants $c_a,\alpha>0$ such that
\begin{align*}
 a\big( (\myvec{u},\mymatrix{p}),(\myvec{v},\mymatrix{q}) \big) 
 \leq c_a \, \Vert (\myvec{u},\mymatrix{p})\Vert \, \Vert (\myvec{v},\mymatrix{q}) \Vert, \quad
 \alpha \, \Vert (\myvec{v},\mymatrix{q})\Vert^2 
 \leq a\big( (\myvec{v},\mymatrix{q}),(\myvec{v},\mymatrix{q}) \big) \qquad 
 \forall \, (\myvec{u},\mymatrix{p}), (\myvec{v},\mymatrix{q})\in V\times Q,
\end{align*}
respectively, where $\Vert (\myvec{v},\mymatrix{q}) \Vert := \big( \Vert \myvec{v}\Vert_{1,\Omega}^2 + \Vert \mymatrix{q}\Vert_{0,\Omega}^2 \big)^{1/2}$ is a norm on the Hilbert space $V \times Q$, see e.g.~\cite{Han2013}.
The convex, continuous and subdifferential plasticity functional $\psi(\cdot)$ and the continuous load functional $\ell(\cdot)$ in \eqref{eq:variq_second_kind} are given by
\begin{align*}
 \psi(\mymatrix{q}) := (\sigma_y,\vert \mymatrix{q}\vert_F)_{0,\Omega}, \quad
 \ell(\myvec{v}) := \langle \myvec{f},\myvec{v} \rangle + \langle \myvec{g},\myvec{v} \rangle_{\Gamma_N}
\end{align*}
for $(\myvec{v},\mymatrix{q})\in V\times Q$, where $\langle \cdot, \cdot \rangle$ and $\langle \cdot , \cdot \rangle_{\Gamma_N}$ denote the duality pairing between $V^*$ and $V$, and $H^{-1/2}(\Gamma_N,\myspace{R}^d)$ and $\widetilde{H}^{1/2}(\Gamma_N,\myspace{R}^d):= V|_{\Gamma_N}$, respectively.
In many cases it is beneficial to consider a mixed variational formulation of the variational inequality \eqref{eq:variq_second_kind}. For this purpose, let
\begin{align*}
 \Lambda 
 := \big\lbrace \mymatrix{\mu}\in Q \; ; \; \vert\mymatrix{\mu}\vert_F \leq \sigma_y \text{ a.e.~in }\Omega \big\rbrace
\end{align*}
be the non-empty, convex and closed set of admissible Lagrange multipliers. It is shown in \cite{Bammer2023Apriori} that $\Lambda$ can be alternatively represented as
\begin{align*}
 \Lambda
 = \big\lbrace \mymatrix{\mu}\in Q \; ; \; (\mymatrix{\mu}, \mymatrix{q})_{0,\Omega} \leq \psi(\mymatrix{q}) \text{ for all } \mymatrix{q}\in Q \big\rbrace.
\end{align*}
A mixed variational formulation is to find a triple $(\myvec{u},\mymatrix{p},\mymatrix{\lambda})\in V\times Q\times \Lambda$ such that
\begin{subequations}\label{eq:mixed_variationalF}
\begin{alignat}{2}
a\big( (\myvec{u},\mymatrix{p}),(\myvec{v},\mymatrix{q}) \big) + (\mymatrix{\lambda}, \mymatrix{q})_{0,\Omega} &= \ell(\myvec{v})
& \qquad &\forall \, (\myvec{v},\mymatrix{q})\in V \times Q, \label{eq:mixed_variationalF_01}\\
(\mymatrix{\mu}-\mymatrix{\lambda}, \mymatrix{p})_{0,\Omega} &\leq 0
& \qquad &\forall \, \mymatrix{\mu}\in\Lambda. \label{eq:mixed_variationalF_02}
\end{alignat}
\end{subequations} 

Let us summarize some fundamental properties of the weak formulations \eqref{eq:variq_second_kind} and \eqref{eq:mixed_variationalF}: First of all, it is well known, see e.g.~\cite{Han2013}, that for $\myvec{f}\in V^*$ and $\myvec{g}\in H^{-1/2}(\Gamma_N,\myspace{R}^d)$ there exists a unique solution $(\myvec{u},\mymatrix{p})\in V\times Q$ of the variational inequality \eqref{eq:variq_second_kind}. Moreover, it is shown in \cite{Bammer2023Apriori} that the formulations \eqref{eq:variq_second_kind} and \eqref{eq:mixed_variationalF} are equivalent in the sense that if $(\myvec{u},\mymatrix{p})\in V\times Q$ solves \eqref{eq:variq_second_kind}, then $(\myvec{u},\mymatrix{p},\mymatrix{\lambda})$ with
\begin{align}\label{eq:repres_continuousLambda}
 \mymatrix{\lambda} 
 = \operatorname{dev}\big( \mymatrix{\sigma}(\myvec{u},\mymatrix{p})-\myspace{H} \, \mymatrix{p} \big)
\end{align}
is a solution to \eqref{eq:mixed_variationalF} and, conversely, if $(\myvec{u},\mymatrix{p},\mymatrix{\lambda})\in V\times Q \times \Lambda$ solves \eqref{eq:mixed_variationalF}, then $(\myvec{u},\mymatrix{p})$ is a solution of \eqref{eq:variq_second_kind} and there holds the identity \eqref{eq:repres_continuousLambda}. As a consequence, there exists a unique solution to the mixed variational problem \eqref{eq:mixed_variationalF} and it holds
\begin{align}\label{eq:plambda}
 \mymatrix{p} : \mymatrix{\lambda} = \sigma_y \, \vert\mymatrix{p}\vert_F \quad
 \text{a.e.~in } \Omega,
\end{align}
cf.~\cite{Bammer2023Apriori}. Finally, by \cite{Bammer2023Apriori}, the solution $(\myvec{u},\mymatrix{p},\mymatrix{\lambda})\in V\times Q \times \Lambda$ of the the mixed variational problem \eqref{eq:mixed_variationalF} depends Lipschitz-continuously on the data  $\myvec{f}$, $\myvec{g}$ and $\sigma_y$. More precisely, there holds
\begin{align}\label{eq:Lipschitz_depend}
 \Vert (\myvec{u}_2-\myvec{u}_1, \mymatrix{p}_2-\mymatrix{p}_1) \Vert + \Vert \mymatrix{\lambda}_2-\mymatrix{\lambda}_1 \Vert_{0,\Omega}  
 \lesssim  \Vert \sigma_{y,2}-\sigma_{y,1} \Vert_{0,\Omega} + \Vert \myvec{f}_2-\myvec{f}_1\Vert_{V^*} + \Vert \myvec{g}_2-\myvec{g}_1\Vert_{H^{-1/2}(\Gamma_N,\myspace{R}^d)}, 
\end{align}
where $(\myvec{u}_i,\mymatrix{p}_i,\mymatrix{\lambda}_i)$, for $i=1,2$, is the solution to the data $(\myvec{f}_i,\myvec{g}_i,\sigma_{y,i})$.

\medskip


\section{$hp$-Finite Element Discretizations} 
\label{sec:discretization}

For the discretization with $hp$-finite elements let $\mathcal{T}_{h}$ be a locally quasi-uniform finite element mesh of $\Omega$ consisting of convex and shape regular quadrilaterals or hexahedrons, respectively. Moreover, let $\widehat{T}:=[-1,1]^d$ be the reference element and let $\myvec{F}_T:\widehat{T}\rightarrow T$ denote the bi/tri-linear bijective mapping for $T\in\mathcal{T}_h$.
We set $h := (h_T)_{T\in\mathcal{T}_h}$ and $p := (p_T)_{T\in\mathcal{T}_h}$ where $h_T$ and $p_T$ denote the local element size and the local polynomial degree, respectively. We assume that the local polynomial degrees of neighboring elements are comparable and refer to \cite{Melenk2005} for details on quasi-uniformity and comparable polynomial degrees. Some of the following results exploit the exactness of the Gauss quadrature for polynomials. In these cases we additionally assume that
\begin{align}\label{eq:requirement_detF}
 \det \nabla \mathfrak{F}_T \in \myspace{P}_1(\widehat{T}) \qquad 
 \forall \, T\in\mathcal{T}_h \text{ with } p_T\geq 2.
\end{align}
Note that  $\det \nabla\mathfrak{F}_T$ has no change of sign in $\widehat{T}$. While \eqref{eq:requirement_detF} is not a restriction for lower-order methods or the two dimensional case it slightly limits the shape of mesh elements in the case that $d=3$. For the discretization of the displacement field and of the plastic strain we use the $hp$-finite element spaces
\begin{align*}
 V_{hp} &:= \Big\lbrace \myvec{v}_{hp}\in V \; ; \; \myvec{v}_{hp \, | \, T}\circ \myvec{F}_T\in \big(\myspace{P}_{p_T}(\widehat{T})\big)^d \text{ for all } T\in\mathcal{T}_h\Big\rbrace, \\
 Q_{hp} &:= \Big\lbrace \mymatrix{q}_{hp}\in Q \; ; \; \mymatrix{q}_{hp \, | \, T}\circ \myvec{F}_T\in \big(\myspace{P}_{p_T-1}(\widehat{T})\big)^{d\times d} \text{ for all } T\in\mathcal{T}_h\Big\rbrace.
\end{align*}
Furthermore, let $\hat{\myvec{x}}_{k,T}\in \widehat{T}$ for $1\leq k \leq n_T$ be the tensor product Gauss quadrature points on $\widehat{T}$ and $\hat{\omega}_{k,T}\in\myspace{R}$ the corresponding positive weights where $n_T := p_T^d$ for $T\in\mathcal{T}_h$. Thereby, we introduce the mesh dependent quadrature rule
\begin{align*}
 \mathcal{Q}_{hp}(\cdot) := \sum_{T\in\mathcal{T}_h} \mathcal{Q}_{hp, T}(\cdot),
\end{align*}
where the local quantities $\mathcal{Q}_{hp, T}(\cdot)$ are given by
\begin{align*}
 \mathcal{Q}_{hp, T}(f) :=
 \begin{cases}
  |T| \, f\big(\mathfrak{F}_T(\myvec{0})\big), & \text{if } p_T = 1, \\
  \sum_{k=1}^{n_T} \hat{\omega}_{k,T} \, |\det \nabla \mathfrak{F}_T(\hat{\mathfrak{x}}_{k,T})| \, f\big(\mathfrak{F}_T(\hat{\myvec{x}}_{k,T})\big), & \text{if } p_T \geq 2,
 \end{cases}
 \qquad T\in\mathcal{T}_h.
\end{align*}
Here, $|T|$ denotes the $d$-dimensional Lebesque-measure of $T$. On elements $T\in\mathcal{T}_h$ with $p_T \geq 2$ the quadrature rule $\mathcal{Q}_{hp}(\cdot)$ represents the standard Gauss quadrature on the reference element, whereas $\mathcal{Q}_{hp}(\cdot)$ is the midpoint rule on elements $T\in\mathcal{T}_h$ with $p_T=1$. Note that the Gauss quadrature on the reference element with only one point is not exact in the case that $\det \nabla\mathfrak{F}_T$ is a polynomial of degree $\geq 2$; even for constant $f$. 

\medskip

\subsection{Discretization of the variational inequality}

We use the quadrature rule $\mathcal{Q}_{hp}(\cdot)$ to approximate the plasticity functional $\psi(\cdot)$, which is appearing in \eqref{eq:variq_second_kind}, leading to the discrete plasticity functional given by
\begin{align} \label{eq:psi_ha_as_quadrature}
\psi_{hp}(\mymatrix{q}_{hp}) := \mathcal{Q}_{hp}\big( \psi(\mymatrix{q}_{hp}) \big) 
= \mathcal{Q}_{hp} \big( \sigma_y \, \vert \mymatrix{q}_{hp}\vert_F \big)
\end{align}
as proposed, e.g.~in \cite{gwinner2009p} for Tresca friction. Thereby, the discrete variational inequality is to find a pair $(\myvec{u}_{hp}, \mymatrix{p}_{hp})\in V_{hp}\times Q_{hp}$ such that
\begin{align}\label{eq:discrete_variq_second_kind}
 a\big( (\myvec{u}_{hp},\mymatrix{p}_{hp}), (\myvec{v}_{hp}-\myvec{u}_{hp}, \mymatrix{q}_{hp}-\mymatrix{p}_{hp}) \big) + \psi_{hp}(\mymatrix{q}_{hp}) - \psi_{hp}(\mymatrix{p}_{hp}) 
 \geq \ell(\myvec{v}_{hp} - \myvec{u}_{hp})
 \qquad \forall (\myvec{v}_{hp},\mymatrix{q}_{hp})\in V_{hp}\times Q_{hp}.
\end{align}

\medskip

\subsection{Two discretizations of the mixed variational formulation}\label{subsec:two_discr_mixed}

Alternatively, we may discretize the mixed variational formulation \eqref{eq:mixed_variationalF}. In this section, we present two such discretizations that differ only in the choice of the set $\Lambda_{hp}$ of admissible discrete Lagrange multipliers. For this purpose, we introduce
\begin{align}
 \Lambda_{hp}^{(s)} &:= \Big\lbrace \mymatrix{\mu}_{hp}\in Q_{hp} \; ; \; \big\vert \mymatrix{\mu}_{hp} \big( \myvec{F}_T (\hat{\myvec{x}}_{k,T})\big) \big\vert_F \leq \sigma_y \text{ for all } 1\leq k \leq n_T \text{ and } T \in \mathcal{T}_h \Big\rbrace, \\
  \Lambda_{hp}^{(w)} &:= \Big\lbrace \mymatrix{\mu}_{hp}\in Q_{hp} \; ; \; (\mymatrix{\mu}_{hp},\mymatrix{q}_{hp})_{0,\Omega} \leq \psi_{hp}(\mymatrix{q}_{hp}) \text{ for all } \mymatrix{q}_{hp}\in Q_{hp}  \Big\rbrace.
\end{align}
Thereby, the two discrete mixed formulations are: Find a triple $(\myvec{u}_{hp},\mymatrix{p}_{hp},\mymatrix{\lambda}_{hp})\in V_{hp}\times Q_{hp}\times\Lambda_{hp}^{(i)}$, for $i \in \{s,w\}$, such that
\begin{subequations}\label{eq:discrete_mixed_variationalF}
\begin{alignat}{2}
 a\big( (\myvec{u}_{hp},\mymatrix{p}_{hp}),(\myvec{v}_{hp},\mymatrix{q}_{hp}) \big) + (\mymatrix{\lambda}_{hp}, \mymatrix{q}_{hp})_{0,\Omega} &= \ell(\myvec{v}_{hp}) & \qquad & \forall \, (\myvec{v}_{hp},\mymatrix{q}_{hp}) \in V_{hp}\times Q_{hp},\label{eq:discrete_mixed_variationalF_01}\\
 (\mymatrix{\mu}_{hp}-\mymatrix{\lambda}_{hp}, \mymatrix{p}_{hp})_{0,\Omega}
 &\leq 0 & \quad & \forall \, \mymatrix{\mu}_{hp} \in \Lambda_{hp}^{(i)}.\label{eq:discrete_mixed_variationalF_02}
\end{alignat}
\end{subequations}


\medskip

\subsection{Relation between the three discretizations}

In this section we prove the equivalence of the discrete variational inequality  \eqref{eq:discrete_variq_second_kind} and the discrete mixed formulation \eqref{eq:discrete_mixed_variationalF} with the specific choice $\Lambda_{hp}^{(i)}=\Lambda_{hp}^{(w)}$ (even if \eqref{eq:requirement_detF} does not hold). We note that this result can be generalized to other element shapes such as triangles, tetrahedrons or pyramids. It is shown in \cite{Bammer2023Apriori} that under the assumption \eqref{eq:requirement_detF} there holds $\Lambda_{hp}^{(s)}  = \Lambda_{hp}^{(w)} $. As a consequence, the two discrete mixed variational formulations \eqref{eq:discrete_mixed_variationalF} coincide in this case.
\vspace{0.2cm}

Let $\{\widehat{\phi}_{k,T}\}_{k=1,\ldots,n_T}$ be the Lagrange basis functions on $\widehat{T}$ defined via the Gauss points $\hat{\myvec{x}}_{l,T}$, i.e.
\begin{align*}
 \widehat{\phi}_{k,T}\in\myspace{P}_{p_T-1}(\widehat{T}), \quad
 \widehat{\phi}_{k,T}(\hat{\myvec{x}}_{l,T})=\delta_{kl} \qquad
 \forall \, 1\leq k,l\leq n_T \quad \forall \, T\in\mathcal{T}_h,
\end{align*}
where $\delta_{kl}$ is the usual Kronecker delta symbol. Moreover, let $\phi_1,\ldots,\phi_N$ be piecewisely defined as
\begin{align*}
 \phi_{\zeta(k,T')\, | \, T} 
 := \begin{cases}
                \widehat{\phi}_{k,T'}\circ\myvec{F}_{T'}^{-1},& \text{if } T=T',\\
                0,& \text{if } T\neq T', \\
    \end{cases} 
 \qquad  T,T'\in\mathcal{T}_h, \quad 1\leq k\leq n_T,
\end{align*}
where $\zeta:\big\lbrace (k,T) \; ; \;  T\in\mathcal{T}_h, \, 1\leq k\leq n_T \big\rbrace\rightarrow \{1,\ldots,N\}$ with $N :=\sum_{T\in\mathcal{T}_h} n_T$ is a one to one numbering. Obviously, $\lbrace\phi_1,\ldots,\phi_N\rbrace$ forms a basis of the $hp$-finite element space
\begin{align*}
 W_{hp}:=\left\{q_{hp}\in L^2(\Omega) \; ; \; q_{hp \, | \, T}\circ \myvec{F}_T\in \myspace{P}_{p_T-1}(\widehat{T})  \text{ for all } T\in\mathcal{T}_h\right\}.
\end{align*}
Furthermore, let $\varphi_1,\ldots,\varphi_N$ be the biorthogonal basis functions to $\phi_1,\ldots,\phi_N$ that are uniquely determined by the conditions $\varphi_{\zeta(k,T)\, | \, T} \circ\myvec{F}_T\in\myspace{P}_{p_T-1}(\widehat{T})$ for $T\in\mathcal{T}_h$, $1\leq k\leq n_T$ and 
\begin{align*}
 (\phi_i,\varphi_j)_{0,\Omega} = \delta_{ij} \, (\phi_i, 1)_{0,\Omega} \qquad 
 \forall\, 1\leq i,j \leq N.
\end{align*}
As the functions $\varphi_1,\ldots,\varphi_N$ are linearly independent they form a basis of $W_{hp}$ as well and there holds
\begin{align*}
 \operatorname{supp}(\varphi_i) = \operatorname{supp}(\phi_i) \qquad
 \forall \, 1\leq i \leq N.
\end{align*}
Under the assumption \eqref{eq:requirement_detF} we have $\varphi_{\zeta(k,T)}=\phi_{\zeta(k,T)}$ for $1\leq k\leq n_T$ and $T\in\mathcal{T}_h$. We refer to \cite{banz2015biorthogonal} for the computation of connectivity matrices to construct the global biorthogonal basis functions in the case that \eqref{eq:requirement_detF} does not hold true.
Note that for all $1\leq i\leq N$ the quantities $D_i:=(\phi_i, 1)_{0,\Omega}$ are positive as $\phi_i$ is a Gauss-Legendre-Lagrange basis function and let
\begin{align}\label{eq:defi_sigmai}
 \sigma_i := D_i^{-1} (\sigma_y,\phi_i)_{0,\Omega} \qquad 
 \forall \, 1\leq i\leq N.
\end{align}
Since $\sigma_y$ is assumed to be a constant we have $\sigma_i = \sigma_y > 0$ for $1\leq i\leq N$. In view of the Lipschitz dependency of the solution on $\sigma_y$, see \eqref{eq:Lipschitz_depend}, the assumption that $\sigma_y$ is a constant can be weakened as long as $\sigma_i>0$ for any $1\leq i\leq N$, which is, for instance, the case if $\sigma_{y\, | \, T} \circ\myvec{F}_T\in\myspace{P}_{\max(p_T+1-d,0)}(\widehat{T})$. \\

The previous discussions yield the following equivalent representations
\begin{align}\label{eq:representation_Qhp}
 Q_{hp} 
 = \left\{ \sum_{i=1}^N \mymatrix{q}_i \, \phi_i \; ; \; \mymatrix{q}_i\in\myspace{S}_{d,0} \right\} 
 = \left\{ \sum_{i=1}^N \mymatrix{\mu}_i \, \varphi_i \; ; \; \mymatrix{\mu}_i\in\myspace{S}_{d,0} \right\},
\end{align}
which, in fact, allows the decoupling of the constraints in $\Lambda_{hp}^{(w)}$ and \eqref{eq:discrete_mixed_variationalF_02}: It is shown in \cite{Bammer2022Icosahom} that there holds
\begin{align}\label{eq:characterization_LagrangeMult}
 \Lambda_{hp}^{(w)} 
 = \left\{ \sum_{i=1}^N \mymatrix{\mu}_i \, \varphi_i \; ; \;  \mymatrix{\mu}_i \in \myspace{S}_{d,0} \text{ and } \vert \mymatrix{\mu}_i \vert_F \leq \sigma_i\right\}.
\end{align}
Furthermore, representing $\mymatrix{\lambda}_{hp}\in \Lambda_{hp}^{(w)}$ and $\mymatrix{p}_{hp}\in Q_{hp}$ as
\begin{align*}
 \mymatrix{\lambda}_{hp} = \sum_{i=1}^N \mymatrix{\lambda}_i \, \varphi_i, \quad
 \mymatrix{p}_{hp} = \sum_{i=1}^N \mymatrix{p}_i \, \phi_i,
\end{align*}
respectively, we observe that $\mymatrix{\lambda}_{hp}$ satisfies the inequality \eqref{eq:discrete_mixed_variationalF_02} if and only if 
\begin{align}\label{eq:char_IQconstraint}
 \mymatrix{\lambda}_i : \mymatrix{p}_i 
 = \sigma_i \, \vert \mymatrix{p}_i \vert_F \qquad
 \forall \, 1\leq i \leq N.
\end{align}

\begin{lemma}\label{lem:representation_Qhp_qhp}
For $\mymatrix{q}_{hp} = \sum_{i=1}^N \mymatrix{q}_i \, \phi_i\in Q_{hp}$ there holds
\begin{align}\label{eq:representation_Qhp}
 \psi_{hp}(\mymatrix{q}_{hp}) 
 = \sum_{i=1}^N | \mymatrix{q}_i |_F \, (\sigma_y, \phi_i)_{0,\Omega}.
\end{align}
\end{lemma}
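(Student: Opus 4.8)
The plan is to compute $\psi_{hp}(\mymatrix{q}_{hp})$ directly from its definition as the mesh-dependent quadrature of $\sigma_y \, |\mymatrix{q}_{hp}|_F$, and then exploit the fact that the $\phi_i$ are Gauss--Legendre--Lagrange basis functions, so that on each element the quadrature ``sees'' exactly the nodal coefficients $\mymatrix{q}_i$. First I would fix an element $T \in \mathcal{T}_h$ and restrict attention to the local contribution $\mathcal{Q}_{hp,T}(\sigma_y \, |\mymatrix{q}_{hp}|_F)$. On $T$ the function $\mymatrix{q}_{hp}$ equals $\sum_{k=1}^{n_T} \mymatrix{q}_{\zeta(k,T)} \, \widehat{\phi}_{k,T}\circ\myvec{F}_T^{-1}$, because all other global basis functions vanish on $T$. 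Since $\widehat{\phi}_{k,T}(\hat{\myvec{x}}_{l,T}) = \delta_{kl}$, evaluating at the $l$-th Gauss point of $T$ gives $\mymatrix{q}_{hp}(\myvec{F}_T(\hat{\myvec{x}}_{l,T})) = \mymatrix{q}_{\zeta(l,T)}$, hence $|\mymatrix{q}_{hp}(\myvec{F}_T(\hat{\myvec{x}}_{l,T}))|_F = |\mymatrix{q}_{\zeta(l,T)}|_F$.

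Next I would separate the two cases in the definition of $\mathcal{Q}_{hp,T}$. For $p_T \geq 2$ the quadrature is the genuine $n_T$-point Gauss rule, so
\begin{align*}
 \mathcal{Q}_{hp,T}(\sigma_y \, |\mymatrix{q}_{hp}|_F)
 = \sum_{l=1}^{n_T} \hat{\omega}_{l,T} \, |\det\nabla\myvec{F}_T(\hat{\myvec{x}}_{l,T})| \, \sigma_y \, |\mymatrix{q}_{\zeta(l,T)}|_F
 = \sum_{l=1}^{n_T} |\mymatrix{q}_{\zeta(l,T)}|_F \, \mathcal{Q}_{hp,T}(\sigma_y \, \widehat{\phi}_{l,T}\circ\myvec{F}_T^{-1}),
\end{align*}
and since $\sigma_y \, \widehat{\phi}_{l,T}\circ\myvec{F}_T^{-1}$ is a polynomial of degree $\leq p_T-1$ times the constant $\sigma_y$ — or, more carefully, $\sigma_y$ times an element of $\myspace{P}_{p_T-1}(\widehat{T})$ pulled back — the Gauss rule integrates it exactly, so this last quantity equals $(\sigma_y, \phi_{\zeta(l,T)})_{0,\Omega}$ (the integral over $\Omega$ reduces to the integral over $T$ since $\phi_{\zeta(l,T)}$ is supported there). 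For $p_T = 1$ the quadrature is the midpoint rule $|T| \, f(\myvec{F}_T(\myvec{0}))$; here $n_T = 1$, the single node is the midpoint, and $\phi_{\zeta(1,T)|T} \equiv 1$ on $T$, so $\mathcal{Q}_{hp,T}(\sigma_y \, |\mymatrix{q}_{hp}|_F) = |T| \, \sigma_y \, |\mymatrix{q}_{\zeta(1,T)}|_F = |\mymatrix{q}_{\zeta(1,T)}|_F \, (\sigma_y, \phi_{\zeta(1,T)})_{0,\Omega}$ as well. Summing over all $T \in \mathcal{T}_h$ and reindexing via the bijection $\zeta$ yields $\psi_{hp}(\mymatrix{q}_{hp}) = \sum_{i=1}^N |\mymatrix{q}_i|_F \, (\sigma_y, \phi_i)_{0,\Omega}$, which is the claim.

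The main obstacle, such as it is, lies in handling the two cases of $\mathcal{Q}_{hp,T}$ uniformly and in being careful about exactness: for $p_T \geq 2$ one needs that the Gauss rule with $n_T = p_T^d$ tensor-product points integrates $\widehat{\phi}_{l,T}$ exactly, which it does since $\widehat{\phi}_{l,T} \in \myspace{P}_{p_T-1}(\widehat{T})$ and the $p_T$-point Gauss rule is exact on $\myspace{P}_{2p_T-1}$ in each variable; one also needs $\det\nabla\myvec{F}_T$ to cause no trouble, but here the factor $|\det\nabla\myvec{F}_T(\hat{\myvec{x}}_{l,T})|$ is simply part of the definition of $\mathcal{Q}_{hp,T}$, so no assumption like \eqref{eq:requirement_detF} is needed — we are not claiming $\mathcal{Q}_{hp,T}$ equals a true integral, only that it reproduces $(\sigma_y,\phi_i)_{0,\Omega}$, and the latter is itself defined with the same $\det\nabla\myvec{F}_T$ under the change of variables. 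For $p_T = 1$ the only point worth noting is that $\phi_i$ restricted to $T$ is the constant $1$ and that the midpoint rule is, by definition, $|T|$ times evaluation at the midpoint, so everything collapses. The rest is bookkeeping with the index map $\zeta$.
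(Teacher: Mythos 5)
Your overall route is the same as the paper's: restrict to one element, use the Lagrange property of $\phi_{\zeta(l,T)}$ at the tensor Gauss nodes to pull the coefficients $|\mymatrix{q}_{\zeta(l,T)}|_F$ out of the quadrature, treat $p_T=1$ via the midpoint rule with $\phi_{\zeta(1,T)|T}\equiv 1$, and sum over $T$ via $\zeta$. The gap is in the justification of the key identity $\mathcal{Q}_{hp,T}(\sigma_y\,\phi_{\zeta(l,T)})=(\sigma_y,\phi_{\zeta(l,T)})_{0,T}$ for $p_T\geq 2$. Your closing remark that ``we are not claiming $\mathcal{Q}_{hp,T}$ equals a true integral'' and that therefore $\det\nabla\myvec{F}_T$ ``causes no trouble'' is circular: reproducing $(\sigma_y,\phi_{\zeta(l,T)})_{0,T}$ \emph{is} the claim that the quadrature equals a true integral, and after transformation to $\widehat{T}$ that integral reads $\int_{\widehat{T}}\sigma_y\,\widehat{\phi}_{l,T}\,\vert\det\nabla\myvec{F}_T\vert\,d\hat{x}$, while $\mathcal{Q}_{hp,T}$ is the tensor Gauss rule applied to exactly this Jacobian-weighted integrand. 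So the exactness you must verify is for the product $\widehat{\phi}_{l,T}\,\vert\det\nabla\myvec{F}_T\vert$, not merely for $\widehat{\phi}_{l,T}\in\myspace{P}_{p_T-1}(\widehat{T})$; checking only the latter, as you do, does not close the argument, and your stated reason why \eqref{eq:requirement_detF} is dispensable is not valid as written.

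The conclusion itself is fine, and \eqref{eq:requirement_detF} is indeed not needed, but for a concrete reason you should supply: $\det\nabla\myvec{F}_T$ has no change of sign, so $\vert\det\nabla\myvec{F}_T\vert=\pm\det\nabla\myvec{F}_T$ is a polynomial, and for a bi-/tri-linear map its degree in each reference variable is at most $d-1\leq 2$. Hence $\widehat{\phi}_{l,T}\,\det\nabla\myvec{F}_T$ has degree at most $p_T-1+(d-1)\leq p_T+1\leq 2p_T-1$ in each variable whenever $p_T\geq 2$, and the $p_T$-point tensor Gauss rule integrates it exactly; alternatively, one may simply invoke \eqref{eq:requirement_detF} when it is assumed. (This degree count is also why the paper can state the equivalence results of this section without \eqref{eq:requirement_detF}, while the one-point rule would fail this exactness for $p_T=1$, which is precisely why the midpoint rule with the exact factor $|T|$ is used there -- a case you handle correctly.) With this exactness argument inserted, your proof is complete and coincides with the paper's.
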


\begin{proof}
We first note that by using the numbering $\zeta$ we have for $T\in\mathcal{T}_h$
\begin{align*}
 \mymatrix{q}_{hp \, | \, T} 
 = \left. \left( \sum_{i=1}^N \mymatrix{q}_i \, \phi_i \right) \right|_{\, T}
 = \left. \left( \sum_{T'\in\mathcal{T}_h} \sum_{l=1}^{n_{T'}} \mymatrix{q}_{l,T'} \, \phi_{l,T'} \right) \right|_{\, T}
 = \sum_{l=1}^{n_T} \mymatrix{q}_{l,T} \, \phi_{l,T}
\end{align*}
as $\operatorname{supp}(\phi_{l,T'}) = T'$, where $i=\zeta(l,T')$. If $p_T = 1$ then $ \mymatrix{q}_{hp \, | \, T}$ is constant and, therefore,
\begin{align*} 
 \mathcal{Q}_{hp, T}\big( \sigma_y \, \big| \mymatrix{q}_{hp \, | \, T} \big|_F \big) 
 = |T| \, \sigma_y \, \big| \mymatrix{q}_{hp \, | \, T}  \big( \mathfrak{F}_T ( \myvec{0}) \big) \big|_F
 = |\mymatrix{q}_{1,T}|_F \, \sigma_y \, |T|
 = |\mymatrix{q}_{1,T}|_F \, (\sigma_y, \phi_{1,T})_{0,T}
\end{align*}
since $\phi_{1,T}\equiv 1$. For $p_T\geq 2$ the exactness of the Gauss quadrature leads to
\begin{align*}
 \mathcal{Q}_{hp, T}\big( \sigma_y \, \big| \mymatrix{q}_{hp \, | \, T} \big|_F \big)
 &= \sum_{k=1}^{n_T} \hat{\omega}_{k,T} \, \big| \det \nabla \mathfrak{F}_T(\hat{\myvec{x}}_{k,T}) \big| \, \sigma_y \, \big| \mymatrix{q}_{hp \, | \, T} \big(\mathfrak{F}_T ( \hat{\myvec{x}}_{k,T}) \big) \big|_F \\
 &= \sum_{k=1}^{n_T} \hat{\omega}_{k,T} \, \big| \det \nabla \mathfrak{F}_T(\hat{\myvec{x}}_{k,T}) \big| \, \sigma_y \, \left| \sum_{l=1}^{n_T} \mymatrix{q}_{l,T} \, \phi_{l,T} \big(\mathfrak{F}_T ( \hat{\myvec{x}}_{k,T}) \big) \right|_F \\
 &= \sum_{k=1}^{n_T} \hat{\omega}_{k,T} \, \big| \det \nabla \mathfrak{F}_T(\hat{\myvec{x}}_{k,T}) \big| \, \sigma_y \, \big| \mymatrix{q}_{k,T} \big|_F \, \phi_{k,T} \big(\mathfrak{F}_T ( \hat{\myvec{x}}_{k,T}) \big) \\
 &= \sum_{k=1}^{n_T} | \mymatrix{q}_{k,T} |_F \sum_{l=1}^{n_T} \hat{\omega}_{l,T} \, \big| \det \nabla \mathfrak{F}_T(\hat{\myvec{x}}_{l,T}) \big| \, \sigma_y \, \phi_{k,T} \big(\mathfrak{F}_T ( \hat{\myvec{x}}_{l,T}) \big) \\
 &= \sum_{k=1}^{n_T} | \mymatrix{q}_{k,T} |_F \, (\sigma_y, \phi_{k,T})_{0,T}
\end{align*}
as $\phi_{l,T}\big(\mathfrak{F}_T(\hat{\myvec{x}}_{k,T})\big) = \delta_{kl}$ for $1\leq k,l\leq n_T$. Thus,
\begin{align*} 
 \mathcal{Q}_{hp}\big( \sigma_y \, |\mymatrix{q}_{hp}|_F \big)
 = \sum_{T\in\mathcal{T}_h} \mathcal{Q}_{hp, T}\big( \sigma_y \, \big| \mymatrix{q}_{hp \, | \, T} \big|_F \big)
 = \sum_{T\in\mathcal{T}_h} \sum_{k=1}^{n_T} | \mymatrix{q}_{k,T} |_F \, (\sigma_y, \phi_{k,T})_{0,T}
 = \sum_{i=1}^N | \mymatrix{q}_i |_F \, (\sigma_y, \phi_i)_{0,\Omega},
\end{align*}
which is \eqref{eq:representation_Qhp} exploiting that $\psi_{hp}(\mymatrix{q}_{hp}) = \mathcal{Q}_{hp} \big( \sigma_y \, \vert \mymatrix{q}_{hp}\vert_F \big)$.
\end{proof}

With $\mathcal{P}_{hp}: Q \rightarrow Q_{hp}$ denoting the standard $L^2$-projection operator the equivalence of \eqref{eq:discrete_variq_second_kind} and  \eqref{eq:discrete_mixed_variationalF} with $\Lambda_{hp}^{(i)}=\Lambda_{hp}^{(w)}$ is stated in the following theorem.

\begin{theorem}\label{cor:equivalence_discreteCase}
If the pair $(\myvec{u}_{hp},\mymatrix{p}_{hp})\in V_{hp}\times Q_{hp}$ solves \eqref{eq:discrete_variq_second_kind}, then the triple $(\myvec{u}_{hp},\mymatrix{p}_{hp},\mymatrix{\lambda}_{hp})$ with 
\begin{align}\label{eq:repres_lagrangeM}
 \mymatrix{\lambda}_{hp} = \mathcal{P}_{hp}\left(\operatorname{dev}(\mymatrix{\sigma}(\myvec{u}_{hp},\mymatrix{p}_{hp}) - \myspace{H} \mymatrix{p}_{hp}) \right)
\end{align}
is a solution of \eqref{eq:discrete_mixed_variationalF} with $\Lambda_{hp}^{(i)}=\Lambda_{hp}^{(w)}$. Conversely, if $(\myvec{u}_{hp},\mymatrix{p}_{hp},\mymatrix{\lambda}_{hp})\in V_{hp}\times Q_{hp}\times \Lambda_{hp}^{(w)}$ solves \eqref{eq:discrete_mixed_variationalF} with $\Lambda_{hp}^{(i)}=\Lambda_{hp}^{(w)}$, then $(\myvec{u}_{hp},\mymatrix{p}_{hp})$ solves \eqref{eq:discrete_variq_second_kind} and there holds \eqref{eq:repres_lagrangeM}.
\end{theorem}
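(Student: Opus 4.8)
The plan is to establish the equivalence by translating both discrete problems into the decoupled nodal form afforded by the biorthogonal basis, and then to show that the resulting pointwise conditions are mutually equivalent. I would begin with the forward direction: assume $(\myvec{u}_{hp},\mymatrix{p}_{hp})$ solves the discrete variational inequality \eqref{eq:discrete_variq_second_kind}, define $\mymatrix{\lambda}_{hp}$ by \eqref{eq:repres_lagrangeM}, and verify the two conditions in \eqref{eq:discrete_mixed_variationalF}. The equation \eqref{eq:discrete_mixed_variationalF_01} should follow almost directly: testing \eqref{eq:discrete_variq_second_kind} with $\mymatrix{q}_{hp}=\mymatrix{p}_{hp}$ and $\myvec{v}_{hp}=\myvec{u}_{hp}\pm\myvec{w}_{hp}$ isolates the displacement block, while testing with $\myvec{v}_{hp}=\myvec{u}_{hp}$ and perturbing $\mymatrix{p}_{hp}$ in $Q_{hp}$ gives the plastic-strain block up to the subdifferential of $\psi_{hp}$; the definition \eqref{eq:repres_lagrangeM} together with the identity \eqref{eq:identity_deviatoricPart} (so that the deviatoric projection is invisible when paired against $\mymatrix{q}_{hp}\in Q_{hp}$) and the fact that $\mathcal{P}_{hp}$ is the $L^2$-projection onto $Q_{hp}$ should make $(\mymatrix{\lambda}_{hp},\mymatrix{q}_{hp})_{0,\Omega}$ exactly the term needed.

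**Reducing the variational inequality to nodal conditions.** The key technical step is to show that $\mymatrix{\lambda}_{hp}\in\Lambda_{hp}^{(w)}$ and that \eqref{eq:discrete_mixed_variationalF_02} holds, and conversely that these imply the inequality \eqref{eq:discrete_variq_second_kind}. Here I would exploit Lemma~\ref{lem:representation_Qhp_qhp}: writing $\mymatrix{p}_{hp}=\sum_i\mymatrix{p}_i\,\phi_i$ and $\mymatrix{\lambda}_{hp}=\sum_i\mymatrix{\lambda}_i\,\varphi_i$, the biorthogonality $(\phi_i,\varphi_j)_{0,\Omega}=\delta_{ij}D_i$ gives $(\mymatrix{\lambda}_{hp},\mymatrix{q}_{hp})_{0,\Omega}=\sum_i D_i\,\mymatrix{\lambda}_i:\mymatrix{q}_i$ for any $\mymatrix{q}_{hp}=\sum_i\mymatrix{q}_i\phi_i$, while $\psi_{hp}(\mymatrix{q}_{hp})=\sum_i D_i\sigma_i\,|\mymatrix{q}_i|_F$ by the lemma and \eqref{eq:defi_sigmai}. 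Thus the plastic-strain component of the variational inequality \eqref{eq:discrete_variq_second_kind} decouples into independent scalar (matrix) inequalities: for each $i$, $\mymatrix{\lambda}_i:(\mymatrix{q}_i-\mymatrix{p}_i)+\sigma_i|\mymatrix{q}_i|_F-\sigma_i|\mymatrix{p}_i|_F\ge 0$ for all $\mymatrix{q}_i\in\myspace{S}_{d,0}$. This is the statement that $-\mymatrix{\lambda}_i\in\partial(\sigma_i|\cdot|_F)(\mymatrix{p}_i)$, which by the standard convex-analysis characterization of the subdifferential of a norm is equivalent to the pair of conditions $|\mymatrix{\lambda}_i|_F\le\sigma_i$ and $\mymatrix{\lambda}_i:\mymatrix{p}_i=\sigma_i|\mymatrix{p}_i|_F$ — precisely \eqref{eq:characterization_LagrangeMult} (i.e.\ $\mymatrix{\lambda}_{hp}\in\Lambda_{hp}^{(w)}$) together with \eqref{eq:char_IQconstraint} (i.e.\ the complementarity \eqref{eq:discrete_mixed_variationalF_02}). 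Running this equivalence in both directions closes both implications, once one checks that the $\mymatrix{\lambda}_{hp}$ produced from \eqref{eq:discrete_mixed_variationalF_01} is automatically the $L^2$-projection \eqref{eq:repres_lagrangeM}, which again follows because $(\mymatrix{\lambda}_{hp},\mymatrix{q}_{hp})_{0,\Omega}=\ell(\myvec{o},\mymatrix{q}_{hp})-a((\myvec{u}_{hp},\mymatrix{p}_{hp}),(\myvec{o},\mymatrix{q}_{hp}))=-(\mymatrix{\sigma}(\myvec{u}_{hp},\mymatrix{p}_{hp}),-\mymatrix{q}_{hp})_{0,\Omega}-(\myspace{H}\mymatrix{p}_{hp},\mymatrix{q}_{hp})_{0,\Omega}=(\operatorname{dev}(\mymatrix{\sigma}(\myvec{u}_{hp},\mymatrix{p}_{hp})-\myspace{H}\mymatrix{p}_{hp}),\mymatrix{q}_{hp})_{0,\Omega}$ for all $\mymatrix{q}_{hp}\in Q_{hp}$, using \eqref{eq:identity_deviatoricPart}.

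**Main obstacle.** I expect the only genuinely delicate point is the passage between the decoupled variational inequality $-\mymatrix{\lambda}_i\in\partial(\sigma_i|\cdot|_F)(\mymatrix{p}_i)$ and the pair $\{|\mymatrix{\lambda}_i|_F\le\sigma_i,\ \mymatrix{\lambda}_i:\mymatrix{p}_i=\sigma_i|\mymatrix{p}_i|_F\}$ — in particular the case $\mymatrix{p}_i=\myvec{o}$, where the subdifferential is the whole ball and the complementarity condition is vacuous, must be handled separately but poses no real difficulty (Cauchy–Schwarz for the Frobenius inner product gives one direction, choosing $\mymatrix{q}_i=t\mymatrix{p}_i$ and $\mymatrix{q}_i$ an arbitrary direction gives the other). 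The remaining steps — isolating the displacement equation, invoking biorthogonality, and identifying $\mymatrix{\lambda}_{hp}$ with the projected deviatoric stress — are bookkeeping once Lemma~\ref{lem:representation_Qhp_qhp} and the representation \eqref{eq:characterization_LagrangeMult} are in hand. Note that, as emphasized in the statement, none of this uses \eqref{eq:requirement_detF}: the biorthogonal functions $\varphi_i$ need not coincide with $\phi_i$, but only their biorthogonality and the identity of Lemma~\ref{lem:representation_Qhp_qhp} (which does rely on exactness of Gauss quadrature for the integrand $\sigma_y|\mymatrix{q}_{hp}|_F$, valid since $|\mymatrix{q}_{hp}|_F$ times $|\det\nabla\myvec{F}_T|$ is handled through the nodal evaluation, not through polynomial exactness of $|\mymatrix{q}_{hp}|_F$ itself) are invoked.
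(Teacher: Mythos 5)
Your argument is correct, but it is organized differently from the paper's proof, and the comparison is worth recording. The paper also uses the biorthogonal machinery, but only in the direction ``mixed $\Rightarrow$ inequality'', and there only to extract the single identity $(\mymatrix{\lambda}_{hp},\mymatrix{p}_{hp})_{0,\Omega}=\psi_{hp}(\mymatrix{p}_{hp})$ from \eqref{eq:char_IQconstraint}, biorthogonality, \eqref{eq:defi_sigmai} and Lemma~\ref{lem:representation_Qhp_qhp}; membership in $\Lambda_{hp}^{(w)}$ is then exploited directly through its weak definition, with no nodal decoupling of the constraint. In the direction ``inequality $\Rightarrow$ mixed'' the paper avoids the decoupling altogether: it derives $(\mymatrix{\lambda}_{hp},\mymatrix{q}_{hp}-\mymatrix{p}_{hp})_{0,\Omega}\leq\psi_{hp}(\mymatrix{q}_{hp})-\psi_{hp}(\mymatrix{p}_{hp})$ from \eqref{eq:construction_lambda_hr} and the VI, then tests with $\mymatrix{q}_{hp}=\mymatrix{0}$ and $\mymatrix{q}_{hp}=2\,\mymatrix{p}_{hp}$ to obtain $(\mymatrix{\lambda}_{hp},\mymatrix{p}_{hp})_{0,\Omega}=\psi_{hp}(\mymatrix{p}_{hp})$, and thus membership and \eqref{eq:discrete_mixed_variationalF_02} without ever invoking \eqref{eq:characterization_LagrangeMult} or \eqref{eq:char_IQconstraint} in that direction. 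Your fully nodal route — decoupling both the constraint and the complementarity via $(\mymatrix{\lambda}_{hp},\mymatrix{q}_{hp})_{0,\Omega}=\sum_i D_i\,\mymatrix{\lambda}_i:\mymatrix{q}_i$ and the subdifferential characterization of $\sigma_i\vert\cdot\vert_F$ at each node — is equally valid (the $D_i>0$ and the independence of the coefficients $\mymatrix{q}_i$ make the passage between the summed and the nodal inequalities legitimate), but it leans on the prior equivalences \eqref{eq:characterization_LagrangeMult} and \eqref{eq:char_IQconstraint} in both directions, whereas the paper's $0$/$2\mymatrix{p}_{hp}$ trick is more elementary and self-contained; conversely, your version makes the convex-analytic structure ($\mymatrix{\lambda}_i\in\partial(\sigma_i\vert\cdot\vert_F)(\mymatrix{p}_i)$ nodewise) completely explicit. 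The identification of $\mymatrix{\lambda}_{hp}$ with the $L^2$-projection of the deviatoric stress is the same computation in both proofs.

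Two small slips to fix, neither a gap. First, a sign: with $\mymatrix{\lambda}_{hp}$ defined by \eqref{eq:repres_lagrangeM}, the decoupled plastic block of \eqref{eq:discrete_variq_second_kind} reads $\mymatrix{\lambda}_i:(\mymatrix{q}_i-\mymatrix{p}_i)\leq\sigma_i\big(\vert\mymatrix{q}_i\vert_F-\vert\mymatrix{p}_i\vert_F\big)$ for all $\mymatrix{q}_i\in\myspace{S}_{d,0}$, i.e.\ $\mymatrix{\lambda}_i\in\partial\big(\sigma_i\vert\cdot\vert_F\big)(\mymatrix{p}_i)$ (not $-\mymatrix{\lambda}_i$); as you stated it, the inclusion $-\mymatrix{\lambda}_i\in\partial(\sigma_i\vert\cdot\vert_F)(\mymatrix{p}_i)$ would be equivalent to $-\mymatrix{\lambda}_i:\mymatrix{p}_i=\sigma_i\vert\mymatrix{p}_i\vert_F$, which contradicts the pair of conditions you correctly arrive at; with the corrected sign everything is consistent with \eqref{eq:characterization_LagrangeMult} and \eqref{eq:char_IQconstraint}. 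Second, $\ell$ acts on the displacement only, so write $\ell(\myvec{o})=0$ rather than $\ell(\myvec{o},\mymatrix{q}_{hp})$ in the projection computation; the chain itself is correct and reproduces \eqref{eq:proof_lambda_h_proj_of_dev}.
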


\begin{proof}
Let $(\myvec{u}_{hp},\mymatrix{p}_{hp},\mymatrix{\lambda}_{hp})\in  V_{hp}\times Q_{hp}\times \Lambda_{hp}^{(w)}$ be a solution of \eqref{eq:discrete_mixed_variationalF}. Using \eqref{eq:characterization_LagrangeMult} and \eqref{eq:representation_Qhp} we represent $\mymatrix{\lambda}_{hp}\in\Lambda_{hp}^{(w)}$ and $\mymatrix{p}_{hp}\in Q_{hp}$ by
\begin{align*}
 \mymatrix{\lambda}_{hp}=\sum_{i=1}^N \mymatrix{\lambda}_i \, \varphi_i, \quad
 \mymatrix{p}_{hp} = \sum_{i=1}^N \mymatrix{p}_i \, \phi_i.
\end{align*}
Thus, the identity \eqref{eq:char_IQconstraint}, the biorthogonality of the basis functions, the definition of $\sigma_i$, cf.~\eqref{eq:defi_sigmai}, and Lemma~\ref{lem:representation_Qhp_qhp} yield
\begin{align*}
 \left(\mymatrix{\lambda}_{hp},\mymatrix{p}_{hp} \right)_{0,\Omega} 
 = \sum_{i,j=1}^N \mymatrix{\lambda}_i : \mymatrix{p}_j \, (\varphi_i, \phi_j)_{0,\Omega}
 = \sum_{i=1}^N \mymatrix{\lambda}_i : \mymatrix{p}_i \, D_i 
 = \sum_{i=1}^N \sigma_i \, \vert \mymatrix{p}_i \vert_F \, D_i 
 = \sum_{i=1}^N \vert \mymatrix{p}_i \vert_F  \, (\sigma_y,\phi_i)_{0,T}
 = \psi_{hp}(\mymatrix{p}_{hp}).
\end{align*}
As $\mymatrix{\lambda}_{hp} \in \Lambda_{hp}^{ (w)}$ we have $(\mymatrix{\lambda}_{hp},\mymatrix{q}_{hp})_{0,\Omega} \leq \psi_{hp}(\mymatrix{q}_{hp})$ for any $\mymatrix{q}_{hp}\in Q_{hp}$. Hence,
\begin{align*}
 (\mymatrix{\lambda}_{hp},\mymatrix{q}_{hp} - \mymatrix{p}_{hp})_{0,\Omega}
 = (\mymatrix{\lambda}_{hp},\mymatrix{q}_{hp})_{0,\Omega} - \psi_{hp}(\mymatrix{p}_{hp})
 \leq \psi_{hp}(\mymatrix{q}_{hp}) - \psi_{hp}(\mymatrix{p}_{hp}).
\end{align*}
Thereby, choosing $(\myvec{v}_{hp}-\myvec{u}_{hp}, \mymatrix{q}_{hp}-\mymatrix{p}_{hp})$ as test function in \eqref{eq:discrete_mixed_variationalF_01} with arbitrary $(\myvec{v}_{hp}, \mymatrix{q}_{hp}) \in V_{hp} \times Q_{hp}$ yields \eqref{eq:discrete_variq_second_kind}. To show \eqref{eq:repres_lagrangeM}, we choose $\myvec{v}_{hp} = \myvec{0}$ in \eqref{eq:discrete_mixed_variationalF_01} to obtain
\begin{align}\label{eq:construction_lambda_hr}
 ( \mymatrix{\lambda}_{hp},\mymatrix{q}_{hp} )_{0,\Omega} 
 = -a\big( (\myvec{u}_{hp},\mymatrix{p}_{hp}),(\myvec{0},\mymatrix{q}_{hp}) \big) \qquad
 \forall \, \mymatrix{q}_{hp}\in Q_{hp},
\end{align}
from which we deduce
\begin{align}\label{eq:proof_lambda_h_proj_of_dev}
 ( \mymatrix{\lambda}_{hp},\mymatrix{q}_{hp} )_{0,\Omega}
 = \big( \operatorname{dev} \big( \mymatrix{\sigma}(\myvec{u}_{hp},\mymatrix{p}_{hp}) - \myspace{H} \, \mymatrix{p}_{hp} \big), \mymatrix{q}_{hp} \big)_{0,\Omega} \qquad
 \forall \, \mymatrix{q}_{hp}\in Q_{hp}
\end{align}
by using the definition \eqref{eq:defi_bilinearF_a} of the bilinearform $a(\cdot,\cdot)$ and the identity \eqref{eq:identity_deviatoricPart}. Indeed, this gives
\begin{align*}
 \mymatrix{\lambda}_{hp} = \mathcal{P}_{hp}\left(\operatorname{dev}(\mymatrix{\sigma}(\myvec{u}_{hp},\mymatrix{p}_{hp}) - \myspace{H} \mymatrix{p}_{hp}) \right).
\end{align*}

Let $(\myvec{u}_{hp},\mymatrix{p}_{hp})$ be a solution of \eqref{eq:discrete_variq_second_kind} and define $\mymatrix{\lambda}_{hp} := \mathcal{P}_{hp}\left(\operatorname{dev}(\mymatrix{\sigma}(\myvec{u}_{hp},\mymatrix{p}_{hp}) - \myspace{H} \mymatrix{p}_{hp}) \right)$. By using \eqref{eq:identity_deviatoricPart} and the definition \eqref{eq:defi_bilinearF_a} of $a(\cdot,\cdot)$ we obtain \eqref{eq:construction_lambda_hr}. Choosing \eqref{eq:discrete_variq_second_kind} with the test functions $(\myvec{u}_{hp} \pm \myvec{v}_{hp}, \mymatrix{p}_{hp})$ for arbitrary $\myvec{v}_{hp}\in V_{hp}$ leads to the equation
\begin{align*}
 a\big( (\myvec{u}_{hp},\myvec{v}_{hp}), (\myvec{v}_{hp},\mymatrix{0}) \big) = \ell(\myvec{v}_{hp}) \qquad
 \forall \, \myvec{v}_{hp}\in V_{hp}.
\end{align*}
Furthermore, by using \eqref{eq:construction_lambda_hr} we find that 
\begin{align*}
 a\big( (\myvec{u}_{hp},\myvec{v}_{hp}), (\myvec{v}_{hp},\mymatrix{q}_{hp}) \big) + ( \mymatrix{\lambda}_{hp},\mymatrix{q}_{hp} )_{0,\Omega} 
 = a\big( (\myvec{u}_{hp},\myvec{v}_{hp}), (\myvec{v}_{hp},\mymatrix{0}) \big) \qquad
 \forall \, \mymatrix{q}_{hp}\in Q_{hp},
\end{align*}
which gives \eqref{eq:discrete_mixed_variationalF_01}. Choosing $\mymatrix{q}_{hp}-\mymatrix{p}_{hp}\in Q_{hp}$ and $(\myvec{u}_{hp},\mymatrix{q}_{hp})\in V_{hp}\times Q_{hp}$ as test function in \eqref{eq:construction_lambda_hr} and \eqref{eq:discrete_variq_second_kind}, respectively, implies that
\begin{align}\label{eq:proofThm8_01}
 (\mymatrix{\lambda}_{hp},\mymatrix{q}_{hp}-\mymatrix{p}_{hp})_{0,\Omega}
 = -a\big( (\myvec{u}_{hp},\mymatrix{p}_{hp}),(\myvec{0},\mymatrix{q}_{hp}-\mymatrix{p}_{hp}) \big)
 \leq \psi_{hp}(\mymatrix{q}_{hp}) - \psi_{hp}(\mymatrix{p}_{hp}) \qquad 
 \forall \, \mymatrix{q}_{hp} \in Q_{hp}.
\end{align}
In particular, inserting $\mymatrix{q}_{hp} = \mymatrix{0}$ and $\mymatrix{q}_{hp} = 2 \, \mymatrix{p}_{hp}$ gives
\begin{align}\label{eq:proofThm8_02}
 ( \mymatrix{\lambda}_{hp},\mymatrix{p}_{hp} )_{0,\Omega} = \psi_{hp}(\mymatrix{p}_{hp}),
\end{align}
which turns \eqref{eq:proofThm8_01} into 
\begin{align}\label{eq:proofThm8_03}
 (\mymatrix{\lambda}_{hp},\mymatrix{q}_{hp})_{0,\Omega} \leq \psi_{hp}(\mymatrix{q}_{hp}) \qquad 
 \forall \, \mymatrix{q}_{hp} \in Q_{hp}.
\end{align}
Thus, it holds $\mymatrix{\lambda}_{hp} \in \Lambda_{hp}^{(w)}$. Finally, from \eqref{eq:proofThm8_02} and \eqref{eq:proofThm8_03} we deduce
\begin{align*}
 \left(\mymatrix{\mu}_{hp}-\mymatrix{\lambda}_{hp},\mymatrix{p}_{hp} \right)_{0,\Omega}
 \leq \psi_{hp}(\mymatrix{p}_{hp})-\left(\mymatrix{\lambda}_{hp},\mymatrix{p}_{hp} \right)_{0,\Omega}
 = 0 \qquad
 \forall \, \mymatrix{\mu}_{hp}\in\Lambda_{hp}^{(w)},
\end{align*}
which is \eqref{eq:discrete_mixed_variationalF_02}.
\end{proof}

\subsection{Existence of discrete solutions}

The existence of a unique solution $(\myvec{u}_{hp},\mymatrix{p}_{hp},\mymatrix{\lambda}_{hp})$ of \eqref{eq:discrete_mixed_variationalF} with the specific choice $\Lambda_{hp}^{(i)}=\Lambda_{hp}^{(s)}$ is already stated in \cite{Bammer2023Apriori}. The existence of a solution $(\myvec{u}_{hp},\mymatrix{p}_{hp},\mymatrix{\lambda}_{hp})$ of \eqref{eq:discrete_mixed_variationalF} with $\Lambda_{hp}^{(i)}=\Lambda_{hp}^{(w)}$ can be deduced from the existence of a solution $(\myvec{u}_{hp},\mymatrix{p}_{hp})\in V_{hp}\times Q_{hp}$ of \eqref{eq:discrete_variq_second_kind} due to their equivalence as stated in Theorem~\ref{cor:equivalence_discreteCase}. Note that the uniqueness of the Lagrange multiplier $\mymatrix{\lambda}_{hp}\in\Lambda_{hp}^{(w)}$ directly follows from the discrete inf-sup condition 
\begin{align}\label{eq:discr_infSup}
 \sup_{\substack{(\myvec{v}_{hp},\mymatrix{q}_{hp})\in V_{hp}\times Q_{hp} \\ \Vert (\myvec{v}_{hp},\mymatrix{q}_{hp})\Vert \neq 0}} \frac{(\mymatrix{\mu}_{hp},\mymatrix{q}_{hp})_{0,\Omega}}{\Vert(\myvec{v}_{hp},\mymatrix{q}_{hp})\Vert}
 = \Vert \mymatrix{\mu}_{hp}\Vert_{0,\Omega} \qquad 
 \forall\, \mymatrix{\mu}_{hp}\in \Lambda_{hp}^{(i)}
\end{align}
for $i\in\lbrace s,w\rbrace$. A proof of \eqref{eq:discr_infSup} can be found in \cite{Bammer2023Apriori}. To show the unique existence of a solution $(\myvec{u}_{hp},\mymatrix{p}_{hp})\in V_{hp}\times Q_{hp}$ of \eqref{eq:discrete_variq_second_kind} let us first state the subdifferentiability of the discrete plasticity functional $\psi_{hp}(\cdot)$.

\begin{lemma}\label{lem:subdiff_psihp}
The approximative plasticity functional $\psi_{hp}(\cdot)$ is subdifferentiable on $Q_{hp}$, i.e.~for any $\mymatrix{\mu}_{hp}\in Q_{hp}$ there exists an element $\partial\psi_{hp}(\mymatrix{\mu}_{hp})$ in the dual space $Q_{hp}^{\ast}$ of $Q_{hp}$ such that
\begin{align*}
 \psi_{hp}(\mymatrix{q}_{hp}) \geq \psi_{hp}(\mymatrix{\mu}_{hp}) + \big\langle \partial\psi_{hp}(\mymatrix{\mu}_{hp}), \mymatrix{q}_{hp} - \mymatrix{\mu}_{hp} \big\rangle \qquad
 \forall \, \mymatrix{q}_{hp}\in Q_{hp}.
\end{align*}
\end{lemma}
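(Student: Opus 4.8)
The plan is to exploit the explicit representation of $\psi_{hp}(\cdot)$ provided by Lemma~\ref{lem:representation_Qhp_qhp}, namely
\begin{align*}
 \psi_{hp}(\mymatrix{q}_{hp}) = \sum_{i=1}^N |\mymatrix{q}_i|_F \, (\sigma_y,\phi_i)_{0,\Omega}
\end{align*}
for $\mymatrix{q}_{hp}=\sum_{i=1}^N \mymatrix{q}_i\,\phi_i$. Since $D_i := (\phi_i,1)_{0,\Omega} > 0$ and $\sigma_i := D_i^{-1}(\sigma_y,\phi_i)_{0,\Omega} = \sigma_y > 0$, this reads $\psi_{hp}(\mymatrix{q}_{hp}) = \sum_{i=1}^N \sigma_i\,D_i\,|\mymatrix{q}_i|_F$, i.e.~a finite positive combination of the (convex, positively homogeneous) Frobenius norms of the coefficient matrices $\mymatrix{q}_i\in\myspace{S}_{d,0}$. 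The key observation is that the map $\mymatrix{q}_{hp}\mapsto(\mymatrix{q}_1,\dots,\mymatrix{q}_N)$ is a linear isomorphism from $Q_{hp}$ onto $(\myspace{S}_{d,0})^N$, so $\psi_{hp}(\cdot)$ is, up to this isomorphism, just a weighted sum of norms on a finite-dimensional space — in particular it is convex, continuous, and finite everywhere on $Q_{hp}$.

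The first step is therefore to record convexity and continuity of $\psi_{hp}(\cdot)$: convexity follows termwise from convexity of $\mymatrix{q}_i\mapsto|\mymatrix{q}_i|_F$ and positivity of the weights $(\sigma_y,\phi_i)_{0,\Omega}$, and continuity is automatic for a finite-valued convex function on a finite-dimensional space (equivalently, it follows directly from the continuity of each norm). The second step is the standard fact that a convex function which is finite and continuous at a point of a (finite-dimensional, or more generally normed) vector space has a nonempty subdifferential there; since $\psi_{hp}(\cdot)$ is finite and continuous on all of $Q_{hp}$, for every $\mymatrix{\mu}_{hp}\in Q_{hp}$ there exists $\partial\psi_{hp}(\mymatrix{\mu}_{hp})\in Q_{hp}^{\ast}$ with
\begin{align*}
 \psi_{hp}(\mymatrix{q}_{hp}) \geq \psi_{hp}(\mymatrix{\mu}_{hp}) + \big\langle \partial\psi_{hp}(\mymatrix{\mu}_{hp}),\,\mymatrix{q}_{hp}-\mymatrix{\mu}_{hp} \big\rangle \qquad \forall\,\mymatrix{q}_{hp}\in Q_{hp}.
\end{align*}
Alternatively — and this makes the statement fully constructive — one can simply exhibit a subgradient componentwise: writing $\mymatrix{\mu}_{hp}=\sum_i\mymatrix{\mu}_i\,\phi_i$, pick for each $i$ a matrix $\mymatrix{g}_i\in\myspace{S}_{d,0}$ with $|\mymatrix{g}_i|_F\le 1$ and $\mymatrix{g}_i:\mymatrix{\mu}_i=|\mymatrix{\mu}_i|_F$ (take $\mymatrix{g}_i=\mymatrix{\mu}_i/|\mymatrix{\mu}_i|_F$ if $\mymatrix{\mu}_i\neq\mymatrix{0}$ and $\mymatrix{g}_i=\mymatrix{0}$ otherwise), and define the functional $\mymatrix{q}_{hp}=\sum_i\mymatrix{q}_i\,\phi_i\mapsto\sum_i (\sigma_y,\phi_i)_{0,\Omega}\,(\mymatrix{g}_i:\mymatrix{q}_i)$; the Cauchy–Schwarz inequality $|\mymatrix{g}_i|_F|\mymatrix{q}_i|_F\ge \mymatrix{g}_i:\mymatrix{q}_i$ together with $\mymatrix{g}_i:\mymatrix{\mu}_i=|\mymatrix{\mu}_i|_F$ gives the subgradient inequality term by term.

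There is no real obstacle here; the only point requiring a little care is purely bookkeeping — making sure one works in the correct dual space. The subgradient produced componentwise is a priori only a linear functional on $Q_{hp}$, i.e.~an element of $Q_{hp}^{\ast}$, which is exactly what the statement asks for; one does \emph{not} need a representation in $Q$ itself, and indeed the natural representative would involve the biorthogonal basis $\{\varphi_i\}$ rather than $\{\phi_i\}$ when \eqref{eq:requirement_detF} fails. Accordingly I would phrase the proof via Lemma~\ref{lem:representation_Qhp_qhp} plus the abstract convex-analysis fact (convex $+$ finite $+$ continuous $\Rightarrow$ subdifferentiable), relegating the explicit componentwise construction to a remark or a one-line alternative, since it also pins down $\partial\psi_{hp}(\mymatrix{0})$ and will be convenient later when analysing the discrete variational inequality \eqref{eq:discrete_variq_second_kind}.
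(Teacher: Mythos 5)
Your proposal is correct, and its ``one-line alternative'' is in fact the paper's own proof: the paper writes $\mymatrix{\mu}_{hp}=\sum_i\mymatrix{\mu}_i\,\phi_i$, uses the Cauchy--Schwarz inequality componentwise to get $|\mymatrix{q}_i|_F-|\mymatrix{\mu}_i|_F\geq |\mymatrix{\mu}_i|_F^{-1}\,\mymatrix{\mu}_i:(\mymatrix{q}_i-\mymatrix{\mu}_i)$ for the indices with $\mymatrix{\mu}_i\neq\mymatrix{0}$ (dropping the remaining nonnegative terms, i.e.~exactly your choice $\mymatrix{g}_i=\mymatrix{\mu}_i/|\mymatrix{\mu}_i|_F$ or $\mymatrix{0}$), multiplies by $\sigma_i D_i=(\sigma_y,\phi_i)_{0,\Omega}>0$ and sums, invoking Lemma~\ref{lem:representation_Qhp_qhp}. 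The only cosmetic difference is the bookkeeping you flag: rather than leaving the subgradient as an abstract element of $Q_{hp}^{\ast}$, the paper realizes it concretely as $\widetilde{\mymatrix{q}}:=\sum_{i\in I}\frac{\sigma_i}{|\mymatrix{\mu}_i|_F}\mymatrix{\mu}_i\,\varphi_i\in Q_{hp}$ in the biorthogonal basis, so that the pairing is the $L^2$ inner product via the formula $(\mymatrix{\nu}_{hp},\mymatrix{q}_{hp})_{0,\Omega}=\sum_i\mymatrix{\nu}_i:\mymatrix{q}_i\,D_i$, and then takes the Riesz representator --- precisely the point you anticipate when you say the natural representative involves $\{\varphi_i\}$ when \eqref{eq:requirement_detF} fails. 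Your primary route (finite-dimensional, finite-valued convex function, hence continuous, hence subdifferentiable everywhere) is a genuinely different and perfectly adequate shortcut, since Theorem~\ref{thm:discreteExistence} only needs existence of a subgradient; what the paper's explicit construction buys is an $L^2$-representation of $\partial\psi_{hp}$ consistent with the biorthogonal machinery used elsewhere, whereas the abstract argument is shorter but yields no formula. Either version is acceptable; if you keep the abstract one as the main proof, state the standard convex-analysis fact with a citation (e.g.~Ekeland--T\'{e}mam) rather than leaving it implicit.
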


\begin{proof}
Representing $\mymatrix{\nu}_{hp},\mymatrix{q}_{hp}\in Q_{hp}$ as
\begin{align*}
 \mymatrix{\nu}_{hp} = \sum_{i=1}^N \mymatrix{\nu}_i \, \varphi_i, \quad
 \mymatrix{q}_{hp} = \sum_{i=1}^N \mymatrix{q}_i \, \phi_i,
\end{align*}
cf.~\eqref{eq:representation_Qhp}, we conclude from the biorthgonality of the basis functions that
\begin{align}\label{eq:formula_L2product}
 (\mymatrix{\nu}_{hp},\mymatrix{q}_{hp})_{0,\Omega} 
 = \sum_{i,j=1}^N \mymatrix{\nu}_i : \mymatrix{q}_i \, (\varphi_i, \phi_j)_{0,\Omega}
 = \sum_{i,j=1}^N \mymatrix{\nu}_i : \mymatrix{q}_i \, D_i.
\end{align}
Let $\mymatrix{\mu}_{hp}\in Q_{hp}$ with $\mymatrix{\mu}_{hp} = \sum_{i=1}^N \mymatrix{\mu}_i \, \phi_i$. For $i\in I := \lbrace 1\leq i\leq N \; ; \; \mymatrix{\mu}_i \neq \mymatrix{0} \rbrace$, the Cauchy-Schwarz inequality yields $|\mymatrix{\mu}_i|_F^{-1} \, \mymatrix{\mu}_i \leq |\mymatrix{q}_i|_F$ from which we deduce
\begin{align}\label{eq:proof_subDiff_psihp}
 |\mymatrix{q}_i|_F - |\mymatrix{\mu}_i|_F
 \geq \frac{1}{\; |\mymatrix{\mu}_i|_F} \, \mymatrix{\mu}_i : (\mymatrix{q}_i - \mymatrix{\mu}_i). 
\end{align}
Since $(\sigma_y, \phi_i)_{0,\Omega} = \sigma_i \, D_i$ we obtain from Lemma~\ref{lem:representation_Qhp_qhp} and \eqref{eq:proof_subDiff_psihp} that
\begin{align*}
 \psi_{hp}(\mymatrix{q}_{hp}) - \psi_{hp}(\mymatrix{\mu}_{hp}) 
 = \sum_{i=1}^N \sigma_i \, D_i \, \big( |\mymatrix{q}_i|_F - |\mymatrix{\mu}_i|_F \big)
 \geq \sum_{i\in I} \frac{\sigma_i}{\; |\mymatrix{\mu}_i|_F}  \, \mymatrix{\mu}_i : (\mymatrix{q}_i - \mymatrix{\mu}_i) \, D_i
 = ( \widetilde{\mymatrix{q}}, \mymatrix{q}_{hp} - \mymatrix{\mu}_{hp} )_{0,\Omega},
\end{align*}
where the last identity follows from \eqref{eq:formula_L2product} and $\widetilde{\mymatrix{q}}\in Q_{hp}$ defined as
\begin{align*}
 \widetilde{\mymatrix{q}} := \sum_{i\in I} \frac{\sigma_i}{\; |\mymatrix{\mu}_i|_F}  \, \mymatrix{\mu}_i \, \varphi_i.
\end{align*}
Thus, the subdifferential $\partial\psi_{hp}(\mymatrix{\mu}_{hp})$ is given by the Riesz representator of $\widetilde{\mymatrix{q}}$.
\end{proof}

\begin{theorem} \label{thm:discreteExistence}
The discrete variational inequality \eqref{eq:discrete_variq_second_kind} has a unique solution $(\myvec{u}_{hp},\mymatrix{p}_{hp})\in V_{hp}\times Q_{hp}$.
\end{theorem}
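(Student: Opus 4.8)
The plan is to apply the standard existence theory for variational inequalities of the second kind on the finite-dimensional Hilbert space $V_{hp}\times Q_{hp}$. The bilinear form $a(\cdot,\cdot)$ is, by the remarks following \eqref{eq:defi_bilinearF_a}, continuous and $(V\times Q)$-elliptic, hence a fortiori continuous and elliptic on the closed subspace $V_{hp}\times Q_{hp}$ with the same constants. The functional $\ell(\cdot)$ is continuous and linear. The only nonstandard ingredient is the discrete plasticity functional $\psi_{hp}(\cdot)$, and here I would verify three properties: it is nonnegative, convex, and continuous on $Q_{hp}$. Nonnegativity is clear from Lemma~\ref{lem:representation_Qhp_qhp}, since each $(\sigma_y,\phi_i)_{0,\Omega}=\sigma_i D_i>0$ and each $|\mymatrix{q}_i|_F\geq 0$. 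Convexity follows because $\psi_{hp}(\mymatrix{q}_{hp})=\sum_{i=1}^N |\mymatrix{q}_i|_F\,(\sigma_y,\phi_i)_{0,\Omega}$ is a nonnegative linear combination of the convex maps $\mymatrix{q}_{hp}\mapsto|\mymatrix{q}_i|_F$ (each of which is a seminorm composed with the linear coordinate extraction $\mymatrix{q}_{hp}\mapsto\mymatrix{q}_i$). Continuity is immediate from the same representation, each coordinate map being continuous on the finite-dimensional space. Alternatively, one may simply cite Lemma~\ref{lem:subdiff_psihp}, which already establishes that $\psi_{hp}(\cdot)$ is subdifferentiable (hence in particular proper, convex and lower semicontinuous) on $Q_{hp}$.

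Granting these properties, existence and uniqueness follow from a classical theorem on elliptic variational inequalities of the second kind (see e.g.~\cite{Han2013}): if $a(\cdot,\cdot)$ is a continuous, coercive bilinear form on a Hilbert space $W$, $\ell\in W^*$, and $j:W\to\myspace{R}\cup\{+\infty\}$ is proper, convex and lower semicontinuous, then the problem of finding $w\in W$ with $a(w,v-w)+j(v)-j(w)\geq\ell(v-w)$ for all $v\in W$ has a unique solution. Here I take $W=V_{hp}\times Q_{hp}$, the bilinear form $a((\cdot,\cdot),(\cdot,\cdot))$, the right-hand side $(\myvec{v}_{hp},\mymatrix{q}_{hp})\mapsto\ell(\myvec{v}_{hp})$ (which is linear and continuous on $W$), and $j(\myvec{v}_{hp},\mymatrix{q}_{hp}):=\psi_{hp}(\mymatrix{q}_{hp})$ (convex, continuous, hence proper and lower semicontinuous). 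The abstract theorem then yields exactly \eqref{eq:discrete_variq_second_kind} with a unique solution $(\myvec{u}_{hp},\mymatrix{p}_{hp})$.

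The only point that genuinely requires the structure developed in this section — rather than being a verbatim instance of the abstract theorem — is the verification that $\psi_{hp}(\cdot)$ is a well-behaved convex functional, because $\psi_{hp}$ is defined through the quadrature rule $\mathcal{Q}_{hp}(\cdot)$ rather than as an integral, and on elements with $p_T=1$ the rule is only the midpoint rule. This is precisely what Lemma~\ref{lem:representation_Qhp_qhp} resolves: it rewrites $\psi_{hp}$ in the explicit coordinate form $\sum_i|\mymatrix{q}_i|_F\,(\sigma_y,\phi_i)_{0,\Omega}$, from which convexity, continuity and nonnegativity are transparent. Thus I anticipate no real obstacle; the proof is short once Lemma~\ref{lem:representation_Qhp_qhp} (or Lemma~\ref{lem:subdiff_psihp}) is invoked. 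For completeness one could also remark that uniqueness already follows directly from strict convexity of the energy functional $\tfrac12 a((\myvec{v}_{hp},\mymatrix{q}_{hp}),(\myvec{v}_{hp},\mymatrix{q}_{hp}))+\psi_{hp}(\mymatrix{q}_{hp})-\ell(\myvec{v}_{hp})$, which is guaranteed by the ellipticity of $a(\cdot,\cdot)$ and the convexity of $\psi_{hp}(\cdot)$, so that \eqref{eq:discrete_variq_second_kind} is equivalent to the unique minimization problem for this functional over $V_{hp}\times Q_{hp}$.
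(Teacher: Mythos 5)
Your proposal is correct, and it reaches the conclusion by a genuinely different (though closely related) route than the paper. The paper exploits the symmetry of $a(\cdot,\cdot)$ to pass to the equivalent minimization of the energy $\mathcal{E}(\myvec{v}_{hp},\mymatrix{q}_{hp})=\tfrac12\,a\big((\myvec{v}_{hp},\mymatrix{q}_{hp}),(\myvec{v}_{hp},\mymatrix{q}_{hp})\big)+\psi_{hp}(\mymatrix{q}_{hp})-\ell(\myvec{v}_{hp})$, uses the explicitly constructed subdifferential of $\psi_{hp}$ from Lemma~\ref{lem:subdiff_psihp} to show that $\mathcal{E}$ is coercive, convex and subdifferentiable, invokes the existence result of \cite[Ch.~II, Prop.~1.2]{Ekeland_1976} for a minimizer, and then proves uniqueness separately and elementarily by adding the two inequalities for two putative solutions and using the ellipticity of $a(\cdot,\cdot)$. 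You instead verify that $\psi_{hp}$ is proper, convex and continuous on $Q_{hp}$ — which is indeed transparent from the coordinate representation of Lemma~\ref{lem:representation_Qhp_qhp}, since the coefficients $\mymatrix{q}_i$ depend linearly on $\mymatrix{q}_{hp}$ and the weights $(\sigma_y,\phi_i)_{0,\Omega}=\sigma_i D_i$ are positive — and then cite the classical existence-and-uniqueness theorem for elliptic variational inequalities of the second kind on the Hilbert space $V_{hp}\times Q_{hp}$ (as in \cite{Han2013}). Both arguments turn on the same essential point, namely that the quadrature-defined $\psi_{hp}$ is a well-behaved convex functional, and you correctly identify Lemma~\ref{lem:representation_Qhp_qhp} (or Lemma~\ref{lem:subdiff_psihp}) as the ingredient that settles this. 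What each approach buys: your route is shorter, packages existence and uniqueness in one citation, and does not need the symmetry of $a(\cdot,\cdot)$; the paper's route needs symmetry (for the equivalence with minimization) but is more self-contained, relying only on a general convex-minimization result and an elementary ellipticity argument for uniqueness, and it makes the subdifferential of $\psi_{hp}$ explicit. Your closing remark that uniqueness also follows from strict convexity of the energy is fine as well, since $a(\cdot,\cdot)$ is symmetric and elliptic here, which is exactly the equivalence the paper uses.
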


\begin{proof}
We conclude from the symmetry and the ellipticity of the bilinear form $a(\cdot,\cdot)$ as well as the convexity of the discrete plasticity functional $\psi_{hp}(\cdot)$ that the discrete variational inequality is equivalent to a certain minimization problem, i.e.~a pair $(\myvec{u}_{hp}, \mymatrix{p}_{hp})\in V_{hp}\times Q_{hp}$ is a solution to \eqref{eq:discrete_variq_second_kind} if and only if it is a minimizer of $\mathcal{E} : V_{hp}\times Q_{hp}\to \mathbb{R}$, given by
\begin{align*}
 \mathcal{E}(\myvec{v}_{hp},\mymatrix{q}_{hp}) :=
 \frac{1}{2} \, a\big( (\myvec{v}_{hp}, \mymatrix{q}_{hp}), (\myvec{v}_{hp}, \mymatrix{q}_{hp}) \big) + \psi_{hp}(\mymatrix{q}_{hp}) - \ell(\myvec{v}_{hp}),
\end{align*}
see e.g.~\cite{Han2013}. From Lemma~\ref{lem:subdiff_psihp} we deduce for $(\myvec{w}_{hp},\mymatrix{\mu}_{hp})\in V_{hp}\times Q_{hp}$ 
\begin{align*}
 \mathcal{E}(\myvec{v}_{hp}, \mymatrix{q}_{hp}) &- \mathcal{E}(\myvec{w}_{hp}, \mymatrix{\mu}_{hp}) \\
 & \quad \geq \big\langle A(\myvec{w}_{hp}, \mymatrix{\mu}_{hp}), (\myvec{v}_{hp} - \myvec{w}_{hp}, \mymatrix{q}_{hp} - \mymatrix{\mu}_{hp}) \big\rangle + \big\langle \partial\psi_{hp}(\mymatrix{\mu}_{hp}), \mymatrix{q}_{hp} - \mymatrix{\mu}_{hp} \big\rangle - \langle \ell, \myvec{v}_{hp} - \myvec{w}_{hp}\rangle
\end{align*}
for all $(\myvec{v}_{hp},\mymatrix{q}_{hp})\in V_{hp}\times Q_{hp}$, where the operator $A(\myvec{w}_{hp}, \mymatrix{\mu}_{hp})\in (V_{hp}\times Q_{hp})^*$ is uniquely determined by
\begin{align*}
 a\big( (\myvec{w}_{hp}, \mymatrix{\mu}_{hp}), (\myvec{v}_{hp}, \mymatrix{q}_{hp}) \big)
 = \big\langle A(\myvec{w}_{hp}, \mymatrix{\mu}_{hp}), (\myvec{v}_{hp}, \mymatrix{q}_{hp}) \big\rangle\qquad  
 \forall \, (\myvec{v}_{hp},\mymatrix{q}_{hp})\in V_{hp}\times Q_{hp}.
\end{align*}
Hence, $\mathcal{E}(\cdot)$ is coercive, convex and subdifferentiable, which implies the existence of a minimizer of $\mathcal{E}(\cdot)$, see \cite[Ch.~II, Prop.~1.2]{Ekeland_1976}. Therefore, the discrete variational inequality \eqref{eq:discrete_variq_second_kind} also has a solution. The uniqueness of a solution $(\myvec{u}_{hp},\mymatrix{p}_{hp})\in V_{hp}\times Q_{hp}$ of \eqref{eq:discrete_variq_second_kind} follows directly from the ellipticity of $a(\cdot,\cdot)$. To see this, let $(\myvec{u}_{hp},\mymatrix{p}_{hp})$, $(\widetilde{\myvec{u}}_{hp},\widetilde{\mymatrix{p}}_{hp})\in V_{hp}\times Q_{hp}$ be two solutions. Then, adding the two associated inequalities resulting from \eqref{eq:discrete_variq_second_kind},
\begin{align*}
 a\big( (\myvec{u}_{hp},\mymatrix{p}_{hp}), (\widetilde{\myvec{u}}_{hp} - \myvec{u}_{hp},\widetilde{\mymatrix{p}}_{hp} - \mymatrix{p}_{hp}) \big) + \psi_{hp}(\widetilde{\mymatrix{p}}_{hp}) - \psi_{hp}(\mymatrix{p}_{hp}) 
 \geq \ell(\widetilde{\myvec{u}}_{hp} - \myvec{u}_{hp})
\end{align*}
and
\begin{align*}
  a\big( (\widetilde{\myvec{u}}_{hp}, \widetilde{\mymatrix{p}}_{hp}), (\myvec{u}_{hp} - \widetilde{\myvec{u}}_{hp}, \mymatrix{p}_{hp} - \widetilde{\mymatrix{p}}_{hp}) \big) + \psi_{hp}(\mymatrix{p}_{hp}) - \psi_{hp}(\widetilde{\mymatrix{p}}_{hp}) 
 \geq \ell(\myvec{u}_{hp} - \widetilde{\myvec{u}}_{hp}),
\end{align*}
together with the ellipticity of $a(\cdot,\cdot)$ give
\begin{align*}
 \alpha \, \big\Vert (\widetilde{\myvec{u}}_{hp} - \myvec{u}_{hp}), (\widetilde{\mymatrix{p}}_{hp} - \mymatrix{p}_{hp}) \big\Vert^2
 \leq a\big( (\widetilde{\myvec{u}}_{hp} - \myvec{u}_{hp}, \widetilde{\mymatrix{p}}_{hp} - \mymatrix{p}_{hp}), (\widetilde{\myvec{u}}_{hp} - \myvec{u}_{hp}, \widetilde{\mymatrix{p}}_{hp} - \mymatrix{p}_{hp}) \big)
 \leq 0,
\end{align*}
from which we deduce $(\myvec{u}_{hp},\mymatrix{p}_{hp}) = (\widetilde{\myvec{u}}_{hp},\widetilde{\mymatrix{p}}_{hp})$.
\end{proof}

We again note that under the assumption \eqref{eq:requirement_detF} it holds $\Lambda_{hp}^{(w)}=\Lambda_{hp}^{(s)}$ and, thus, \cite{Bammer2023Apriori} gives for uniform $h$ and uniform $p$
\begin{align*}
 \lim_{h/p \to 0} \big( \Vert \myvec{u}-\myvec{u}_{hp}\Vert_{1,\Omega}^2 + \Vert \mymatrix{p}-\mymatrix{p}_{hp}\Vert_{0,\Omega}^2 + \Vert\mymatrix{\lambda}-\mymatrix{\lambda}_{hp}\Vert_{0,\Omega}^2 \big) 
 = 0.
\end{align*} 
Furthermore, if $(\myvec{u},\mymatrix{p},\mymatrix{\lambda})\in H^s(\Omega,\myspace{R}^d) \times H^t(\Omega,\myspace{R}^{d\times d}) \times H^l(\Omega,\myspace{R}^{d\times d})$ with $s\geq 1$ and $t$, $l>d/2$ the a priori error estimates
\begin{align*}
 \Vert (\myvec{u}-\myvec{u}_{hp}, \mymatrix{p}-\mymatrix{p}_{hp}) \Vert^2  + \Vert\mymatrix{\lambda}-\mymatrix{\lambda}_{hp}\Vert_{0,\Omega}^2  
 \lesssim \frac{h^{\min(p,2s-2,t,l)}}{p^{\min(2s-2,t,l)}}
\end{align*}
are guaranteed. In the special case of lower-order finite elements (i.e.~if $p = 1$), we obtain the optimal order of convergence
\begin{align*}
 \Vert (\myvec{u}-\myvec{u}_{hp}, \mymatrix{p}-\mymatrix{p}_{hp}) \Vert^2  + \Vert\mymatrix{\lambda}-\mymatrix{\lambda}_{hp}\Vert_{0,\Omega}^2 
 \lesssim h^{2\min(1,s-1,t,l)};
\end{align*}
even if \eqref{eq:requirement_detF} does not hold true, cf.~\cite{Bammer2023Apriori}. Without the assumption \eqref{eq:requirement_detF} the convergence of higher-order finite element schemes based on \eqref{eq:discrete_variq_second_kind} is an open problem.

\medskip


\section{A~Posteriori Error Estimates}
\label{sec:abstractAposteriori}

\subsection{Error estimates based on an auxiliary problem}\label{subsec:AuxProSec}

In this section we derive upper and lower error estimates, which are based on 
the introduction of an auxiliary problem that takes the form of a variational equation. We emphasize that the estimates are derived for an arbitrary triple 
\[
(\myvec{u}_N,\mymatrix{p}_N,\mymatrix{\lambda}_N)\in V\times Q\times Q, 
\]
which may be the solution of the discrete variational inequality \eqref{eq:discrete_variq_second_kind} or the solution of the discrete mixed formulation \eqref{eq:discrete_mixed_variationalF} (or some approximations of them resulting from iterative solution schemes). The auxiliary problem takes the form: Find a pair $(\myvec{u}^*,\mymatrix{p}^*)\in V\times Q$ such that
\begin{align}\label{eq:additional_problem}
 a\big( (\myvec{u}^*,\mymatrix{p}^*),(\myvec{v},\mymatrix{q}) \big) 
 = \ell(\myvec{v}) - (\mymatrix{\lambda}_N, \mymatrix{q})_{0,\Omega} \qquad 
 \forall \, (\myvec{v},\mymatrix{q})\in V\times Q.
\end{align}
Note that the unique existence of a solution $(\myvec{u}^*,\mymatrix{p}^*)$ of \eqref{eq:additional_problem} is guaranteed by the Lax-Milgram-Lemma. By subtracting \eqref{eq:mixed_variationalF_01} from \eqref{eq:additional_problem} we find the relation
\begin{align}\label{eq:proof_a_posteriori_01}
 a\big( (\myvec{u}^*-\myvec{u}, \mymatrix{p}^*-\mymatrix{p}), (\myvec{v},\mymatrix{q}) \big) + (\mymatrix{\lambda}_N - \mymatrix{\lambda}, \mymatrix{q})_{0,\Omega} 
 = 0 \qquad 
 \forall \, (\myvec{v},\mymatrix{q})\in V\times Q.
\end{align}
For $\mymatrix{\mu}\in\Lambda$ we introduce the global plasticity error contribution
\begin{align*}
 \E(\mymatrix{\mu}) 
 := \Vert \mymatrix{\mu}-\mymatrix{\lambda}_N\Vert_{0,\Omega}^2 + \psi(\mymatrix{p}_N) - (\mymatrix{\mu}, \mymatrix{p}_N)_{0,\Omega}
\end{align*}
and show the following upper error estimate.

\begin{theorem}\label{lem:a_posteriori_error}
For every $\mymatrix{\mu}\in\Lambda$ there holds
\begin{align}\label{eq:reliability_estimate}
 \Vert (\myvec{u}-\myvec{u}_N, \mymatrix{p}-\mymatrix{p}_N)\Vert^2 + \Vert \mymatrix{\lambda}-\mymatrix{\lambda}_N\Vert_{0,\Omega}^2 
 \lesssim \Vert (\myvec{u}^*-\myvec{u}_N, \mymatrix{p}^*-\mymatrix{p}_N)\Vert^2 + \E(\mymatrix{\mu}).
\end{align}
\end{theorem}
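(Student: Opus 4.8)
The plan is to split the error $\Vert(\myvec{u}-\myvec{u}_N,\mymatrix{p}-\mymatrix{p}_N)\Vert + \Vert\mymatrix{\lambda}-\mymatrix{\lambda}_N\Vert_{0,\Omega}$ via the triangle inequality through the auxiliary solution $(\myvec{u}^*,\mymatrix{p}^*)$ and an auxiliary multiplier. Since the term $\Vert(\myvec{u}^*-\myvec{u}_N,\mymatrix{p}^*-\mymatrix{p}_N)\Vert^2$ already appears on the right-hand side of \eqref{eq:reliability_estimate}, the real work is to bound $\Vert(\myvec{u}^*-\myvec{u},\mymatrix{p}^*-\mymatrix{p})\Vert^2 + \Vert\mymatrix{\lambda}-\mymatrix{\lambda}_N\Vert_{0,\Omega}^2$ by a constant times $\E(\mymatrix{\mu})$ plus, possibly, the first term on the right. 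First I would test the defect relation \eqref{eq:proof_a_posteriori_01} with $(\myvec{v},\mymatrix{q}) = (\myvec{u}^*-\myvec{u},\mymatrix{p}^*-\mymatrix{p})$ and invoke the ellipticity constant $\alpha$ of $a(\cdot,\cdot)$ to get
\begin{align*}
 \alpha\,\Vert(\myvec{u}^*-\myvec{u},\mymatrix{p}^*-\mymatrix{p})\Vert^2 \leq a\big((\myvec{u}^*-\myvec{u},\mymatrix{p}^*-\mymatrix{p}),(\myvec{u}^*-\myvec{u},\mymatrix{p}^*-\mymatrix{p})\big) = -(\mymatrix{\lambda}_N-\mymatrix{\lambda},\mymatrix{p}^*-\mymatrix{p})_{0,\Omega}.
\end{align*}
The right-hand side is rewritten as $(\mymatrix{\lambda}-\mymatrix{\lambda}_N,\mymatrix{p}^*-\mymatrix{p}_N)_{0,\Omega} + (\mymatrix{\lambda}-\mymatrix{\lambda}_N,\mymatrix{p}_N-\mymatrix{p})_{0,\Omega}$, so a second ingredient is needed to control $(\mymatrix{\lambda}-\mymatrix{\lambda}_N,\mymatrix{p}_N-\mymatrix{p})_{0,\Omega}$.

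For that second ingredient I would use the complementarity structure. By \eqref{eq:plambda} we have $(\mymatrix{\lambda},\mymatrix{p})_{0,\Omega} = \psi(\mymatrix{p})$, and since $\mymatrix{\lambda}\in\Lambda$ the alternative description of $\Lambda$ gives $(\mymatrix{\lambda},\mymatrix{p}_N)_{0,\Omega}\leq\psi(\mymatrix{p}_N)$; analogously, using $\mymatrix{\mu}\in\Lambda$, one has $(\mymatrix{\mu},\mymatrix{p})_{0,\Omega}\leq\psi(\mymatrix{p})$. Combining these, and then writing $\mymatrix{\lambda}_N$ via $\pm\mymatrix{\mu}$, I would derive
\begin{align*}
 (\mymatrix{\lambda}-\mymatrix{\lambda}_N,\mymatrix{p}_N-\mymatrix{p})_{0,\Omega} \leq \psi(\mymatrix{p}_N)-(\mymatrix{\mu},\mymatrix{p}_N)_{0,\Omega} + (\mymatrix{\mu}-\mymatrix{\lambda}_N,\mymatrix{p}_N-\mymatrix{p})_{0,\Omega},
\end{align*}
after which a Cauchy–Schwarz / Young splitting of $(\mymatrix{\mu}-\mymatrix{\lambda}_N,\mymatrix{p}_N-\mymatrix{p})_{0,\Omega}$ produces the term $\Vert\mymatrix{\mu}-\mymatrix{\lambda}_N\Vert_{0,\Omega}^2$ (which sits inside $\E(\mymatrix{\mu})$) and a small multiple of $\Vert\mymatrix{p}_N-\mymatrix{p}\Vert_{0,\Omega}^2$ (which can be absorbed on the left using $\Vert\mymatrix{p}_N-\mymatrix{p}\Vert_{0,\Omega}\lesssim\Vert(\myvec{u}^*-\myvec{u},\mymatrix{p}^*-\mymatrix{p})\Vert+\Vert(\myvec{u}^*-\myvec{u}_N,\mymatrix{p}^*-\mymatrix{p}_N)\Vert$). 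The leftover term $(\mymatrix{\lambda}-\mymatrix{\lambda}_N,\mymatrix{p}^*-\mymatrix{p}_N)_{0,\Omega}$ is handled by Young's inequality directly against $\Vert\mymatrix{\lambda}-\mymatrix{\lambda}_N\Vert_{0,\Omega}^2$, which I would then recover on the left-hand side: indeed $\Vert\mymatrix{\lambda}-\mymatrix{\lambda}_N\Vert_{0,\Omega}^2$ is itself controlled by $a(\cdot,\cdot)$-coercivity applied to \eqref{eq:proof_a_posteriori_01} together with the inf-sup bound, or — more simply here — by noting \eqref{eq:proof_a_posteriori_01} gives $\Vert\mymatrix{\lambda}-\mymatrix{\lambda}_N\Vert_{0,\Omega}\lesssim\Vert(\myvec{u}^*-\myvec{u},\mymatrix{p}^*-\mymatrix{p})\Vert$ (test with arbitrary $(\myvec{o},\mymatrix{q})$ and use continuity of $a$, then take the supremum over $\mymatrix{q}$).

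Collecting everything, after absorbing the small $\Vert\mymatrix{p}_N-\mymatrix{p}\Vert_{0,\Omega}^2$ and $\Vert(\myvec{u}^*-\myvec{u},\mymatrix{p}^*-\mymatrix{p})\Vert^2$ contributions into the left-hand side (choosing the Young parameters small enough), I obtain $\Vert(\myvec{u}^*-\myvec{u},\mymatrix{p}^*-\mymatrix{p})\Vert^2 + \Vert\mymatrix{\lambda}-\mymatrix{\lambda}_N\Vert_{0,\Omega}^2 \lesssim \Vert(\myvec{u}^*-\myvec{u}_N,\mymatrix{p}^*-\mymatrix{p}_N)\Vert^2 + \E(\mymatrix{\mu})$, and a final triangle inequality $\Vert(\myvec{u}-\myvec{u}_N,\mymatrix{p}-\mymatrix{p}_N)\Vert \leq \Vert(\myvec{u}-\myvec{u}^*,\mymatrix{p}-\mymatrix{p}^*)\Vert + \Vert(\myvec{u}^*-\myvec{u}_N,\mymatrix{p}^*-\mymatrix{p}_N)\Vert$ finishes the proof. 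The main obstacle I anticipate is the bookkeeping in the complementarity step: one must carefully exploit that $\E(\mymatrix{\mu})\geq 0$ (so that $\psi(\mymatrix{p}_N)-(\mymatrix{\mu},\mymatrix{p}_N)_{0,\Omega}$ and $\Vert\mymatrix{\mu}-\mymatrix{\lambda}_N\Vert_{0,\Omega}^2$ can both be kept) and make sure every cross term is either inside $\E(\mymatrix{\mu})$, inside $\Vert(\myvec{u}^*-\myvec{u}_N,\mymatrix{p}^*-\mymatrix{p}_N)\Vert^2$, or absorbable — the nonsmoothness of $\psi$ means one cannot simply differentiate, so the inequalities defining $\Lambda$ and the identity \eqref{eq:plambda} must do all the work.
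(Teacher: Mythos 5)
Your proposal is correct and follows essentially the paper's own argument: it rests on the same defect relation \eqref{eq:proof_a_posteriori_01}, the same complementarity splitting via an arbitrary $\mymatrix{\mu}\in\Lambda$ together with \eqref{eq:plambda}/\eqref{eq:mixed_variationalF_02}, the same Young-type absorptions, and the same continuity bound $\Vert\mymatrix{\lambda}-\mymatrix{\lambda}_N\Vert_{0,\Omega}\leq c_a\,\Vert(\myvec{u}^*-\myvec{u},\mymatrix{p}^*-\mymatrix{p})\Vert$. The only (harmless) difference is that you apply the coercivity of $a(\cdot,\cdot)$ to $(\myvec{u}^*-\myvec{u},\mymatrix{p}^*-\mymatrix{p})$ and recover $(\myvec{u}-\myvec{u}_N,\mymatrix{p}-\mymatrix{p}_N)$ by a final triangle inequality, whereas the paper applies coercivity to $(\myvec{u}-\myvec{u}_N,\mymatrix{p}-\mymatrix{p}_N)$ directly.
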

\begin{proof}
Choosing $(\myvec{u}-\myvec{u}_N, \mymatrix{p}-\mymatrix{p}_N)\in V\times Q$ as test function in \eqref{eq:proof_a_posteriori_01}, taking the ellipticity and continuity of $a(\cdot,\cdot)$ into account and applying Young's inequality yield for any $\varepsilon>0$
\begin{align*}
 \alpha \, \Vert (\myvec{u}-\myvec{u}_N, \mymatrix{p}-\mymatrix{p}_N) \Vert^2
 &\leq a\big( (\myvec{u}-\myvec{u}_N, \mymatrix{p}-\mymatrix{p}_N), (\myvec{u}-\myvec{u}_N, \mymatrix{p}-\mymatrix{p}_N) \big) \\
 &= a\big( (\myvec{u}^*-\myvec{u}_N, \mymatrix{p}^*-\mymatrix{p}_N), (\myvec{u}-\myvec{u}_N, \mymatrix{p}-\mymatrix{p}_N) \big) + (\mymatrix{\lambda}_N - \mymatrix{\lambda}, \mymatrix{p}-\mymatrix{p}_N)_{0,\Omega} \\
 &\leq c_a \, \Vert (\myvec{u}^*-\myvec{u}_N, \mymatrix{p}^*-\mymatrix{p}_N) \Vert \, \Vert (\myvec{u}-\myvec{u}_N, \mymatrix{p}-\mymatrix{p}_N) \Vert + (\mymatrix{\lambda}_N - \mymatrix{\lambda}, \mymatrix{p}-\mymatrix{p}_N)_{0,\Omega} \\
 & \leq \frac{c_a^2}{4\varepsilon} \, \Vert (\myvec{u}^*-\myvec{u}_N, \mymatrix{p}^*-\mymatrix{p}_N) \Vert^2 + \varepsilon \, \Vert (\myvec{u}-\myvec{u}_N, \mymatrix{p}-\mymatrix{p}_N) \Vert^2 + (\mymatrix{\lambda}_N - \mymatrix{\lambda}, \mymatrix{p}-\mymatrix{p}_N)_{0,\Omega}.
\end{align*}
Exploiting \eqref{eq:mixed_variationalF_02} and $\mymatrix{\lambda} \in \Lambda$, we obtain by Cauchy-Schwarz's and Young's inequality for $\mymatrix{\mu}\in\Lambda$
\begin{align*}
 (\mymatrix{\lambda}_N-\mymatrix{\lambda}, \mymatrix{p}-\mymatrix{p}_N)_{0,\Omega}
 &= ( \mymatrix{\lambda}_N-\mymatrix{\mu}, \mymatrix{p}-\mymatrix{p}_N)_{0,\Omega} + ( \mymatrix{\mu}-\mymatrix{\lambda}, \mymatrix{p}-\mymatrix{p}_N )_{0,\Omega} \\
 &=( \mymatrix{\lambda}_N-\mymatrix{\mu}, \mymatrix{p}-\mymatrix{p}_N)_{0,\Omega} + ( \mymatrix{\mu}-\mymatrix{\lambda}, \mymatrix{p})_{0,\Omega} + (\mymatrix{\lambda}, \mymatrix{p}_N )_{0,\Omega} - ( \mymatrix{\mu}, \mymatrix{p}_N )_{0,\Omega} \\
 & \leq ( \mymatrix{\lambda}_N-\mymatrix{\mu}, \mymatrix{p}-\mymatrix{p}_N)_{0,\Omega} + (\mymatrix{\lambda}, \mymatrix{p}_N )_{0,\Omega} - ( \mymatrix{\mu}, \mymatrix{p}_N )_{0,\Omega} \\
 & \leq ( \mymatrix{\lambda}_N-\mymatrix{\mu}, \mymatrix{p}-\mymatrix{p}_N)_{0,\Omega} + \psi(\mymatrix{p}_N) - ( \mymatrix{\mu}, \mymatrix{p}_N )_{0,\Omega} \\
 & \leq  \frac{1}{4\varepsilon} \, \Vert \mymatrix{\mu}-\mymatrix{\lambda}_N \Vert_{0,\Omega}^2 + \varepsilon \, \Vert \mymatrix{p}-\mymatrix{p}_N\Vert_{0,\Omega}^2 +  \psi(\mymatrix{p}_N) - ( \mymatrix{\mu}, \mymatrix{p}_N )_{0,\Omega} .
\end{align*}
Combining the last two estimates yields
\begin{align*}
 (\alpha - 2\varepsilon) \, \Vert (\myvec{u}-\myvec{u}_N, \mymatrix{p}-\mymatrix{p}_N) \Vert^2
 \leq \frac{c_a^2}{4\varepsilon} \, \Vert (\myvec{u}^*-\myvec{u}_N, \mymatrix{p}^*-\mymatrix{p}_N) \Vert^2 + \frac{1}{4\varepsilon} \, \Vert \mymatrix{\mu}-\mymatrix{\lambda}_N \Vert_{0,\Omega}^2 + \psi(\mymatrix{p}_N) - ( \mymatrix{\mu}, \mymatrix{p}_N )_{0,\Omega}.
\end{align*}
For $\varepsilon < \alpha / 2$ we therefore deduce
\begin{align}\label{eq:proof_a_posteriori_01b}
  \Vert (\myvec{u}-\myvec{u}_N, \mymatrix{p}-\mymatrix{p}_N) \Vert^2
& \leq \frac{c_a^2}{c_{\varepsilon}} \, \Vert (\myvec{u}^*-\myvec{u}_N, \mymatrix{p}^*-\mymatrix{p}_N) \Vert^2  + \frac{1}{c_{\varepsilon}} \, \Vert \mymatrix{\mu}-\mymatrix{\lambda}_N \Vert_{0,\Omega}^2 
+ \frac{4\varepsilon}{c_{\varepsilon}} \, \big( \psi(\mymatrix{p}_N) - ( \mymatrix{\mu}, \mymatrix{p}_N )_{0,\Omega} \big)
\end{align}
with $c_{\varepsilon} := 4\varepsilon(\alpha - 2\varepsilon)$. As $\mymatrix{\lambda}, \mymatrix{\lambda}_N\in Q$ we may choose $\myvec{v}=\myvec{o}$ and $\mymatrix{q}=\mymatrix{\lambda}-\mymatrix{\lambda}_N$ in \eqref{eq:proof_a_posteriori_01} to obtain 
\begin{align*}
 \Vert\mymatrix{\lambda}-\mymatrix{\lambda}_N\Vert_{0,\Omega}^2
 &= (\mymatrix{\lambda}-\mymatrix{\lambda}_N, \mymatrix{\lambda}-\mymatrix{\lambda}_N)_{0,\Omega}  \\
 &= - a\big( (\myvec{u}-\myvec{u}^*, \mymatrix{p}-\mymatrix{p}^*), (\myvec{o}, \mymatrix{\lambda}-\mymatrix{\lambda}_N) \big) \\
 &\leq c_a \, \Vert (\myvec{u}-\myvec{u}^*, \mymatrix{p}-\mymatrix{p}^*) \Vert \, \Vert \mymatrix{\lambda}-\mymatrix{\lambda}_N\Vert_{0,\Omega},
\end{align*}
where the last inequalitiy results from the continuity of $a(\cdot,\cdot)$. Thus, the triangle inequality gives
\begin{align*}
 \Vert\mymatrix{\lambda}-\mymatrix{\lambda}_N\Vert_{0,\Omega}
 \leq c_a \, \Vert (\myvec{u}-\myvec{u}^*, \mymatrix{p}-\mymatrix{p}^*) \Vert 
 \leq c_a \, \Vert (\myvec{u}-\myvec{u}_N, \mymatrix{p}-\mymatrix{p}_N) \Vert + c_a \, \Vert (\myvec{u}^*-\myvec{u}_N, \mymatrix{p}^*-\mymatrix{p}_N) \Vert.
\end{align*}
Hence, we have
\begin{align*}
 \Vert (\myvec{u}-\myvec{u}_N, \mymatrix{p}-\mymatrix{p}_N) \Vert^2 + \Vert\mymatrix{\lambda}-\mymatrix{\lambda}_N\Vert_{0,\Omega}^2 & \leq (1 +2c_a^2) \, \Vert (\myvec{u}-\myvec{u}_N, \mymatrix{p}-\mymatrix{p}_N) \Vert^2 + 2c_a^2 \, \Vert (\myvec{u}^*-\myvec{u}_N, \mymatrix{p}^*-\mymatrix{p}_N) \Vert^2,
\end{align*} 
from which we deduce, by inserting \eqref{eq:proof_a_posteriori_01b},
\begin{align*}
  \Vert (\myvec{u}-\myvec{u}_N, \mymatrix{p}-\mymatrix{p}_N) \Vert^2 + \Vert\mymatrix{\lambda}-\mymatrix{\lambda}_N\Vert_{0,\Omega}^2 
  &\leq \frac{c_a^2 \big( 1 + 2(c_a^2 + c_\varepsilon) \big)}{c_\varepsilon} \, \Vert (\myvec{u}^*-\myvec{u}_N, \mymatrix{p}^*-\mymatrix{p}_N) \Vert^2 + \frac{1+c_a^2}{c_\varepsilon} \, \Vert \mymatrix{\mu}-\mymatrix{\lambda}_N \Vert_{0,\Omega}^2 \\
  & \qquad + \frac{4\varepsilon(1+2c_a^2)}{c_\varepsilon} \, \big( \psi(\mymatrix{p}_N) - ( \mymatrix{\mu}, \mymatrix{p}_N )_{0,\Omega} \big).  
\end{align*}
Since $\mymatrix{\mu}\in\Lambda$ and $\mymatrix{p}_N \in Q$ we have $\psi(\mymatrix{p}_N) - ( \mymatrix{\mu}, \mymatrix{p}_N ) \geq 0$. Thus, the assertion follows by bounding the three constants in the above estimate by their common maximum.
\end{proof}

To obtain lower error estimates we minimize $\E(\cdot)$ over the set $\Lambda$. For this purpose, we define 
\begin{align}\label{eq:defi_minimizer_mu}
 \mymatrix{\mu}^* 
 := \min\left\lbrace 1, \frac{\sigma_y}{ \; \big\vert \widehat{\mymatrix{\mu}} \big\vert_F} \right\rbrace \widehat{\mymatrix{\mu}}, \qquad 
 \widehat{\mymatrix{\mu}}:= \mymatrix{\lambda}_{N} + \frac{1}{2} \, \mymatrix{p}_{N}.
\end{align}
Obviously, we have $\mymatrix{\mu}^*\in\Lambda$ and as it is the projection of $\widehat{\mymatrix{\mu}}$ with respect to $|\cdot|_F$ we obtain
\begin{align*}
 \vert \widehat{\mymatrix{\mu}}  - \mymatrix{\mu}^* \vert_F
 \leq \vert \widehat{\mymatrix{\mu}}  - \mymatrix{\mu} \vert_F \quad
 \text{a.e.~in } \Omega \qquad 
 \forall \, \mymatrix{\mu}\in \Lambda,
\end{align*}
which implies
\begin{align*}
 \Vert \widehat{\mymatrix{\mu}} - \mymatrix{\mu}^* \Vert_{0,\Omega} 
 \leq \Vert \widehat{\mymatrix{\mu}}  - \mymatrix{\mu} \Vert_{0,\Omega} \qquad
 \forall \, \mymatrix{\mu}\in \Lambda,
\end{align*}
i.e.~$\mymatrix{\mu}^*$ represents the $L^2$-projection of $\widehat{\mymatrix{\mu}}$ onto $\Lambda$. Applying Lemma~\ref{prop:abstract_minP} from the appendix we conclude that $\mymatrix{\mu}^*$ uniquely minimizes $\E(\cdot)$ over the set $\Lambda$, i.e.
\pagebreak
\begin{align}\label{eq:local_minimization_apost}
 \E(\mymatrix{\mu}^*) = \min_{\mymatrix{\mu}\in\Lambda} \E(\mymatrix{\mu}).
\end{align}

\begin{lemma}\label{lem:optimal_lambda}
There holds 
\begin{align}\label{eq:efficiency_E}
 \E(\mymatrix{\mu}^*) 
 & \leq \Vert \mymatrix{\lambda}-\mymatrix{\lambda}_N\Vert_{0,\Omega}^2 + \big( \Vert \sigma_y \Vert_{0,\Omega} + \Vert \mymatrix{\lambda}\Vert_{0,\Omega} \big) \, \Vert \mymatrix{p}-\mymatrix{p}_N\Vert_{0,\Omega}.
\end{align}
\end{lemma}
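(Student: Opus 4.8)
The plan is to use the already-established minimality of $\mymatrix{\mu}^*$: by \eqref{eq:local_minimization_apost} we have $\E(\mymatrix{\mu}^*) = \min_{\mymatrix{\mu}\in\Lambda}\E(\mymatrix{\mu})$, and since the exact Lagrange multiplier $\mymatrix{\lambda}$ belongs to $\Lambda$, it suffices to bound the single quantity $\E(\mymatrix{\lambda}) = \Vert\mymatrix{\lambda}-\mymatrix{\lambda}_N\Vert_{0,\Omega}^2 + \psi(\mymatrix{p}_N) - (\mymatrix{\lambda},\mymatrix{p}_N)_{0,\Omega}$. The first term already appears on the right-hand side of \eqref{eq:efficiency_E}, so the whole work reduces to controlling the plasticity contribution $\psi(\mymatrix{p}_N) - (\mymatrix{\lambda},\mymatrix{p}_N)_{0,\Omega}$ by the plastic-strain error $\Vert\mymatrix{p}-\mymatrix{p}_N\Vert_{0,\Omega}$.

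First I would invoke the complementarity relation \eqref{eq:plambda}, which gives $(\mymatrix{\lambda},\mymatrix{p})_{0,\Omega} = (\sigma_y,\vert\mymatrix{p}\vert_F)_{0,\Omega} = \psi(\mymatrix{p})$. Inserting this, I rewrite
\begin{align*}
 \psi(\mymatrix{p}_N) - (\mymatrix{\lambda},\mymatrix{p}_N)_{0,\Omega}
 = \big( \psi(\mymatrix{p}_N) - \psi(\mymatrix{p}) \big) + (\mymatrix{\lambda},\mymatrix{p}-\mymatrix{p}_N)_{0,\Omega}.
\end{align*}
For the first summand I would use the reverse triangle inequality for the Frobenius norm, $\big\vert\,\vert\mymatrix{p}_N\vert_F - \vert\mymatrix{p}\vert_F\,\big\vert \le \vert\mymatrix{p}-\mymatrix{p}_N\vert_F$ pointwise, together with the nonnegativity of $\sigma_y$ and Cauchy--Schwarz, to obtain $\vert\psi(\mymatrix{p}_N) - \psi(\mymatrix{p})\vert \le (\sigma_y,\vert\mymatrix{p}-\mymatrix{p}_N\vert_F)_{0,\Omega} \le \Vert\sigma_y\Vert_{0,\Omega}\,\Vert\mymatrix{p}-\mymatrix{p}_N\Vert_{0,\Omega}$. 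For the second summand, Cauchy--Schwarz directly yields $(\mymatrix{\lambda},\mymatrix{p}-\mymatrix{p}_N)_{0,\Omega} \le \Vert\mymatrix{\lambda}\Vert_{0,\Omega}\,\Vert\mymatrix{p}-\mymatrix{p}_N\Vert_{0,\Omega}$. Adding the two bounds gives $\psi(\mymatrix{p}_N) - (\mymatrix{\lambda},\mymatrix{p}_N)_{0,\Omega} \le \big(\Vert\sigma_y\Vert_{0,\Omega} + \Vert\mymatrix{\lambda}\Vert_{0,\Omega}\big)\,\Vert\mymatrix{p}-\mymatrix{p}_N\Vert_{0,\Omega}$, and combining with $\E(\mymatrix{\mu}^*) \le \E(\mymatrix{\lambda})$ produces exactly \eqref{eq:efficiency_E}.

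There is no real obstacle in this proof; it is a short chain of the triangle and Cauchy--Schwarz inequalities. The only step that genuinely carries the argument is the choice of the competitor $\mymatrix{\mu}=\mymatrix{\lambda}$ in the minimization combined with the complementarity identity \eqref{eq:plambda}, which is precisely what converts the non-smooth, a priori uncontrolled term $\psi(\mymatrix{p}_N)$ into a first-order quantity in the plastic-strain error. (One could note additionally that the coefficient of $\Vert\mymatrix{p}-\mymatrix{p}_N\Vert_{0,\Omega}$ can be bounded a posteriori via the a priori stability estimate on $(\myvec{u},\mymatrix{p},\mymatrix{\lambda})$, but for the statement of the lemma this is not needed.)
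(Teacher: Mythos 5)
Your proof is correct and follows essentially the same route as the paper: both bound $\E(\mymatrix{\mu}^*)$ by $\E(\mymatrix{\lambda})$ via the minimality property \eqref{eq:local_minimization_apost}, then use the complementarity identity \eqref{eq:plambda} to rewrite the plasticity term, and conclude with the pointwise reverse triangle inequality and Cauchy--Schwarz. No gaps; the argument matches the paper's proof step for step.
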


\begin{proof}
The identity $\mymatrix{\lambda} : \mymatrix{p} = \sigma_y \, \vert\mymatrix{p}\vert_F$ a.e.~in $\Omega$, see \eqref{eq:plambda}, the minimality of $\mymatrix{\mu}^{\ast}$ and the Cauchy-Schwarz inequality yield
\begin{align*}
 \E(\mymatrix{\mu}^*) 
 &\leq \E(\mymatrix{\lambda}) \\
 &= \Vert \mymatrix{\lambda}-\mymatrix{\lambda}_N\Vert_{0,\Omega}^2 + (\sigma_y, |\mymatrix{p}_N|_F)_{0,\Omega} - (\mymatrix{\lambda}, \mymatrix{p}_N)_{0,\Omega} \\
 &= \Vert \mymatrix{\lambda}-\mymatrix{\lambda}_N\Vert_{0,\Omega}^2 + \big( \sigma_y, \vert \mymatrix{p}_N\vert_F -\vert \mymatrix{p} \vert_F \big)_{0,\Omega} + ( \mymatrix{\lambda},  \mymatrix{p} - \mymatrix{p}_N )_{0,\Omega} \\
& \leq \Vert \mymatrix{\lambda}-\mymatrix{\lambda}_N\Vert_{0,\Omega}^2 + \big( \Vert \sigma_y \Vert_{0,\Omega} + \Vert \mymatrix{\lambda}\Vert_{0,\Omega} \big) \, \Vert \mymatrix{p}-\mymatrix{p}_N\Vert_{0,\Omega},
\end{align*}
where the last estimate results from the application of the reverse triangle inequality.
\end{proof}

Note that as $\mymatrix{\lambda}\in\Lambda$ we have $\Vert \mymatrix{\lambda}\Vert_{0,\Omega} \leq \Vert \sigma_y \Vert_{0,\Omega}$.

\begin{lemma}\label{lem:effLemma}
There holds
\begin{align*}
 \Vert (\myvec{u}^*-\myvec{u}_N, \mymatrix{p}^*-\mymatrix{p}_N) \Vert^2 
 \lesssim \Vert (\myvec{u}-\myvec{u}_N, \mymatrix{p}-\mymatrix{p}_N) \Vert^2 + \Vert \mymatrix{\lambda}-\mymatrix{\lambda}_N\Vert_{0,\Omega}^2.
\end{align*}
\end{lemma}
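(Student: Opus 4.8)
The plan is to use the defining relation \eqref{eq:proof_a_posteriori_01} for the auxiliary error $(\myvec{u}^*-\myvec{u},\mymatrix{p}^*-\mymatrix{p})$ together with the ellipticity and continuity of $a(\cdot,\cdot)$, and then to split the difference $(\myvec{u}^*-\myvec{u}_N,\mymatrix{p}^*-\mymatrix{p}_N)$ via the triangle inequality into the "true" error $(\myvec{u}-\myvec{u}_N,\mymatrix{p}-\mymatrix{p}_N)$ and the auxiliary-to-exact gap $(\myvec{u}^*-\myvec{u},\mymatrix{p}^*-\mymatrix{p})$. Concretely, first I would bound $\Vert(\myvec{u}^*-\myvec{u},\mymatrix{p}^*-\mymatrix{p})\Vert$ by testing \eqref{eq:proof_a_posteriori_01} with $(\myvec{v},\mymatrix{q}) = (\myvec{u}^*-\myvec{u},\mymatrix{p}^*-\mymatrix{p})$: by ellipticity the left-hand diagonal term dominates $\alpha\Vert(\myvec{u}^*-\myvec{u},\mymatrix{p}^*-\mymatrix{p})\Vert^2$, while the remaining term is $-(\mymatrix{\lambda}_N-\mymatrix{\lambda},\mymatrix{p}^*-\mymatrix{p})_{0,\Omega}$, which is controlled by Cauchy--Schwarz as $\Vert\mymatrix{\lambda}-\mymatrix{\lambda}_N\Vert_{0,\Omega}\,\Vert(\myvec{u}^*-\myvec{u},\mymatrix{p}^*-\mymatrix{p})\Vert$. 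Dividing through yields
\[
 \Vert(\myvec{u}^*-\myvec{u},\mymatrix{p}^*-\mymatrix{p})\Vert \;\lesssim\; \Vert\mymatrix{\lambda}-\mymatrix{\lambda}_N\Vert_{0,\Omega}.
\]

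Then I would simply write, by the triangle inequality,
\[
 \Vert(\myvec{u}^*-\myvec{u}_N,\mymatrix{p}^*-\mymatrix{p}_N)\Vert \;\leq\; \Vert(\myvec{u}-\myvec{u}_N,\mymatrix{p}-\mymatrix{p}_N)\Vert + \Vert(\myvec{u}^*-\myvec{u},\mymatrix{p}^*-\mymatrix{p})\Vert,
\]
square both sides (using $(a+b)^2\leq 2a^2+2b^2$), and substitute the bound on the auxiliary gap just obtained. This gives
\[
 \Vert(\myvec{u}^*-\myvec{u}_N,\mymatrix{p}^*-\mymatrix{p}_N)\Vert^2 \;\lesssim\; \Vert(\myvec{u}-\myvec{u}_N,\mymatrix{p}-\mymatrix{p}_N)\Vert^2 + \Vert\mymatrix{\lambda}-\mymatrix{\lambda}_N\Vert_{0,\Omega}^2,
\]
which is exactly the claimed estimate. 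Note this is essentially the converse direction to Theorem~\ref{lem:a_posteriori_error}, and in fact a sub-computation already appearing there (the step bounding $\Vert\mymatrix{\lambda}-\mymatrix{\lambda}_N\Vert_{0,\Omega}$ via $\Vert(\myvec{u}-\myvec{u}^*,\mymatrix{p}-\mymatrix{p}^*)\Vert$) is the same mechanism used in reverse.

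I do not anticipate a genuine obstacle here: the argument is a short, self-contained consequence of the Galerkin-type orthogonality relation \eqref{eq:proof_a_posteriori_01} and the norm equivalence built into $a(\cdot,\cdot)$. The only point requiring a little care is making sure the test function $(\myvec{u}^*-\myvec{u},\mymatrix{p}^*-\mymatrix{p})$ is admissible in \eqref{eq:proof_a_posteriori_01} — it is, since both $(\myvec{u}^*,\mymatrix{p}^*)$ and $(\myvec{u},\mymatrix{p})$ lie in $V\times Q$ — and correctly tracking that the hidden constant depends only on $\alpha$ and $c_a$, hence not on $h$ or $p$, so that the $\lesssim$ notation is justified.
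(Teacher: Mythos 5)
Your proposal is correct and follows essentially the same route as the paper: test \eqref{eq:proof_a_posteriori_01} with $(\myvec{u}^*-\myvec{u},\mymatrix{p}^*-\mymatrix{p})$, use ellipticity and the Cauchy--Schwarz inequality to obtain $\Vert(\myvec{u}^*-\myvec{u},\mymatrix{p}^*-\mymatrix{p})\Vert \leq \tfrac{1}{\alpha}\Vert\mymatrix{\lambda}-\mymatrix{\lambda}_N\Vert_{0,\Omega}$, and conclude by the triangle inequality and squaring. The only cosmetic difference is that the paper keeps the intermediate bound $\Vert\mymatrix{\lambda}-\mymatrix{\lambda}_N\Vert_{0,\Omega}\,\Vert\mymatrix{p}^*-\mymatrix{p}\Vert_{0,\Omega}$ explicit before absorbing it into the product norm, which you do implicitly.
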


\begin{proof}
Testing \eqref{eq:proof_a_posteriori_01} by $(\myvec{u}^*-\myvec{u}, \mymatrix{p}^* -\mymatrix{p})\in V\times Q$ yields
\begin{align*}
 a\big( (\myvec{u}^*-\myvec{u}, \mymatrix{p}^*-\mymatrix{p}), (\myvec{u}^*-\myvec{u}, \mymatrix{p}^*-\mymatrix{p}) \big) 
 = (\mymatrix{\lambda}-\mymatrix{\lambda}_N, \mymatrix{p}^* - \mymatrix{p})_{0,\Omega}.
\end{align*}
Hence, the ellipticity of $a(\cdot,\cdot)$ and the Cauchy-Schwarz inequality imply
\begin{align*}
 \alpha \, \Vert (\myvec{u}^*-\myvec{u}, \mymatrix{p}^*-\mymatrix{p})\Vert^2 
 \leq \Vert \mymatrix{\lambda}-\mymatrix{\lambda}_N\Vert_{0,\Omega} \, \Vert \mymatrix{p}^*-\mymatrix{p}\Vert_{0,\Omega}
 \leq \Vert \mymatrix{\lambda}-\mymatrix{\lambda}_N\Vert_{0,\Omega} \, \Vert (\myvec{u}^*-\myvec{u}, \mymatrix{p}^*-\mymatrix{p})\Vert.
\end{align*}
Finally, by the triangle inequality, we have
\begin{align*} 
 \Vert (\myvec{u}^*-\myvec{u}_N, \mymatrix{p}^*-\mymatrix{p}_N)\Vert 
 & \leq \Vert (\myvec{u}^*-\myvec{u}, \mymatrix{p}^*-\mymatrix{p})\Vert + \Vert (\myvec{u}-\myvec{u}_N, \mymatrix{p}-\mymatrix{p}_N)\Vert \\
 & \leq \frac{1}{\alpha} \, \Vert \mymatrix{\lambda}-\mymatrix{\lambda}_N\Vert_{0,\Omega} + \Vert (\myvec{u}-\myvec{u}_N, \mymatrix{p}-\mymatrix{p}_N)\Vert,
\end{align*}
which completes the argument.
\end{proof}

\begin{theorem}\label{thm:efficiency}
There holds
\begin{align}\label{eq:efficiency_est}
 \Vert (\myvec{u}^*-\myvec{u}_N, \mymatrix{p}^*-\mymatrix{p}_N)\Vert^2 + \E(\mymatrix{\mu}^*)\lesssim \Vert (\myvec{u}-\myvec{u}_N, \mymatrix{p}-\mymatrix{p}_N)\Vert^2 + \Vert \mymatrix{\lambda}-\mymatrix{\lambda}_N\Vert_{0,\Omega}^2 +  \Vert \mymatrix{p}-\mymatrix{p}_N\Vert_{0,\Omega}.
\end{align}
\end{theorem}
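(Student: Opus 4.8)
The plan is to combine the two preceding lemmas in the obvious way. By Lemma~\ref{lem:effLemma} we already know that
\[
 \Vert (\myvec{u}^*-\myvec{u}_N, \mymatrix{p}^*-\mymatrix{p}_N) \Vert^2
 \lesssim \Vert (\myvec{u}-\myvec{u}_N, \mymatrix{p}-\mymatrix{p}_N) \Vert^2 + \Vert \mymatrix{\lambda}-\mymatrix{\lambda}_N\Vert_{0,\Omega}^2,
\]
so it only remains to bound $\E(\mymatrix{\mu}^*)$. For this I would invoke Lemma~\ref{lem:optimal_lambda}, which gives
\[
 \E(\mymatrix{\mu}^*)
 \leq \Vert \mymatrix{\lambda}-\mymatrix{\lambda}_N\Vert_{0,\Omega}^2 + \big( \Vert \sigma_y \Vert_{0,\Omega} + \Vert \mymatrix{\lambda}\Vert_{0,\Omega} \big) \, \Vert \mymatrix{p}-\mymatrix{p}_N\Vert_{0,\Omega}.
\]
Since $\mymatrix{\lambda}\in\Lambda$ implies $\Vert \mymatrix{\lambda}\Vert_{0,\Omega}\leq\Vert\sigma_y\Vert_{0,\Omega}$ (as remarked right after Lemma~\ref{lem:optimal_lambda}), the coefficient $\Vert \sigma_y \Vert_{0,\Omega} + \Vert \mymatrix{\lambda}\Vert_{0,\Omega}$ is a fixed data-dependent constant, hence hidden in the $\lesssim$ symbol, and we get $\E(\mymatrix{\mu}^*)\lesssim \Vert \mymatrix{\lambda}-\mymatrix{\lambda}_N\Vert_{0,\Omega}^2 + \Vert \mymatrix{p}-\mymatrix{p}_N\Vert_{0,\Omega}$.

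Adding the two bounds and absorbing constants into the common maximum yields exactly \eqref{eq:efficiency_est}. There is no real obstacle here; the work has all been done in the two lemmas, and the only point to watch is that the $\Vert \mymatrix{p}-\mymatrix{p}_N\Vert_{0,\Omega}$ term appears to the first power (not squared) on the right-hand side, which is why the efficiency is, as announced in the introduction, suboptimal in the plastic strain for higher-order methods. One could optionally remark that this term can be further estimated using the a priori convergence of $\mymatrix{p}_N\to\mymatrix{p}$, but that is not needed for the statement as written.

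Hence the proof is essentially a two-line combination:

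\begin{proof}
By Lemma~\ref{lem:optimal_lambda} and the subsequent observation that $\Vert \mymatrix{\lambda}\Vert_{0,\Omega} \leq \Vert \sigma_y \Vert_{0,\Omega}$, we have
\begin{align*}
 \E(\mymatrix{\mu}^*)
 &\leq \Vert \mymatrix{\lambda}-\mymatrix{\lambda}_N\Vert_{0,\Omega}^2 + 2 \, \Vert \sigma_y \Vert_{0,\Omega} \, \Vert \mymatrix{p}-\mymatrix{p}_N\Vert_{0,\Omega}
 \lesssim \Vert \mymatrix{\lambda}-\mymatrix{\lambda}_N\Vert_{0,\Omega}^2 + \Vert \mymatrix{p}-\mymatrix{p}_N\Vert_{0,\Omega}.
\end{align*}
Combining this with Lemma~\ref{lem:effLemma}, which states
\begin{align*}
 \Vert (\myvec{u}^*-\myvec{u}_N, \mymatrix{p}^*-\mymatrix{p}_N) \Vert^2
 \lesssim \Vert (\myvec{u}-\myvec{u}_N, \mymatrix{p}-\mymatrix{p}_N) \Vert^2 + \Vert \mymatrix{\lambda}-\mymatrix{\lambda}_N\Vert_{0,\Omega}^2,
\end{align*}
and bounding the two hidden constants by their common maximum, we arrive at
\begin{align*}
 \Vert (\myvec{u}^*-\myvec{u}_N, \mymatrix{p}^*-\mymatrix{p}_N)\Vert^2 + \E(\mymatrix{\mu}^*)
 \lesssim \Vert (\myvec{u}-\myvec{u}_N, \mymatrix{p}-\mymatrix{p}_N)\Vert^2 + \Vert \mymatrix{\lambda}-\mymatrix{\lambda}_N\Vert_{0,\Omega}^2 + \Vert \mymatrix{p}-\mymatrix{p}_N\Vert_{0,\Omega},
\end{align*}
which is \eqref{eq:efficiency_est}.
\end{proof}
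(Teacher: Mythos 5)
Your proof is correct and follows exactly the paper's argument: Theorem~\ref{thm:efficiency} is proved there by directly combining Lemma~\ref{lem:optimal_lambda} and Lemma~\ref{lem:effLemma}, with the data-dependent factor $\Vert\sigma_y\Vert_{0,\Omega}+\Vert\mymatrix{\lambda}\Vert_{0,\Omega}$ absorbed into the hidden constant just as you do.
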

\begin{proof}
 The assertion follows directly by combining Lemma~\ref{lem:optimal_lambda} and Lemma~\ref{lem:effLemma}.
\end{proof}

%

The lower estimate in \eqref{eq:efficiency_est} is suboptimal as it contains the linear term $\Vert \mymatrix{p}-\mymatrix{p}_N\Vert_{0,\Omega}$ resulting from the estimate \eqref{eq:efficiency_E}. Under some additional assumptions on $\mymatrix{\lambda}_N$ and $
\mymatrix{q}_N$ this linear term as well as $\E(\mymatrix{\cdot})$ do not occur in the upper and lower estimate.
\begin{corollary}\label{cor:simple}
Let $\mymatrix{\lambda}_N\in\Lambda$ and let the relation
\begin{align}\label{eq:psi_rel}
 (\mymatrix{\lambda}_N,\mymatrix{q}_N)_{0,\Omega}
 = \psi(\mymatrix{q}_N)
\end{align}
be fulfilled. Then,
\begin{align*}
 \Vert (\myvec{u}^*-\myvec{u}_N, \mymatrix{p}^*-\mymatrix{p}_N)\Vert^2 
 \approx \Vert (\myvec{u}-\myvec{u}_N, \mymatrix{p}-\mymatrix{p}_N)\Vert^2 + \Vert \mymatrix{\lambda}-\mymatrix{\lambda}_N\Vert_{0,\Omega}^2.
\end{align*}
\end{corollary}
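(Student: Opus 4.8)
The plan is to obtain the two-sided bound by pairing an inequality that already holds unconditionally with the reliability estimate of Theorem~\ref{lem:a_posteriori_error}, the latter evaluated at a conveniently chosen admissible multiplier, so that the corollary becomes a one-line consequence of results already established.

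First I would observe that the estimate
\[
 \Vert (\myvec{u}^*-\myvec{u}_N, \mymatrix{p}^*-\mymatrix{p}_N)\Vert^2
 \lesssim \Vert (\myvec{u}-\myvec{u}_N, \mymatrix{p}-\mymatrix{p}_N)\Vert^2 + \Vert \mymatrix{\lambda}-\mymatrix{\lambda}_N\Vert_{0,\Omega}^2
\]
is precisely Lemma~\ref{lem:effLemma} and uses neither of the two additional hypotheses, so one direction of the claimed equivalence is already available.

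For the reverse direction, the key observation is that under the hypotheses of the corollary the global plasticity error contribution vanishes at $\mymatrix{\mu}=\mymatrix{\lambda}_N$: since $\mymatrix{\lambda}_N\in\Lambda$ the quantity $\E(\mymatrix{\lambda}_N)$ is defined, and by the complementarity relation \eqref{eq:psi_rel},
\[
 \E(\mymatrix{\lambda}_N)
 = \Vert \mymatrix{\lambda}_N-\mymatrix{\lambda}_N\Vert_{0,\Omega}^2 + \psi(\mymatrix{p}_N) - (\mymatrix{\lambda}_N, \mymatrix{p}_N)_{0,\Omega}
 = \psi(\mymatrix{p}_N)-\psi(\mymatrix{p}_N) = 0 .
\]
As $\mymatrix{\lambda}_N\in\Lambda$ is exactly the admissibility required to invoke Theorem~\ref{lem:a_posteriori_error}, I would apply that theorem with $\mymatrix{\mu}=\mymatrix{\lambda}_N$ to get
\[
 \Vert (\myvec{u}-\myvec{u}_N, \mymatrix{p}-\mymatrix{p}_N)\Vert^2 + \Vert \mymatrix{\lambda}-\mymatrix{\lambda}_N\Vert_{0,\Omega}^2
 \lesssim \Vert (\myvec{u}^*-\myvec{u}_N, \mymatrix{p}^*-\mymatrix{p}_N)\Vert^2 + \E(\mymatrix{\lambda}_N)
 = \Vert (\myvec{u}^*-\myvec{u}_N, \mymatrix{p}^*-\mymatrix{p}_N)\Vert^2 ,
\]
which is the missing inequality; combining the two displayed bounds yields the asserted $\approx$.

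There is essentially no obstacle here. The only point worth flagging is that the assumption $\mymatrix{\lambda}_N\in\Lambda$ plays a double role: it simultaneously makes $\mymatrix{\lambda}_N$ an admissible choice of $\mymatrix{\mu}$ in Theorem~\ref{lem:a_posteriori_error} and, together with \eqref{eq:psi_rel}, annihilates the term $\E(\mymatrix{\mu})$ in the upper bound — which is exactly the nonlinear, suboptimal contribution that Theorem~\ref{thm:efficiency} could not remove in general. No quadrature-exactness or mesh-shape assumptions enter the argument.
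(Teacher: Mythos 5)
Your proposal is correct and coincides with the paper's own argument: the paper likewise observes that \eqref{eq:psi_rel} (applied with $\mymatrix{q}_N=\mymatrix{p}_N$) together with $\mymatrix{\lambda}_N\in\Lambda$ gives $\E(\mymatrix{\lambda}_N)=0$, and then combines Theorem~\ref{lem:a_posteriori_error} (with $\mymatrix{\mu}=\mymatrix{\lambda}_N$) and Lemma~\ref{lem:effLemma} to obtain the two-sided bound. Your write-up simply spells out the same steps in more detail.
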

\begin{proof}
We have $\E(\mymatrix{\lambda}_N)=0$. Thus, Theorem~\ref{lem:a_posteriori_error} and Lemma~\ref{lem:effLemma} yield the assertion.
\end{proof}

\begin{remark*}
The relation~\eqref{eq:psi_rel} holds true if $\mymatrix{p}_{N}\in Q_N$ for a subspace $Q_N\subset Q$ and
\begin{align*}
 (\mymatrix{\mu}_{N}-\mymatrix{\lambda}_{N}, \mymatrix{p}_{N})_{0,\Omega}
 \leq 0 \qquad 
 \forall \, \mymatrix{\mu}_{N} \in \Lambda_N
\end{align*}
with $\Lambda_N:=\Lambda\cap Q_N$. This in particular holds true for the discretizations in Section~\ref{subsec:two_discr_mixed} with $p_T=1$ for all $T\in\mathcal{T}_h$. We refer to \cite{schroder2011error}, where some lower-order finite element discretization spaces are proposed, for a mixed discrete formulation satisfying these assumptions. Indeed, the a~posteriori error estimates introduced in \cite{schroder2011error} coincide with the estimates of Corollary~\ref{cor:simple}.
\end{remark*}



\subsection{A residual based a~posteriori error estimator}
\label{sec:residualErrorEst}

In this section we apply Theorem~\ref{lem:a_posteriori_error} and Theorem~\ref{thm:efficiency} in order to derive a residual-based a postoriori error estimator. For this purpose, let $\myvec{u}_N\in V_N:=V_{hp}$ and $\mymatrix{p}_N\in Q_N:=Q_{hp}$. Moreover, take $\mymatrix{\lambda}_N\in Q$ such that
\begin{align}\label{eq:find_lambda}
  ( \mymatrix{\lambda}_N, \mymatrix{q}_N )_{0,\Omega}
 = \ell(\myvec{v}_N) -  a\big( ( \myvec{u}_N, \mymatrix{p}_N ) , (\myvec{v}_N,\mymatrix{q}_N) \big) \qquad  
 \forall \, (\myvec{v}_N,\mymatrix{q}_N) \in V_N\times Q_N.
\end{align}
Choosing $\myvec{v}_N = \myvec{0}$ in \eqref{eq:find_lambda} we conclude for arbitrary $\mymatrix{q}_N\in Q_N$ 
\begin{align*}
 ( \mymatrix{\lambda}_N, \mymatrix{q}_N )_{0,\Omega} 
 & = -a\big( (\myvec{u}_N, \mymatrix{p}_N) , (\myvec{o}, \mymatrix{q}_N) \big)
 = ( \mymatrix{\sigma}(\myvec{u}_N,\mymatrix{p}_N) - \myspace{H} \, \mymatrix{p}_N, \mymatrix{q}_N )_{0,\Omega} 
 = \big( \operatorname{dev}(\mymatrix{\sigma}\big(\myvec{u}_N,\mymatrix{p}_N) - \myspace{H} \, \mymatrix{p}_N\big), \mymatrix{q}_N \big)_{0,\Omega},
\end{align*}
where we use the definition of $a(\cdot,\cdot)$ and the identity \eqref{eq:identity_deviatoricPart}.
Thus, we may simply choose
\begin{align*}
 \mymatrix{\lambda}_N:= \operatorname{dev}\big( \mymatrix{\sigma}(\myvec{u}_N,\mymatrix{p}_N) - \myspace{H} \, \mymatrix{p}_N \big).
\end{align*}
Alternatively, if one has a solution $(\myvec{u}_N,\mymatrix{p}_N,\mymatrix{\lambda}_N)\in V_N\times Q_N\times Q_N$ of a discretization of the mixed problem \eqref{eq:mixed_variationalF} at hand (as, for instance, in \eqref{eq:discrete_mixed_variationalF}) one can directly use this discrete $\mymatrix{\lambda}_N$.

\begin{lemma}\label{lem:galerkinO}
 For an arbitrary $\myvec{v}_N\in V_N$ it holds
 \begin{align*}
 \Vert ( \myvec{u}^*-\myvec{u}_N, \mymatrix{p}^*-\mymatrix{p}_N ) \Vert^2
 &\lesssim \big| \ell ( \myvec{u}^*-\myvec{u}_N-\myvec{v}_N ) 
 - \big( \mymatrix{\sigma}( \myvec{u}_N, \mymatrix{p}_N ), \mymatrix{\varepsilon}( \myvec{u}^* - \myvec{u}_N - \myvec{v}_N ) \big)_{0,\Omega} \big| \\
 & \qquad + \big\Vert \operatorname{dev}\big(\mymatrix{\sigma} ( \myvec{u}_N,\mymatrix{p}_N ) - \myspace{H} \, \mymatrix{p}_N \big) - \mymatrix{\lambda}_N \big\Vert_{0,\Omega}^2.
\end{align*}
\end{lemma}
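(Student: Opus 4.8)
The plan is to estimate $\Vert(\myvec{u}^*-\myvec{u}_N,\mymatrix{p}^*-\mymatrix{p}_N)\Vert^2$ by exploiting the ellipticity of $a(\cdot,\cdot)$ on the error $(\myvec{e}_u,\myvec{e}_p) := (\myvec{u}^*-\myvec{u}_N,\mymatrix{p}^*-\mymatrix{p}_N)$, and then rewriting $a\big((\myvec{e}_u,\myvec{e}_p),(\myvec{e}_u,\myvec{e}_p)\big)$ using the defining equation \eqref{eq:additional_problem} for $(\myvec{u}^*,\mymatrix{p}^*)$. Concretely, I would start from
\begin{align*}
 \alpha \, \Vert (\myvec{e}_u,\myvec{e}_p) \Vert^2
 \leq a\big( (\myvec{e}_u,\myvec{e}_p), (\myvec{e}_u,\myvec{e}_p) \big)
 = a\big( (\myvec{u}^*,\mymatrix{p}^*), (\myvec{e}_u,\myvec{e}_p) \big) - a\big( (\myvec{u}_N,\mymatrix{p}_N), (\myvec{e}_u,\myvec{e}_p) \big),
\end{align*}
and apply \eqref{eq:additional_problem} with the test pair $(\myvec{e}_u,\myvec{e}_p)$ to replace the first term by $\ell(\myvec{e}_u) - (\mymatrix{\lambda}_N,\myvec{e}_p)_{0,\Omega}$. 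Expanding $a\big((\myvec{u}_N,\mymatrix{p}_N),(\myvec{e}_u,\myvec{e}_p)\big)$ via its definition \eqref{eq:defi_bilinearF_a} gives $\big(\mymatrix{\sigma}(\myvec{u}_N,\mymatrix{p}_N),\mymatrix{\varepsilon}(\myvec{e}_u)-\myvec{e}_p\big)_{0,\Omega} + (\myspace{H}\,\mymatrix{p}_N,\myvec{e}_p)_{0,\Omega}$, so the whole quantity collapses to
\begin{align*}
 \ell(\myvec{e}_u) - \big(\mymatrix{\sigma}(\myvec{u}_N,\mymatrix{p}_N),\mymatrix{\varepsilon}(\myvec{e}_u)\big)_{0,\Omega}
 + \big( \mymatrix{\sigma}(\myvec{u}_N,\mymatrix{p}_N) - \myspace{H}\,\mymatrix{p}_N - \mymatrix{\lambda}_N, \myvec{e}_p \big)_{0,\Omega}.
\end{align*}

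The second main idea is Galerkin orthogonality in the displacement component. Using \eqref{eq:find_lambda} (equivalently, the defining property of $\mymatrix{\lambda}_N$) with an arbitrary test function $(\myvec{v}_N,\myvec{0})\in V_N\times Q_N$, one has $\ell(\myvec{v}_N) = \big(\mymatrix{\sigma}(\myvec{u}_N,\mymatrix{p}_N),\mymatrix{\varepsilon}(\myvec{v}_N)\big)_{0,\Omega}$, so the displacement-residual term $\ell(\myvec{e}_u) - \big(\mymatrix{\sigma}(\myvec{u}_N,\mymatrix{p}_N),\mymatrix{\varepsilon}(\myvec{e}_u)\big)_{0,\Omega}$ may be replaced by the same expression evaluated at $\myvec{e}_u - \myvec{v}_N$ for any $\myvec{v}_N\in V_N$; this yields the first term on the right-hand side of the claimed bound (up to taking absolute values). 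For the plastic-strain term, I would use the identity $(\mymatrix{\mu},\myvec{e}_p)_{0,\Omega} = (\operatorname{dev}(\mymatrix{\mu}),\myvec{e}_p)_{0,\Omega}$ from \eqref{eq:identity_deviatoricPart}, valid since $\myvec{e}_p\in Q$, to write $\big(\mymatrix{\sigma}(\myvec{u}_N,\mymatrix{p}_N)-\myspace{H}\,\mymatrix{p}_N-\mymatrix{\lambda}_N,\myvec{e}_p\big)_{0,\Omega} = \big(\operatorname{dev}(\mymatrix{\sigma}(\myvec{u}_N,\mymatrix{p}_N)-\myspace{H}\,\mymatrix{p}_N)-\mymatrix{\lambda}_N,\myvec{e}_p\big)_{0,\Omega}$ (using $\operatorname{dev}(\mymatrix{\lambda}_N)=\mymatrix{\lambda}_N$ only if $\mymatrix{\lambda}_N\in Q$; otherwise one keeps $\mymatrix{\lambda}_N$ as is and notes it pairs with $\myvec{e}_p$ the same way), then bound it by Cauchy--Schwarz as $\big\Vert \operatorname{dev}(\mymatrix{\sigma}(\myvec{u}_N,\mymatrix{p}_N)-\myspace{H}\,\mymatrix{p}_N)-\mymatrix{\lambda}_N\big\Vert_{0,\Omega}\,\Vert\myvec{e}_p\Vert_{0,\Omega}$.

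Finally, I would absorb the factor $\Vert(\myvec{e}_u,\myvec{e}_p)\Vert$ (which appears linearly on the right after the above manipulations, since $\Vert\myvec{e}_p\Vert_{0,\Omega}\le\Vert(\myvec{e}_u,\myvec{e}_p)\Vert$) into the left-hand side by Young's inequality: writing the displacement-residual term as $R_u(\myvec{e}_u-\myvec{v}_N)$ where $R_u$ is a bounded functional with $|R_u(\myvec{w})|$ bounded by (something) times $\Vert\myvec{w}\Vert_{1,\Omega}$, we get $\alpha\Vert(\myvec{e}_u,\myvec{e}_p)\Vert^2 \lesssim |R_u(\myvec{e}_u-\myvec{v}_N)| + \delta\Vert(\myvec{e}_u,\myvec{e}_p)\Vert^2 + C_\delta\big\Vert\operatorname{dev}(\cdots)-\mymatrix{\lambda}_N\big\Vert_{0,\Omega}^2$, and choosing $\delta$ small gives the result. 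The main obstacle I anticipate is the bookkeeping around the displacement-residual term: one must be careful that after subtracting the Galerkin-orthogonal $\myvec{v}_N$, the remaining functional is still applied to $\myvec{e}_u-\myvec{v}_N$ (not $\myvec{e}_u$), and that the estimate is stated with the absolute value so that no sign information is needed; also one should verify that the term $(\myspace{H}\,\mymatrix{p}_N,\myvec{e}_p)_{0,\Omega}$ indeed combines cleanly with $(\mymatrix{\sigma}(\myvec{u}_N,\mymatrix{p}_N),-\myvec{e}_p)_{0,\Omega}$ and the $-(\mymatrix{\lambda}_N,\myvec{e}_p)_{0,\Omega}$ from the auxiliary problem into exactly $-\big(\mymatrix{\sigma}(\myvec{u}_N,\mymatrix{p}_N)-\myspace{H}\,\mymatrix{p}_N-\mymatrix{\lambda}_N,\myvec{e}_p\big)_{0,\Omega}$, which is what produces the clean $\operatorname{dev}$-residual after applying \eqref{eq:identity_deviatoricPart}.
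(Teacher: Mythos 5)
Your proposal is correct and follows essentially the same route as the paper's proof: ellipticity of $a(\cdot,\cdot)$, insertion of the auxiliary problem \eqref{eq:additional_problem}, the Galerkin orthogonality coming from \eqref{eq:find_lambda} with $\mymatrix{q}_N=\mymatrix{0}$, the deviatoric identity \eqref{eq:identity_deviatoricPart}, and a Cauchy--Schwarz/Young absorption of the $\Vert\mymatrix{p}^*-\mymatrix{p}_N\Vert_{0,\Omega}$ factor. The only (immaterial) difference is that you invoke the orthogonality after expanding the residual rather than subtracting $(\myvec{v}_N,\mymatrix{0})$ inside the bilinear form first, and the stray minus sign in your closing remark on the $\operatorname{dev}$-term contradicts your (correct) displayed computation but is harmless since that term is bounded in absolute value anyway.
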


\begin{proof}
Subtracting \eqref{eq:find_lambda} from the auxiliary problem \eqref{eq:additional_problem} (choosing $(\myvec{v}_N,\mymatrix{0})$ as test function in both cases) yields
\begin{align} \label{eq:GalOrtho}
 a\big( ( \myvec{u}^*-\myvec{u}_N, \mymatrix{p}^*-\mymatrix{p}_N ), (\myvec{v}_N, \mymatrix{0}) \big) 
 = 0.
\end{align}
Hence, exploiting the ellipticity of $a(\cdot,\cdot)$ gives
\begin{align*}
 \alpha \, \Vert ( \myvec{u}^*-\myvec{u}_N, \mymatrix{p}^*-\mymatrix{p}_N ) \Vert^2
 &\leq a\big( ( \myvec{u}^*-\myvec{u}_N, \mymatrix{p}^*-\mymatrix{p}_N ), ( \myvec{u}^*-\myvec{u}_N, \mymatrix{p}^*-\mymatrix{p}_N) \big) \\
 &= a\big( ( \myvec{u}^*-\myvec{u}_N, \mymatrix{p}^*-\mymatrix{p}_N ), ( \myvec{u}^*-\myvec{u}_N-\myvec{v}_N, \mymatrix{p}^*-\mymatrix{p}_N ) \big) \\
 &= a\big( ( \myvec{u}^*, \mymatrix{p}^* ), ( \myvec{u}^*-\myvec{u}_N-\myvec{v}_N, \mymatrix{p}^*-\mymatrix{p}_N) \big) - a\big( ( \myvec{u}_N, \mymatrix{p}_N ), ( \myvec{u}^*-\myvec{u}_N-\myvec{v}_N, \mymatrix{p}^*-\mymatrix{p}_N) \big),
\end{align*}
from which we deduce
\begin{align*}
 &\alpha \, \Vert ( \myvec{u}^*-\myvec{u}_N, \mymatrix{p}^*-\mymatrix{p}_N ) \Vert^2 \\
 & \qquad \leq \ell ( \myvec{u}^*-\myvec{u}_N-\myvec{v}_N )
 - \big( \mymatrix{\sigma}( \myvec{u}_N, \mymatrix{p}_N ), \mymatrix{\varepsilon}( \myvec{u}^* - \myvec{u}_N - \myvec{v}_N ) \big)_{0,\Omega} + \left(\operatorname{dev}\big(\mymatrix{\sigma} ( \myvec{u}_N,\mymatrix{p}_N ) - \myspace{H} \, \mymatrix{p}_N \big) - \mymatrix{\lambda}_N, \mymatrix{p}^*-\mymatrix{p}_N\right)_{0,\Omega}
\end{align*}
due to \eqref{eq:additional_problem}, the definition of $a(\cdot,\cdot)$ and the identity \eqref{eq:identity_deviatoricPart}. Hence, by using the inequality from Cauchy-Schwarz's and Young's inequality with some $0<\varepsilon<\alpha$ we obtain
\begin{align*}
\alpha \, \Vert ( \myvec{u}^*-\myvec{u}_N, \mymatrix{p}^*-\mymatrix{p}_N ) \Vert^2
 &\leq \ell ( \myvec{u}^*-\myvec{u}_N-\myvec{v}_N ) 
 - \big( \mymatrix{\sigma}( \myvec{u}_N, \mymatrix{p}_N ), \mymatrix{\varepsilon}( \myvec{u}^* - \myvec{u}_N - \myvec{v}_N ) \big)_{0,\Omega} \\
 & \qquad + \frac{1}{4\varepsilon} \, \big\Vert \operatorname{dev}\big(\mymatrix{\sigma} ( \myvec{u}_N,\mymatrix{p}_N ) - \myspace{H} \, \mymatrix{p}_N \big) - \mymatrix{\lambda}_N \big\Vert_{0,\Omega}^2 + \varepsilon \, \big\Vert ( \myvec{u}^*-\myvec{u}_N, \mymatrix{p}^*-\mymatrix{p}_N ) \big\Vert^2.
\end{align*}
\vspace*{0.5cm}
This finally gives
\begin{align*}
 \Vert ( \myvec{u}^*-\myvec{u}_N, \mymatrix{p}^*-\mymatrix{p}_N ) \Vert^2 
 &\leq \frac{1}{\alpha-\varepsilon} \left(\ell ( \myvec{u}^*-\myvec{u}_N-\myvec{v}_N ) 
 - \big( \mymatrix{\sigma}( \myvec{u}_N, \mymatrix{p}_N ), \mymatrix{\varepsilon}( \myvec{u}^* - \myvec{u}_N - \myvec{v}_N ) \big)_{0,\Omega}\right) \\
 & \qquad + \frac{1}{4\varepsilon(\alpha-\varepsilon)} \, \big\Vert \operatorname{dev}\big(\mymatrix{\sigma} ( \myvec{u}_N,\mymatrix{p}_N ) - \myspace{H} \, \mymatrix{p}_N \big) - \mymatrix{\lambda}_N \big\Vert_{0,\Omega}^2,
\end{align*}
which completes the argument.
\end{proof}

In the following we restrict ourselves to the two-dimensional case, i.e.~$d=2$ (as we refer to arguments known from a~posteriori error control specifically for this case in Theorem~\ref{thm:residualErrorEst} and Lemma~\ref{lem:localEta}). For each element $T\in\mathcal{T}_h$ we denote the sets of edges of $T$ which lie in the interior of $\Omega$ and on the Neumann-boundary $\Gamma_N$ by $\mathcal{E}_T^{I}$ and $\mathcal{E}_T^N$, respectively. In addition to the local element size $h_T$ and polynomial degree $p_T$ of an element $T\in\mathcal{T}_h$, let us denote the local edge size and the polynomial degree of its edges $e\in \mathcal{E}_T^{I} \cup \mathcal{E}_T^{N}$ by $h_e$ and $p_e$, respectively.
Moreover, let $\myvec{n}_e$ be a unit normal of the edge $e$, which coincides with the outer unit normal $\myvec{n}$ on the Neumann-boundary $\Gamma_N$. Finally, let $\llbracket \cdot \rrbracket$ denote the usual jump function. Herewith, we introduce the local error contributions for $T\in\mathcal{T}_h$
\begin{align}\label{eq:eta_T}
 \eta_T^2 
 := \frac{h_T^2}{p_T^2} \, \big\Vert \myvec{f}_N + \operatorname{div}\mymatrix{\sigma}( \myvec{u}_N,\mymatrix{p}_N ) \big\Vert_{0,T}^2 
 + \sum_{e \in \mathcal{E}_T^{I}}\frac{h_e}{2p_e} \, \big\Vert  \big\llbracket \mymatrix{\sigma}( \myvec{u}_N,\mymatrix{p}_N ) \, \myvec{n}_e \big\rrbracket \big\Vert_{0,e}^2 
 + \sum_{e \in \mathcal{E}_T^{N}}  \frac{h_e}{p_e} \, \big\Vert \mymatrix{\sigma} ( \myvec{u}_N,\mymatrix{p}_N ) \, \myvec{n}_e - \myvec{g}_N \big\Vert_{0,e}^2 
\end{align}
as well as the data oscillation terms
\begin{align*}
 \osc_T^2 
 := \frac{h_T^2}{p_T^2} \, \Vert \myvec{f} - \myvec{f}_N  \Vert_{0,T}^2 
 + \sum_{e \in \mathcal{E}_T^{N}}  \frac{h_e}{p_e} \, \Vert \myvec{g} - \myvec{g}_N\Vert_{0,e}^2,
\end{align*}
where $\myvec{f}_N \in [W_{hp}]^d$ and $\myvec{g} \in [W_{hp}|_{\Gamma_N}]^d$ represent the $L^2$-projection of $\myvec{f}\in L^2(\Omega,\mathbb{R}^d)$ and $\myvec{g}\in L^2(\Gamma_N,\mathbb{R}^d)$, respectively. For $\mymatrix{\mu}\in\Lambda$ we write
\begin{align}\label{eq:eta_mu}
 \eta^2(\mymatrix{\mu}) := \sum_{T \in \mathcal{T}_h} \eta^2_T(\mymatrix{\mu}), \qquad
 \osc^2 :=\sum_{T \in \mathcal{T}_h}  \osc_T^2
\end{align}
with the local error contributions
\begin{align*}
 \eta^2_T(\mymatrix{\mu}) := \eta_T^2+ \big\Vert \operatorname{dev} \big( \mymatrix{\sigma} (\myvec{u}_N,\mymatrix{p}_N ) - \myspace{H} \mymatrix{p}_N \big) - \mymatrix{\lambda}_N \big\Vert_{T,\Omega}^2 
 + \E_T(\mymatrix{\mu})
\end{align*}
and the local plasticity error contributions
 \begin{align}\label{eq:apost_localErrorPart}
  \E_T(\mymatrix{\mu}) 
  := \Vert \mymatrix{\mu} - \mymatrix{\lambda}_N\Vert_{0,T}^2 + (\sigma_y, \vert \mymatrix{p}_N\vert_F )_{0,T} - (\mymatrix{\mu} , \mymatrix{p}_N)_{0,T}, 
  \qquad T\in\mathcal{T}.
 \end{align}
Obviously, there holds
\begin{align*}
 \E(\mymatrix{\mu}) = \sum_{T\in\mathcal{T}_h} \E_T(\mymatrix{\mu}). 
\end{align*}
Herewith, we obtain the following reliability estimate for the error estimator $\eta^2(\mymatrix{\mu})$:
\begin{theorem} \label{thm:residualErrorEst}
For every $\mymatrix{\mu}\in\Lambda$ there holds
\begin{align} \label{eq:residual_est_reliable}
 \Vert ( \myvec{u}-\myvec{u}_N, \mymatrix{p}-\mymatrix{p}_N ) \Vert^2 + \Vert \mymatrix{\lambda}-\mymatrix{\lambda}_N \Vert_{0,\Omega}^2  
 &\lesssim  \eta^2(\mymatrix{\mu}) + \osc^2.
\end{align}
\end{theorem}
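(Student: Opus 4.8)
The idea is to chain the abstract bound of Theorem~\ref{lem:a_posteriori_error} with the auxiliary-problem estimate of Lemma~\ref{lem:galerkinO} and a standard $hp$-residual argument for linear elasticity. By Theorem~\ref{lem:a_posteriori_error} it suffices to show
\[
 \Vert (\myvec{u}^*-\myvec{u}_N, \mymatrix{p}^*-\mymatrix{p}_N)\Vert^2 + \E(\mymatrix{\mu}) \lesssim \eta^2(\mymatrix{\mu}) + \osc^2 .
\]
The plasticity part is immediate: by construction $\E(\mymatrix{\mu}) = \sum_{T\in\mathcal{T}_h}\E_T(\mymatrix{\mu})$ and each $\E_T(\mymatrix{\mu})$ is one of the summands defining $\eta_T^2(\mymatrix{\mu})$, so $\E(\mymatrix{\mu})\le\eta^2(\mymatrix{\mu})$. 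It therefore remains to estimate the auxiliary error $\Vert (\myvec{u}^*-\myvec{u}_N, \mymatrix{p}^*-\mymatrix{p}_N)\Vert^2$.

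For this I would invoke Lemma~\ref{lem:galerkinO} with the particular choice $\myvec{v}_N := I_{hp}(\myvec{u}^*-\myvec{u}_N)\in V_N$, where $I_{hp}$ is a Cl\'ement/Scott--Zhang-type $hp$-quasi-interpolation operator that preserves the homogeneous Dirichlet condition on $\Gamma_D$; the availability of this operator is the reason for restricting to $d=2$. The consistency term $\big\Vert \operatorname{dev}(\mymatrix{\sigma}(\myvec{u}_N,\mymatrix{p}_N)-\myspace{H}\,\mymatrix{p}_N)-\mymatrix{\lambda}_N\big\Vert_{0,\Omega}^2$ appearing in Lemma~\ref{lem:galerkinO} equals $\sum_{T\in\mathcal{T}_h}\big\Vert \operatorname{dev}(\mymatrix{\sigma}(\myvec{u}_N,\mymatrix{p}_N)-\myspace{H}\,\mymatrix{p}_N)-\mymatrix{\lambda}_N\big\Vert_{0,T}^2$, whose summands are again part of $\eta_T^2(\mymatrix{\mu})$; hence it is already absorbed. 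What remains is the residual functional
\[
 R(\myvec{w}) := \ell(\myvec{w}) - \big(\mymatrix{\sigma}(\myvec{u}_N,\mymatrix{p}_N),\mymatrix{\varepsilon}(\myvec{w})\big)_{0,\Omega}, \qquad \myvec{w} := (\myvec{u}^*-\myvec{u}_N) - I_{hp}(\myvec{u}^*-\myvec{u}_N).
\]

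The residual is treated in the usual way. Integrating by parts elementwise (using $\mymatrix{\sigma}^{\top}=\mymatrix{\sigma}$) expresses $R(\myvec{w})$ through the element residual $\myvec{f}+\operatorname{div}\mymatrix{\sigma}(\myvec{u}_N,\mymatrix{p}_N)$ on each $T$, the normal jumps $\big\llbracket\mymatrix{\sigma}(\myvec{u}_N,\mymatrix{p}_N)\,\myvec{n}_e\big\rrbracket$ on $e\in\mathcal{E}_T^{I}$, and the Neumann residual $\myvec{g}-\mymatrix{\sigma}(\myvec{u}_N,\mymatrix{p}_N)\,\myvec{n}_e$ on $e\in\mathcal{E}_T^{N}$, each tested against $\myvec{w}$. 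Writing $\myvec{f}=\myvec{f}_N+(\myvec{f}-\myvec{f}_N)$ and $\myvec{g}=\myvec{g}_N+(\myvec{g}-\myvec{g}_N)$ splits these contributions into the $\eta_T$-terms plus the oscillation terms $\osc_T$. Cauchy--Schwarz on every element and edge, the $hp$-approximation estimates $\Vert\myvec{w}\Vert_{0,T}\lesssim (h_T/p_T)\,\Vert\myvec{u}^*-\myvec{u}_N\Vert_{1,\omega_T}$ and $\Vert\myvec{w}\Vert_{0,e}\lesssim (h_e/p_e)^{1/2}\,\Vert\myvec{u}^*-\myvec{u}_N\Vert_{1,\omega_e}$ on the element/edge patches $\omega_T,\omega_e$, and a final Cauchy--Schwarz over the elements using the finite overlap of the patches give
\[
 |R(\myvec{w})| \lesssim \Big(\textstyle\sum_{T\in\mathcal{T}_h}(\eta_T^2+\osc_T^2)\Big)^{1/2}\,\Vert\myvec{u}^*-\myvec{u}_N\Vert_{1,\Omega} .
\]
Since $\Vert\myvec{u}^*-\myvec{u}_N\Vert_{1,\Omega}\le\Vert(\myvec{u}^*-\myvec{u}_N,\mymatrix{p}^*-\mymatrix{p}_N)\Vert$, inserting this into Lemma~\ref{lem:galerkinO} and applying Young's inequality absorbs the auxiliary norm and yields $\Vert(\myvec{u}^*-\myvec{u}_N,\mymatrix{p}^*-\mymatrix{p}_N)\Vert^2\lesssim \sum_{T}\eta_T^2 + \osc^2 + \big\Vert\operatorname{dev}(\mymatrix{\sigma}(\myvec{u}_N,\mymatrix{p}_N)-\myspace{H}\,\mymatrix{p}_N)-\mymatrix{\lambda}_N\big\Vert_{0,\Omega}^2$. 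Recombining with the first step and regrouping the summands into $\eta^2(\mymatrix{\mu})$ and $\osc^2$ proves the claim.

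I expect the main obstacle to be the $hp$-quasi-interpolation step: one needs an operator $I_{hp}:V\to V_{hp}$ that is $H^1$-stable, respects the Dirichlet constraint on $\Gamma_D$, and satisfies the localized $L^2$ element and edge approximation bounds with exactly the weights $h_T/p_T$ and $(h_e/p_e)^{1/2}$ on the locally quasi-uniform, shape-regular quadrilateral meshes with comparable neighboring polynomial degrees considered here; this is precisely where the two-dimensional restriction enters. The remaining steps (elementwise integration by parts, splitting off the data oscillation, the finite overlap of patches, and the final Young's inequality) are routine.
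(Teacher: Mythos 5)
Your proposal is correct and follows essentially the same route as the paper: it combines Theorem~\ref{lem:a_posteriori_error} with Lemma~\ref{lem:galerkinO}, choosing $\myvec{v}_N$ as an $hp$-Cl\'ement-type quasi-interpolant of $\myvec{u}^*-\myvec{u}_N$ and applying the standard residual arguments (elementwise integration by parts, Cauchy--Schwarz, the Melenk-type $hp$-interpolation estimates, absorption by Young's inequality). The only cosmetic difference is that you split off the data oscillations before estimating, while the paper keeps $\myvec{f},\myvec{g}$ and splits afterwards via the triangle inequality.
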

\begin{proof}
Choosing $\myvec{v}_N$ to be the Clement interpolation of $\myvec{u}^*-\myvec{u}_N$ (as, for instance, introduced in \cite{Melenk2005}) we conclude from Lemma~\ref{lem:galerkinO} and the usual arguments in the derivation of reliable residual-based a~posteriori error estimators (applying elementwise integration by parts and using Cauchy-Schwarz's inequality, see, for instance, \cite[Prop.~4.1]{Melenk2005} for the details)
\pagebreak
\begin{align*}
 \Vert \big( \myvec{u}^*-\myvec{u}_N, \mymatrix{p}^*-\mymatrix{p}_N \big) \Vert^2 
 & \lesssim \sum_{T \in \mathcal{T}_h} \frac{h_T^2}{p_T^2} \, \big\Vert \myvec{f} + \operatorname{div}\mymatrix{\sigma} ( \myvec{u}_N,\mymatrix{p}_N ) \big\Vert_{0,T}^2 + \sum_{e\in \mathcal{E}_T^{I}} \frac{h_e}{2p_e} \, \big\Vert \big\llbracket \mymatrix{\sigma} ( \myvec{u}_N,\mymatrix{p}_N ) \, \myvec{n}_e \big\rrbracket \big\Vert_{0,e}^2 \\
 & \qquad + \sum_{e\in \mathcal{E}_T^N} \frac{h_e}{p_e} \, \big\Vert \mymatrix{\sigma}( \myvec{u}_N,\mymatrix{p}_N ) \, \myvec{n}_e - \myvec{g} \big\Vert_{0,e}^2 
 + \big\Vert \operatorname{dev}\big( \mymatrix{\sigma}( \myvec{u}_N,\mymatrix{p}_N ) - \myspace{H} \mymatrix{p}_N\big) - \mymatrix{\lambda}_N \big\Vert_{0,\Omega}^2.
\end{align*}
Hence, the estimate \eqref{eq:residual_est_reliable} follows with the triangle inequality and Theorem~\ref{lem:a_posteriori_error}.
\end{proof}

An efficiency estimate for the error estimator $\eta^2(\mymatrix{\mu})$, where we unfortunately cannot avoid its suboptimality (as we make use of Lemma~\ref{lem:optimal_lambda} and Theorem~\ref{thm:efficiency}), can be derived as follows.

\begin{lemma}\label{lem:localDev}
There holds
\begin{align*}
 \big\Vert \operatorname{dev}\big(\mymatrix{\sigma} ( \myvec{u}_N,\mymatrix{p}_N ) - \myspace{H} \, \mymatrix{p}_N \big) - \mymatrix{\lambda}_N \big\Vert_{0,\Omega}^2 
 \lesssim \Vert ( \myvec{u}-\myvec{u}_N, \mymatrix{p}-\mymatrix{p}_N ) \Vert^2 + \Vert \mymatrix{\lambda} - \mymatrix{\lambda}_N \Vert_{0,\Omega}^2.
\end{align*}
\end{lemma}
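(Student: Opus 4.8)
The plan is to reduce the claim to the continuous identity $\mymatrix{\lambda} = \operatorname{dev}\big(\mymatrix{\sigma}(\myvec{u},\mymatrix{p}) - \myspace{H}\,\mymatrix{p}\big)$ from \eqref{eq:repres_continuousLambda}, combined with the (uniform) boundedness of the linear maps $\operatorname{dev}(\cdot)$, $\myspace{C}$ and $\myspace{H}$. First I would insert this identity and split
\[
 \operatorname{dev}\big(\mymatrix{\sigma}(\myvec{u}_N,\mymatrix{p}_N) - \myspace{H}\,\mymatrix{p}_N\big) - \mymatrix{\lambda}_N
 = \operatorname{dev}\big( (\mymatrix{\sigma}(\myvec{u}_N,\mymatrix{p}_N) - \myspace{H}\,\mymatrix{p}_N) - (\mymatrix{\sigma}(\myvec{u},\mymatrix{p}) - \myspace{H}\,\mymatrix{p}) \big) + (\mymatrix{\lambda} - \mymatrix{\lambda}_N),
\]
so that by the triangle inequality in $L^2(\Omega,\myspace{R}^{d\times d})$ it remains to bound the first summand by $\Vert(\myvec{u}-\myvec{u}_N,\mymatrix{p}-\mymatrix{p}_N)\Vert$.

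For this summand I would use that, pointwise, $\operatorname{dev}(\cdot)$ is the orthogonal projection onto the trace-free matrices with respect to the Frobenius inner product, hence a contraction, $|\operatorname{dev}(\mymatrix{\tau})|_F \leq |\mymatrix{\tau}|_F$ for all $\mymatrix{\tau}\in\myspace{R}^{d\times d}$; integrating over $\Omega$ gives $\Vert\operatorname{dev}(\cdot)\Vert_{0,\Omega}\leq\Vert\cdot\Vert_{0,\Omega}$. Then, using $\mymatrix{\sigma}(\myvec{v},\mymatrix{q}) = \myspace{C}\,(\mymatrix{\varepsilon}(\myvec{v}) - \mymatrix{q})$, I would rewrite
\[
 (\mymatrix{\sigma}(\myvec{u}_N,\mymatrix{p}_N) - \myspace{H}\,\mymatrix{p}_N) - (\mymatrix{\sigma}(\myvec{u},\mymatrix{p}) - \myspace{H}\,\mymatrix{p})
 = \myspace{C}\big( \mymatrix{\varepsilon}(\myvec{u}_N - \myvec{u}) - (\mymatrix{p}_N - \mymatrix{p}) \big) - \myspace{H}\,(\mymatrix{p}_N - \mymatrix{p})
\]
and invoke the uniform boundedness of $\myspace{C}$ and $\myspace{H}$ to obtain $\Vert\cdots\Vert_{0,\Omega}\lesssim \Vert\mymatrix{\varepsilon}(\myvec{u}-\myvec{u}_N)\Vert_{0,\Omega} + \Vert\mymatrix{p}-\mymatrix{p}_N\Vert_{0,\Omega}$. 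Since $|\myvec{v}|_{1,\Omega}^2 = (\mymatrix{\varepsilon}(\myvec{v}),\mymatrix{\varepsilon}(\myvec{v}))_{0,\Omega}$ and $\Vert(\myvec{v},\mymatrix{q})\Vert^2 = \Vert\myvec{v}\Vert_{1,\Omega}^2 + \Vert\mymatrix{q}\Vert_{0,\Omega}^2$, this is bounded by $\Vert(\myvec{u}-\myvec{u}_N,\mymatrix{p}-\mymatrix{p}_N)\Vert$; squaring and absorbing constants yields the asserted estimate.

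I do not expect a genuine obstacle here: the statement is essentially the continuity of the maps involved together with the exact representation of $\mymatrix{\lambda}$. The only points requiring a little care are the direction of the $\operatorname{dev}$ inequality (it must be used as a contraction, which is exactly what is available) and the observation that the hidden constant depends only on the uniform ellipticity/boundedness bounds of $\myspace{C}$ and $\myspace{H}$, hence is independent of $h$ and $p$, as demanded by the $\lesssim$ notation.
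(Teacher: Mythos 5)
Your argument is correct and follows essentially the same route as the paper's proof: insert the representation $\mymatrix{\lambda}=\operatorname{dev}\big(\mymatrix{\sigma}(\myvec{u},\mymatrix{p})-\myspace{H}\,\mymatrix{p}\big)$ from \eqref{eq:repres_continuousLambda}, split off $\mymatrix{\lambda}-\mymatrix{\lambda}_N$ by the triangle inequality, and bound the remaining term via the (uniform) boundedness of $\operatorname{dev}$, $\myspace{C}$ and $\myspace{H}$ together with the definition of $\Vert(\cdot,\cdot)\Vert$. Your explicit observation that $\operatorname{dev}$ is a Frobenius-orthogonal projection, hence a contraction, is a slightly more detailed justification of a step the paper leaves implicit, but it is the same proof.
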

\begin{proof}
Applying \eqref{eq:repres_continuousLambda}, the triangle inequality and the uniform boundedness of $\myspace{C}$ and $\myspace{H}$ yield
\begin{align*}
    \big\Vert \operatorname{dev}\big(\mymatrix{\sigma} ( \myvec{u}_N,\mymatrix{p}_N ) - \myspace{H} \, \mymatrix{p}_N \big) - \mymatrix{\lambda}_N \big\Vert_{0,\Omega}^2 
    & = \big\Vert \operatorname{dev}\big( \mymatrix{\sigma}(\myvec{u}_N,\mymatrix{p}_N) - \myspace{H} \, \mymatrix{p}_N \big) - \operatorname{dev}\big( \mymatrix{\sigma}(\myvec{u},\mymatrix{p}) - \myspace{H} \, \mymatrix{p} \big) + \mymatrix{\lambda} - \mymatrix{\lambda}_N \big\Vert_{0,\Omega}^2 \\
   & \lesssim \Vert \mymatrix{\sigma}(\myvec{u}-\myvec{u}_N,\mymatrix{p}-\mymatrix{p}_N)\Vert_{0,\Omega}^2 + \Vert  \myspace{H} \, (\mymatrix{p} -  \mymatrix{p}_N ) \Vert_{0,\Omega}^2 + \Vert \mymatrix{\lambda} - \mymatrix{\lambda}_N \Vert_{0,\Omega}^2\\
  & \lesssim \Vert ( \myvec{u}-\myvec{u}_N, \mymatrix{p}-\mymatrix{p}_N ) \Vert^2 + \Vert \mymatrix{\lambda} - \mymatrix{\lambda}_N \Vert_{0,\Omega}^2,
\end{align*}
which completes the argument.
\end{proof}

\begin{lemma}\label{lem:localEta}
Let the elasticity tensor $\myspace{C}$ be constant and let the transformations $\myvec{F}_T(\cdot)$ be affine for $T\in\mathcal{T}_h$. For any $\varepsilon > 0$ there holds
\begin{align}\label{eq:residual_est_effi3}
 \eta_T^2 
 \lesssim_{\varepsilon} p_T^{1+2\varepsilon} \left( p_T \, \Vert \myvec{u}^* - \myvec{u}_N\Vert_{1,\omega_T}^2 + p_T \, \Vert \mymatrix{p}^* - \mymatrix{p}_N\Vert_{0,\omega_T}^2 + p_T^{2\varepsilon} \sum_{T' \in \omega_T} \osc^2_{T'} \right) \qquad 
 \forall \, T \in \mathcal{T}_h,
\end{align}
where $\omega_T := \big\lbrace T' \in \mathcal{T}_h \; ; \; T \text{ and } T' \text{ share at least one edge} \big\rbrace$. Furthermore, \eqref{eq:residual_est_effi3} remains true if $(\myvec{u}^*,\mymatrix{p}^*)$ is exchanged for $(\myvec{u},\mymatrix{p})$.
\end{lemma}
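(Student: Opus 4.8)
The plan is to run the classical residual/bubble-function efficiency argument in its $hp$-version, following \cite{Melenk2005}; I only sketch the main steps and indicate where the powers of $p_T$ come from. The hypotheses that $\myspace{C}$ is constant and that each $\myvec{F}_T$ is affine enter precisely to guarantee that $\mymatrix{\sigma}(\myvec{u}_N,\mymatrix{p}_N)=\myspace{C}\,(\mymatrix{\varepsilon}(\myvec{u}_N)-\mymatrix{p}_N)$ is elementwise a polynomial of degree at most $p_T$, so that $\operatorname{div}\mymatrix{\sigma}(\myvec{u}_N,\mymatrix{p}_N)|_T$ and the edge traces $\mymatrix{\sigma}(\myvec{u}_N,\mymatrix{p}_N)\,\myvec{n}_e|_e$ are again polynomials of controlled degree; this polynomiality is exactly what the weighted inverse estimates on the reference element require.

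First I would record the residual identity. Testing the auxiliary problem \eqref{eq:additional_problem} with $(\myvec{v},\mymatrix{0})$ for $\myvec{v}\in V$ and inserting the definition of $a(\cdot,\cdot)$ gives $\big(\mymatrix{\sigma}(\myvec{u}^*,\mymatrix{p}^*)-\mymatrix{\sigma}(\myvec{u}_N,\mymatrix{p}_N),\mymatrix{\varepsilon}(\myvec{v})\big)_{0,\Omega}=\ell(\myvec{v})-\big(\mymatrix{\sigma}(\myvec{u}_N,\mymatrix{p}_N),\mymatrix{\varepsilon}(\myvec{v})\big)_{0,\Omega}$; elementwise integration by parts on the right-hand side produces the familiar sum
\[
 \sum_{T\in\mathcal{T}_h}\big(\myvec{f}+\operatorname{div}\mymatrix{\sigma}(\myvec{u}_N,\mymatrix{p}_N),\myvec{v}\big)_{0,T}
 -\sum_{e\in\mathcal{E}_T^I}\big(\llbracket\mymatrix{\sigma}(\myvec{u}_N,\mymatrix{p}_N)\,\myvec{n}_e\rrbracket,\myvec{v}\big)_{0,e}
 +\sum_{e\in\mathcal{E}_T^N}\big(\myvec{g}-\mymatrix{\sigma}(\myvec{u}_N,\mymatrix{p}_N)\,\myvec{n}_e,\myvec{v}\big)_{0,e},
\]
which therefore equals $\big(\mymatrix{\sigma}(\myvec{u}^*-\myvec{u}_N,\mymatrix{p}^*-\mymatrix{p}_N),\mymatrix{\varepsilon}(\myvec{v})\big)_{0,\Omega}$ for every $\myvec{v}\in V$.

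Next I would localise with bubble functions. For the volume term I pick $\myvec{v}=b_T\,\myvec{r}_{N,T}$, with $\myvec{r}_{N,T}:=\myvec{f}_N+\operatorname{div}\mymatrix{\sigma}(\myvec{u}_N,\mymatrix{p}_N)$ and $b_T$ a suitably powered element bubble extended by zero, and use the $hp$ weighted norm equivalence $\|\myvec{r}_{N,T}\|_{0,T}^2\lesssim_\varepsilon p_T^{2\varepsilon}\,(b_T\myvec{r}_{N,T},\myvec{r}_{N,T})_{0,T}$ together with the inverse estimate $\|b_T\myvec{r}_{N,T}\|_{1,T}\lesssim p_T^{2}h_T^{-1}\|\myvec{r}_{N,T}\|_{0,T}$ from \cite{Melenk2005}; swapping $\myvec{f}$ for $\myvec{f}_N$ costs the oscillation $\osc_T$. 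Combined with $\|\mymatrix{\sigma}(\myvec{u}^*-\myvec{u}_N,\mymatrix{p}^*-\mymatrix{p}_N)\|_{0,T}\lesssim\|\myvec{u}^*-\myvec{u}_N\|_{1,T}+\|\mymatrix{p}^*-\mymatrix{p}_N\|_{0,T}$ (boundedness of $\myspace{C}$), this bounds the first summand of $\eta_T^2$ by the right amount. The interior- and Neumann-edge terms are handled analogously with edge bubble functions supported on the (at most two) elements adjacent to $e$ and an $hp$-stable polynomial extension of the (polynomial) edge residual into those elements, reusing the just-obtained volume estimate; summing over the patch $\omega_T$ and absorbing constants yields \eqref{eq:residual_est_effi3}. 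The final assertion is immediate: since $(\myvec{u},\mymatrix{p},\mymatrix{\lambda})$ solves \eqref{eq:mixed_variationalF_01}, testing with $(\myvec{v},\mymatrix{0})$ gives $\big(\mymatrix{\sigma}(\myvec{u},\mymatrix{p}),\mymatrix{\varepsilon}(\myvec{v})\big)_{0,\Omega}=\ell(\myvec{v})$, which is exactly the residual identity above with $(\myvec{u}^*,\mymatrix{p}^*)$ replaced by $(\myvec{u},\mymatrix{p})$, so the whole argument repeats verbatim.

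The \textbf{main obstacle} is the quantitative $p$-bookkeeping: the $hp$ weighted norm equivalences and the polynomial inverse inequalities only hold with an $\varepsilon$-loss, and one must carry these losses through both the element and the edge steps to land precisely on the factors $p_T^{2+2\varepsilon}$ in front of the energy contributions and $p_T^{1+4\varepsilon}$ in front of the oscillations. The second delicate ingredient is the $hp$-stable extension of an edge polynomial into its two adjacent elements with the correct norm behaviour; here one also invokes the assumed comparability of the polynomial degrees of neighbouring elements to replace $\max_{T\ni e}p_T$ by $p_e$ (and likewise $h_e\approx h_T$), so that the edge weights appearing in $\eta_T^2$ come out as written.
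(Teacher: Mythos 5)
Your proposal follows essentially the same route as the paper: both reduce the claim to the observation that testing the auxiliary problem \eqref{eq:additional_problem} (respectively \eqref{eq:mixed_variationalF_01} for the additional claim) with $(\myvec{v},\mymatrix{0})$ shows that $\mymatrix{\sigma}(\myvec{u}^*,\mymatrix{p}^*)$ satisfies the pure equilibrium equation $\big(\mymatrix{\sigma}(\myvec{u}^*,\mymatrix{p}^*),\mymatrix{\varepsilon}(\myvec{v})\big)_{0,\Omega}=\ell(\myvec{v})$, after which the standard $hp$ bubble-function efficiency machinery applies, with the boundedness of $\myspace{C}$ converting the stress error into the stated $H^1$/$L^2$ errors. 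The only difference is that the paper simply cites \cite[Lem.~3.5]{melenk2001residual} for the element/edge bubble estimates and their $\varepsilon$-losses in $p$, whereas you sketch those steps explicitly; your bookkeeping of the resulting powers of $p_T$ is consistent with \eqref{eq:residual_est_effi3}.
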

\begin{proof}
Inserting $(\myvec{v},\mymatrix{0})\in V\times Q$ in \eqref{eq:additional_problem} we obtain
\begin{align*}
 \big( \mymatrix{\sigma}(\myvec{u}^*,\mymatrix{p}^*), \mymatrix{\varepsilon}(\myvec{v})\big)_{0,\Omega} = \ell(\myvec{v})
 \qquad \forall \, \myvec{v}\in V.
\end{align*}
Thereby, \eqref{eq:residual_est_effi3} follows from arguments used in the derivation of efficient residual-based a~posteriori error estimators, see, e.g.~\cite[Lem.~3.5]{melenk2001residual}. The additional claim follows by inserting $(\myvec{v},\mymatrix{0})\in V\times Q$ in \eqref{eq:mixed_variationalF_01} and using the same arguments as before for efficieny estimates.
\end{proof}

\begin{theorem}\label{thm:globalLowerEst}
Under the assumptions of Lemma~\ref{lem:localEta} there holds
\begin{align*}
 \eta^2(\mymatrix{\mu}^*)
 \lesssim_p \Vert (\myvec{u}-\myvec{u}_N, \mymatrix{p}-\mymatrix{p}_N)\Vert^2 + \Vert \mymatrix{\lambda}-\mymatrix{\lambda}_N\Vert_{0,\Omega}^2 + \Vert \mymatrix{p}-\mymatrix{p}_N\Vert_{0,\Omega} + \osc^2.
\end{align*}
\end{theorem}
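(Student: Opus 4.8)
The plan is to estimate $\eta^2(\mymatrix{\mu}^*)$ term by term according to its definition. Summing the local contributions and using $\sum_{T\in\mathcal{T}_h}\E_T(\mymatrix{\mu}^*)=\E(\mymatrix{\mu}^*)$ one obtains
\[
 \eta^2(\mymatrix{\mu}^*) = \sum_{T\in\mathcal{T}_h}\eta_T^2 + \big\Vert \operatorname{dev}\big(\mymatrix{\sigma}(\myvec{u}_N,\mymatrix{p}_N) - \myspace{H}\,\mymatrix{p}_N\big) - \mymatrix{\lambda}_N\big\Vert_{0,\Omega}^2 + \E(\mymatrix{\mu}^*),
\]
so it suffices to bound each of the three summands by the right-hand side of the asserted inequality.

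For the residual part $\sum_T\eta_T^2$ I would apply Lemma~\ref{lem:localEta} in the variant in which $(\myvec{u}^*,\mymatrix{p}^*)$ is replaced by $(\myvec{u},\mymatrix{p})$, with a fixed (arbitrary) choice of $\varepsilon>0$; this is harmless because the claim only asks for a constant depending on $p$. Summing the resulting local bounds over $T\in\mathcal{T}_h$ and exploiting the finite overlap of the patches $\omega_T$ together with the comparability of the local polynomial degrees of neighbouring elements, the patch norms $\Vert\myvec{u}-\myvec{u}_N\Vert_{1,\omega_T}$, $\Vert\mymatrix{p}-\mymatrix{p}_N\Vert_{0,\omega_T}$ and the sums $\sum_{T'\in\omega_T}\osc^2_{T'}$ collapse to the global quantities $\Vert\myvec{u}-\myvec{u}_N\Vert_{1,\Omega}^2$, $\Vert\mymatrix{p}-\mymatrix{p}_N\Vert_{0,\Omega}^2$ and $\osc^2$, yielding $\sum_T\eta_T^2\lesssim_p\Vert(\myvec{u}-\myvec{u}_N,\mymatrix{p}-\mymatrix{p}_N)\Vert^2+\osc^2$. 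The deviatoric residual $\Vert\operatorname{dev}(\mymatrix{\sigma}(\myvec{u}_N,\mymatrix{p}_N)-\myspace{H}\,\mymatrix{p}_N)-\mymatrix{\lambda}_N\Vert_{0,\Omega}^2$ is controlled directly by Lemma~\ref{lem:localDev}, which bounds it by $\Vert(\myvec{u}-\myvec{u}_N,\mymatrix{p}-\mymatrix{p}_N)\Vert^2+\Vert\mymatrix{\lambda}-\mymatrix{\lambda}_N\Vert_{0,\Omega}^2$. For the plasticity part, $\mymatrix{\mu}^*$ minimizes $\E(\cdot)$ over $\Lambda$ by \eqref{eq:local_minimization_apost} and $\mymatrix{\lambda}\in\Lambda$, so $\E(\mymatrix{\mu}^*)\le\E(\mymatrix{\lambda})$, and Lemma~\ref{lem:optimal_lambda} together with $\Vert\mymatrix{\lambda}\Vert_{0,\Omega}\le\Vert\sigma_y\Vert_{0,\Omega}$ gives $\E(\mymatrix{\mu}^*)\lesssim\Vert\mymatrix{\lambda}-\mymatrix{\lambda}_N\Vert_{0,\Omega}^2+\Vert\mymatrix{p}-\mymatrix{p}_N\Vert_{0,\Omega}$, where the data-dependent factor $\Vert\sigma_y\Vert_{0,\Omega}$ is absorbed into the hidden constant. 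Adding the three estimates gives the assertion.

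The only point requiring attention is the summation of the local efficiency bound of Lemma~\ref{lem:localEta}: one must check that the locally quasi-uniform mesh and the comparable polynomial degrees indeed allow the patch-wise norms to be absorbed into global norms with a constant that is independent of the mesh (and may depend only on $p$, which is permitted here). Since $\lesssim_p$ tolerates arbitrary powers of $p$, neither the factor $p_T^{1+2\varepsilon}$ nor the extra $p_T^{2\varepsilon}$ in front of the oscillation terms poses a difficulty; there is no conceptually hard step, the argument is essentially bookkeeping.
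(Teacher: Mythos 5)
Your proof is correct, but it follows a different route than the paper's official proof of Theorem~\ref{thm:globalLowerEst}. The paper proves the theorem by summing \eqref{eq:residual_est_effi3} in its \emph{original} form, i.e.~with the auxiliary solution $(\myvec{u}^*,\mymatrix{p}^*)$, and then invoking Theorem~\ref{thm:efficiency} to convert $\Vert(\myvec{u}^*-\myvec{u}_N,\mymatrix{p}^*-\mymatrix{p}_N)\Vert^2+\E(\mymatrix{\mu}^*)$ into the right-hand side, together with Lemma~\ref{lem:localDev} for the deviatoric term. You instead use the variant of Lemma~\ref{lem:localEta} with $(\myvec{u},\mymatrix{p})$ inserted and bound $\E(\mymatrix{\mu}^*)$ directly through the minimality property \eqref{eq:local_minimization_apost} and Lemma~\ref{lem:optimal_lambda}, so the auxiliary problem \eqref{eq:additional_problem} never enters your argument; this is precisely the alternative derivation the paper itself points out in the remark following the theorem. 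The paper's route has the advantage of reusing the already established Theorem~\ref{thm:efficiency} and keeping the efficiency analysis structurally parallel to the reliability analysis built on the auxiliary problem, whereas your route is more self-contained and makes explicit that the efficiency estimate does not actually depend on $(\myvec{u}^*,\mymatrix{p}^*)$. Your handling of the details is sound: the decomposition of $\eta^2(\mymatrix{\mu}^*)$ into the three contributions matches \eqref{eq:eta_mu}, fixing $\varepsilon$ and absorbing the $p_T$-powers is admissible because the assertion only claims $\lesssim_p$, the finite-overlap argument for the patches $\omega_T$ is the standard one (and is also implicit in the paper's summation over $\mathcal{T}_h$), and absorbing $\Vert\sigma_y\Vert_{0,\Omega}$ into the hidden constant is legitimate since it is a data constant independent of $h$ and $p$.
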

\begin{proof}
The assertion immediately follows from Theorem~\ref{thm:efficiency}, Lemma~\ref{lem:localDev} and Lemma~\ref{lem:localEta}, where the expression in \eqref{eq:residual_est_effi3} is summed over $\mathcal{T}_h$.
\end{proof}

\begin{remark*}
The assertion of Thereom~\ref{thm:globalLowerEst} can also be proven by applying Lemma~\ref{lem:optimal_lambda} together with Lemma~\ref{lem:localDev} and Lemma~\ref{lem:localEta} by inserting $(\myvec{u},\mymatrix{p})$ in \eqref{eq:residual_est_effi3}, i.e.~the auxiliary problem \eqref{eq:additional_problem} is not actually needed to derive the above efficieny estimate. Furthermore, 
the assumption on $\myspace{C}$ in Lemma~\ref{lem:localEta} might be dropped at the cost of additional data oscillation terms.
\end{remark*}

Note that Lemma~\ref{lem:localEta} admits a local efficiency estimate: By using the same arguments as in Section~\ref{subsec:AuxProSec} with respect to $T\in \mathcal{T}_h$ we observe that 
\begin{align*}
 \E_T(\mymatrix{\mu}^*) 
 = \min_{\mymatrix{\mu}\in \Lambda_T} \E_T(\mymatrix{\mu})
\end{align*}
with $\Lambda_T := \left\lbrace \mymatrix{\mu}\in Q_{|T} \; ; \; \vert\mymatrix{\mu}\vert_F \leq \sigma_y \text{ a.e.~in } T \right\rbrace$. Thus, analogously to Lemma~\ref{lem:optimal_lambda} we obtain
\begin{align*}
 \E_T(\mymatrix{\mu}^*) 
 \leq \Vert \mymatrix{\lambda}-\mymatrix{\lambda}_N\Vert_{0,T}^2 + \big( \Vert \sigma_y \Vert_{0,T} + \Vert \mymatrix{\lambda}\Vert_{0,T} \big) \, \Vert \mymatrix{p}-\mymatrix{p}_N\Vert_{0,T}.
\end{align*}
By using the same estimation techniques as in the proof of Lemma~\ref{lem:localDev} we obtain
\begin{align*}
 \big\Vert \operatorname{dev}\big(\mymatrix{\sigma} ( \myvec{u}_N,\mymatrix{p}_N ) - \myspace{H} \, \mymatrix{p}_N \big) - \mymatrix{\lambda}_N \big\Vert_{0,T}^2 
 \lesssim \Vert \myvec{u}-\myvec{u}_N\big\Vert^2_{0,T} + \Vert \mymatrix{p}-\mymatrix{p}_N\big\Vert^2_{0,T} + \Vert \mymatrix{\lambda} - \mymatrix{\lambda}_N \Vert_{0,T}^2.
\end{align*}
This eventually gives the local efficiency estimate
\begin{align*}
 \eta^2_T(\mymatrix{\mu}^*)
 \lesssim_p \Vert \myvec{u}-\myvec{u}_N \Vert^2_{0,T} + \Vert \mymatrix{p}-\mymatrix{p}_N\Vert^2_{0,T}  + \Vert \mymatrix{\lambda}-\mymatrix{\lambda}_N\Vert_{0,T}^2 +  \Vert \mymatrix{p}-\mymatrix{p}_N\Vert_{0,T}+ \osc^2_T.
\end{align*}


\begin{figure}[ht]
  \centering 
  \begin{subfigure}[t]{0.3\textwidth}
    \centering
	\includegraphics[trim = 30mm 37mm 0mm 31mm, clip,height=40mm, keepaspectratio]{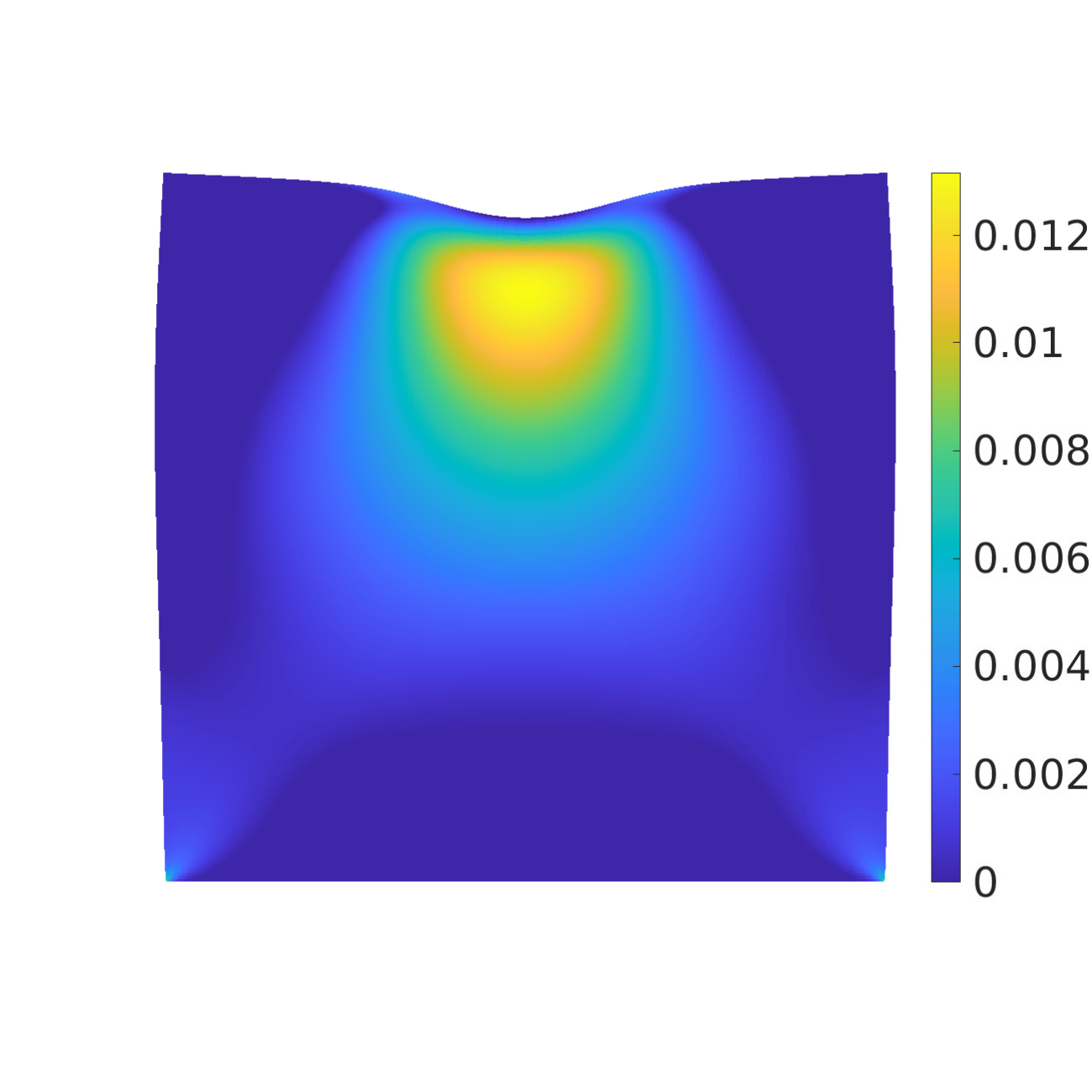}
	\caption{$\vert  \mymatrix{p}_{hp}  \vert_F$}
  \end{subfigure}
  %
  %
  \begin{subfigure}[t]{0.3\textwidth}
    \centering
	\includegraphics[trim = 30mm 37mm 15mm 31mm, clip,height=40mm, keepaspectratio]{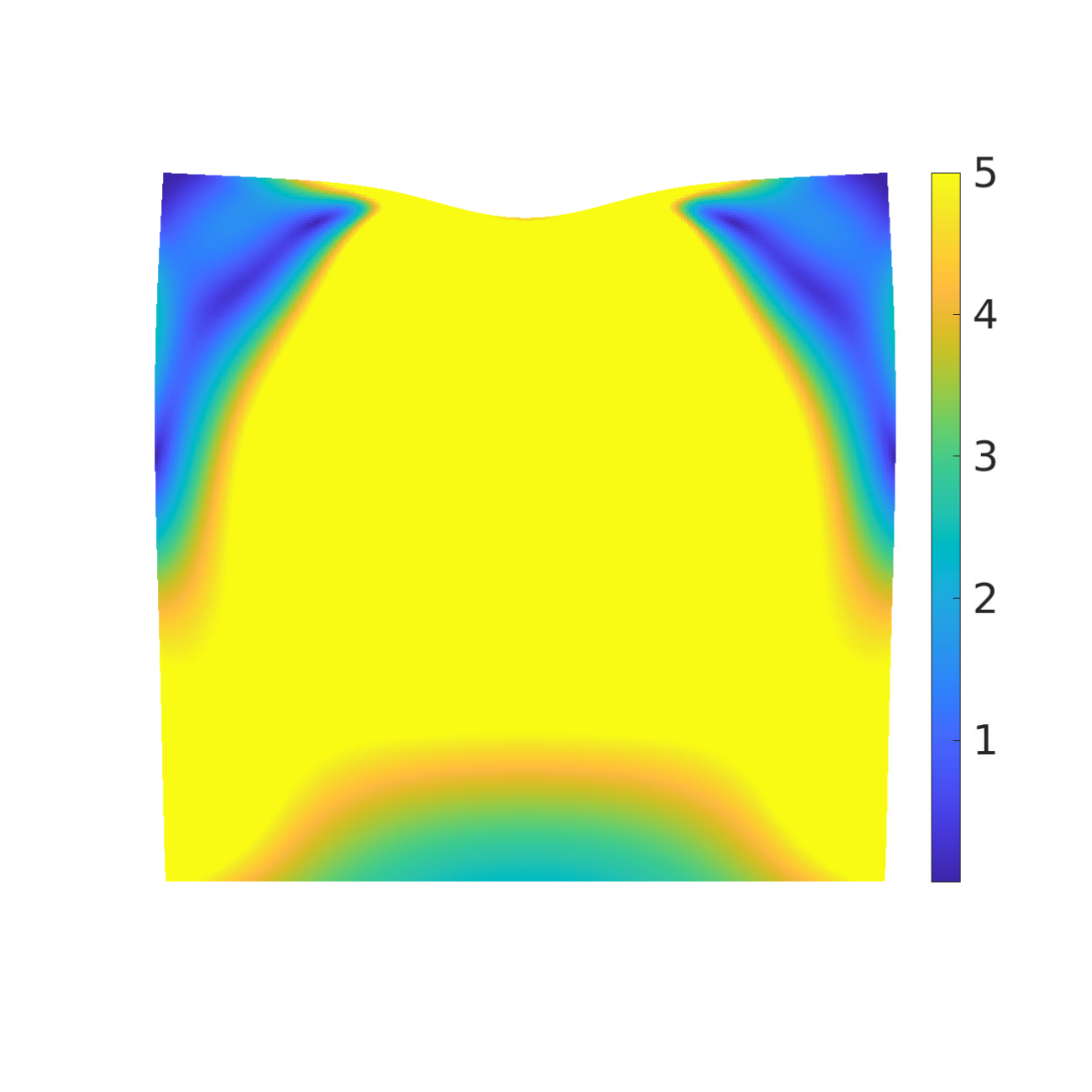}
	\caption{$\vert  \mymatrix{\lambda}_{hp} \vert_F$}
  \end{subfigure}
  %
  %
  \begin{subfigure}[t]{0.3\textwidth}
    \centering
	\includegraphics[trim = 45mm 45mm 37mm 38mm, clip,height=40mm, keepaspectratio]{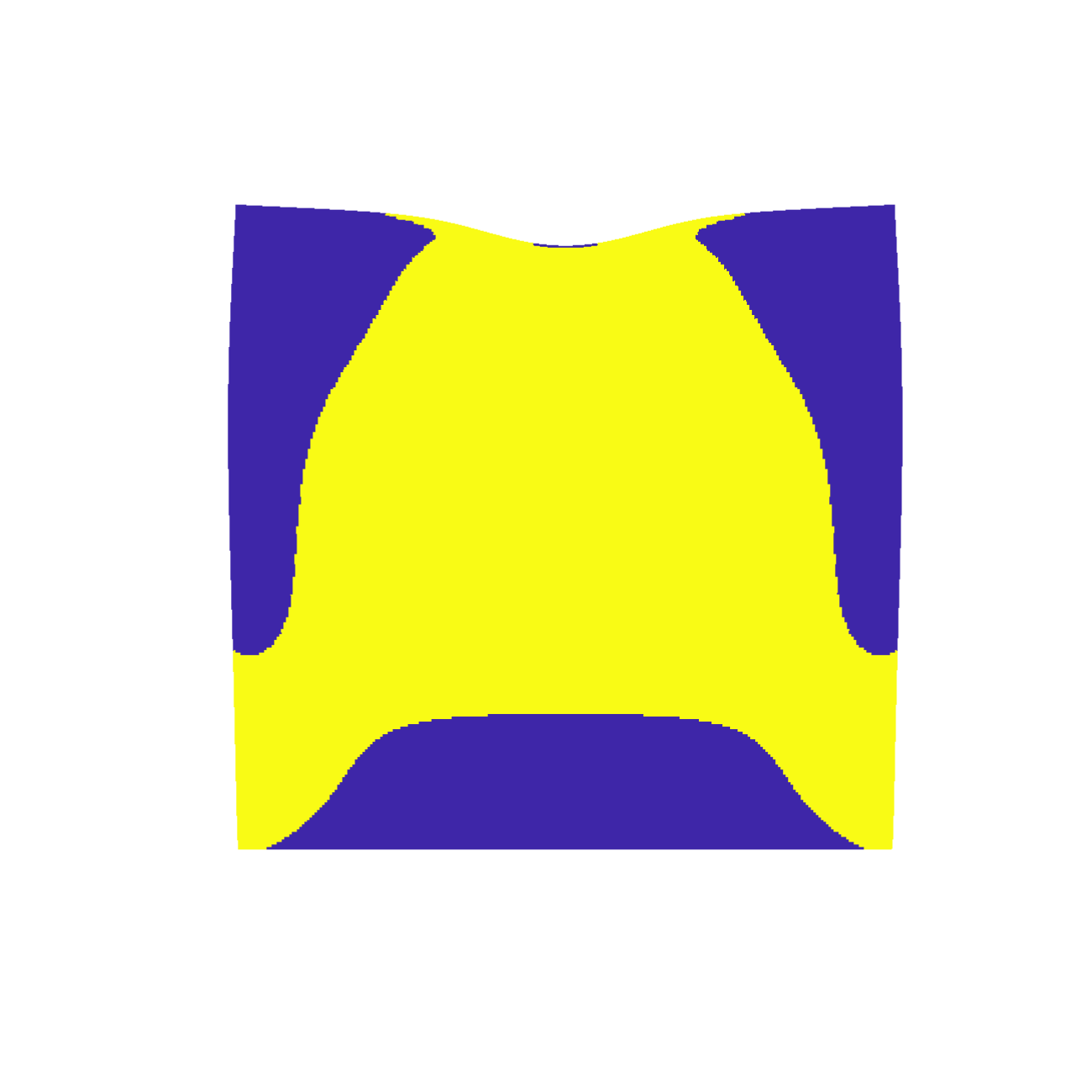}
	\caption{Purly elastic region (blue)}
	 \label{fig:elastic_region}
  \end{subfigure}
  \caption{\it Deformation of $\Omega$ magnified by factor 10 for uniform mesh with $h=2^{-7}$ and $p=1$.}
  \label{fig:solution}
\end{figure}




\section{Numerical Results}
\label{sec:numeric}

In this section we consider some numerical examples in 2D, which illustrate the applicability of the discrete mixed formulation \eqref{eq:discrete_mixed_variationalF} (and also of the discrete variational inequality due to its equivalence to the mixed discretizations). In particular, $h$- and $hp$-adaptive schemes steered by the residual based a~posteriori error estimator \eqref{eq:eta_mu} are discussed. For this purpose, we set $\Omega := (-1,1)^2$ and $\Gamma_D := [-1,1]\times\lbrace -1 \rbrace$ and define $\myvec{f} := \myvec{0}$ on $\Omega$ and $\myvec{g} := (0,-400 \min(0,x_1^2-1/4)^2)^{\top}$ on $[-1,1]\times \lbrace 1\rbrace$ and zero elsewhere. Furthermore, we let $\myspace{C}\mymatrix{\tau} := \lambda \operatorname{tr}(\mymatrix{\tau})\mymatrix{I}+2\mu\mymatrix{\tau}$ with Lam\'e constants $\lambda:=\mu:=1000$, $\myspace{H}\mymatrix{\tau}:=500\mymatrix{\tau}$ and $\sigma_y:=5$. 
We refer to \cite{Bammer2023Apriori}, where this setting is already used for numerical experiments in the context of the discrete mixed formulation \eqref{eq:discrete_mixed_variationalF} with $\Lambda_{hp}^{(i)}=\Lambda_{hp}^{(s)}$. As no analytic solution is known, we use
 \begin{align*}
  e_{\myvec{u}}:=\| \myvec{u}_\text{fine} - \myvec{u}_{hp}\|_{1, \Omega}, \quad
  e_{\mymatrix{p}}:=\|\mymatrix{p}_\text{fine}-\mymatrix{p}_{hp} \|_{0, \Omega}, \quad
  e_{\mymatrix{\lambda}}:=\| \mymatrix{\lambda}_\text{fine}- \mymatrix{\lambda}_{hp} \|_{0, \Omega}
 \end{align*}
to quantify the approximation error. Thereby, \textit{fine} indicates an overkill discrete solution by halving $h_T$ and increasing $p_T$ by one on all elements of the finest mesh. In Figure~\ref{fig:solution}, a numerical solution computed for the above described setting is depicted. In particular, Figure~\ref{fig:elastic_region} shows the free boundary indicating the transition from pure elastic ($\mymatrix{p}=0$) to elastoplastic deformation ($\mymatrix{p}\neq 0$). In Figure~\ref{fig:error_2d}, the total error 
\begin{align}\label{eq:error}
 	\sqrt{e_{\myvec{u}}^2+e_{\mymatrix{p}}^2+e_{\mymatrix{\lambda}}^2}
 \end{align}
is plotted against the degrees of freedom (DOF). Figure~\ref{fig:errorEst_2d} shows the error estimator with respect to the DOF for uniform $h$-refinements with $p=1,2,3$, uniform $p$-refinements, $h$-adaptive refinements with $p=1,2,3$ and $hp$-adaptive refinements, where we use $\mymatrix{\lambda}_{hp}$ as $\mymatrix{\lambda}_N$, see Section \ref{sec:residualErrorEst}. For the $h$- and $hp$-adaptive refinements we apply Dörfler-marking with bulk parameter $0.5$ for the adaptive mesh refinements and a local regularity estimate based on the decay rate of the error estimator in $p$ for the $h$- vs.~$p$-decision, see \cite{banz2020posteriori,banz2018higher,Banz2021abstract}. We observe that all discretization schemes and all a posteriori error estimates converge at a certain algebraic rate (e.g.~1.5 with respect to DOF in the case of $hp$-adaptive refinements). We refer to \cite{Bammer2023Apriori} for more details on expectable convergence rates.


\begin{figure}
  \centering 
  \begin{subfigure}[t]{0.3\textwidth}
    \centering
    \begin{tikzpicture}[scale=0.6]
		\begin{loglogaxis}[
			width=1.8\textwidth,
			mark size=2.5pt,
			line width=0.75pt,
			xmin=1e1,xmax=1e8,
			ymin=1e-6,ymax=1e+1,
			legend style={at={(0,0)},anchor=south west},
            legend style={fill=none}
			]
   
			\addplot+[mark=o, color=blue] table[x index=0,y expr=(sqrt(\thisrowno{3}^2+\thisrowno{4}^2+\thisrowno{5}^2))] {logos/h1.txt};
			\addplot+[mark=square, color=red] table[x index=0,y expr=(sqrt(\thisrowno{3}^2+\thisrowno{4}^2+\thisrowno{5}^2))] {logos/h2.txt};
            \addplot+[mark=x, color=green!60!black] table[x index=0,y expr=(sqrt(\thisrowno{3}^2+\thisrowno{4}^2+\thisrowno{5}^2))] {logos/h3.txt};
            \addplot+[mark=asterisk, color=darkbrown] table[x index=0,y expr=(sqrt(\thisrowno{3}^2+\thisrowno{4}^2+\thisrowno{5}^2))] {logos/p.txt};

			\addplot+[mark=+, color=purple] table[x index=0,y expr=(sqrt(\thisrowno{3}^2+\thisrowno{4}^2+\thisrowno{5}^2))] {logos/a1.txt};
            \addplot+[mark=o, solid, color=black] table[x index=0,y expr=(sqrt(\thisrowno{3}^2+\thisrowno{4}^2+\thisrowno{5}^2))] {logos/a2.txt};
            \addplot+[mark=10-pointed star, solid, color=teal] table[x index=0,y expr=(sqrt(\thisrowno{3}^2+\thisrowno{4}^2+\thisrowno{5}^2))] {logos/a3.txt};
            \addplot+[mark=diamond, solid, color=orange] table[x index=0,y expr=(sqrt(\thisrowno{3}^2+\thisrowno{4}^2+\thisrowno{5}^2))] {logos/hp.txt};

			\draw (1e6,9e-2) -- (1e7,9e-2);
			\draw (1e7,9e-2) -- (1e7,9e-2*0.316227766016838);
			\draw (1e6,9e-2) -- (1e7,9e-2*0.316227766016838);
			\node at (2.5e7,5.5e-2) {$0.5$};
			
			\draw (1e6,2e-3) -- (1e7,2e-3);
			\draw (1e7,2e-3) -- (1e7,2e-3*0.1);
			\draw (1e6,2e-3) -- (1e7,2e-3*0.1);
			\node at (1.8e7,8e-4) {$1$};
			
			\draw (5e5,2e-4*0.031622776601684) -- (5e6,2e-4*0.031622776601684);
 			\draw (5e5,2e-4) -- (5e5,2e-4*0.031622776601684);
 			\draw (5e5,2e-4) -- (5e6,2e-4*0.031622776601684);
			\node at (2e5,3.5e-5) {$1.5$};

   \legend{{$h1$},
   {$h2$},{$h3$},{$p$},
   {$a1$},{$a2$},{$a3$},{$hp$}}
      
		\end{loglogaxis}
	  \end{tikzpicture}
      \caption{$\sqrt{e_{\myvec{u}}^2+e_{\mymatrix{p}}^2+e_{\mymatrix{\lambda}}^2}$}
      \label{fig:error_2d}
    \end{subfigure}  
    \hspace{0.3cm} 
    \begin{subfigure}[t]{0.3\textwidth}
      \centering
	  \begin{tikzpicture}[scale=0.6]
		\begin{loglogaxis}[
			width=1.8\textwidth,
			mark size=2.5pt,
			line width=0.75pt,
			xmin=1e1,xmax=1e8,
			ymin=1e-5,ymax=1e+2,
			legend style={at={(0,0)},anchor=south west},
            legend style={fill=none}
			]

   			\addplot+[mark=o, color=blue] table[x index=0,y index=6] {logos/h1.txt};
            \addplot+[mark=square, color=red] table[x index=0,y index=6] {logos/h2.txt};
            \addplot+[mark=x, color=green!60!black] table[x index=0,y index=6] {logos/h3.txt};
            \addplot+[mark=asterisk, color=darkbrown] table[x index=0,y index=6] {logos/p.txt};

			\addplot+[mark=+, color=purple] table[x index=0,y index=6] {logos/a1.txt};
            \addplot+[mark=o, solid, color=black] table[x index=0,y index=6] {logos/a2.txt};
            \addplot+[mark=10-pointed star, solid, color=teal] table[x index=0,y index=6] {logos/a3.txt};
            \addplot+[mark=diamond, solid, color=orange] table[x index=0,y index=6] {logos/hp.txt};
    
			\draw (1e6,9e-1) -- (1e7,9e-1);
			\draw (1e7,9e-1) -- (1e7,9e-1*0.346736850452532);
			\draw (1e6,9e-1) -- (1e7,9e-1*0.346736850452532);
			\node at (2.5e7,5.5e-1) {$0.46$};

			\draw (1e6,1e-1*0.464158883361278) -- (1e7,1e-1*0.464158883361278);
 			\draw (1e6,1e-1) -- (1e6,1e-1*0.464158883361278);
 			\draw (1e6,1e-1) -- (1e7,1e-1*0.464158883361278);
			\node at (6e5,7e-2) {$\frac{1}{3}$};
   
			\draw (1e6,1e-2) -- (1e7,1e-2);
			\draw (1e7,1e-2) -- (1e7,1e-2*0.1);
			\draw (1e6,1e-2) -- (1e7,1e-2*0.1);
			\node at (1.8e7,4e-3) {$1$};
			
			\draw (3e5,6e-4*0.031622776601684) -- (3e6,6e-4*0.031622776601684);
 			\draw (3e5,6e-4) -- (3e5,6e-4*0.031622776601684);
 			\draw (3e5,6e-4) -- (3e6,6e-4*0.031622776601684);
			\node at (1e5,9e-5) {$1.5$};

            \legend{{$h1$},{$h2$},{$h3$},{$p$},{$a1$},{$a2$},{$a3$},{$hp$}}
   
		\end{loglogaxis}
	   \end{tikzpicture}
       \caption{Error estimator}
       \label{fig:errorEst_2d}
     \end{subfigure}
     \hspace{0.3cm} 
     \begin{subfigure}[t]{0.3\textwidth}
       \centering
	   \begin{tikzpicture}[scale=0.6] 
		\begin{loglogaxis}[
			width=1.8\textwidth,
			mark size=2.5pt,
			line width=0.75pt,
			xmin=1e1,xmax=1e8,
			ymin=8e-1,ymax=1e2,
			legend style={at={(0,1)},anchor=north west},
            legend style={fill=none}
			]

   			\addplot+[mark=o, color=blue] table[x index=0,y expr=(\thisrowno{6}/sqrt(\thisrowno{3}^2+\thisrowno{4}^2+\thisrowno{5}^2))] {logos/h1.txt};
            \addplot+[mark=square, color=red] table[x index=0,y expr=(\thisrowno{6}/sqrt(\thisrowno{3}^2+\thisrowno{4}^2+\thisrowno{5}^2))] {logos/h2.txt};
            \addplot+[mark=x, color=green!60!black] table[x index=0,y expr=(\thisrowno{6}/sqrt(\thisrowno{3}^2+\thisrowno{4}^2+\thisrowno{5}^2))] {logos/h3.txt};
            \addplot+[mark=asterisk, color=darkbrown] table[x index=0,y expr=(\thisrowno{6}/sqrt(\thisrowno{3}^2+\thisrowno{4}^2+\thisrowno{5}^2))] {logos/p.txt};

			\addplot+[mark=+, color=purple] table[x index=0,y expr=(\thisrowno{6}/sqrt(\thisrowno{3}^2+\thisrowno{4}^2+\thisrowno{5}^2))] {logos/a1.txt};
            \addplot+[mark=o, solid, color=black] table[x index=0,y expr=(\thisrowno{6}/sqrt(\thisrowno{3}^2+\thisrowno{4}^2+\thisrowno{5}^2))] {logos/a2.txt};
            \addplot+[mark=10-pointed star, solid, color=teal] table[x index=0,y expr=(\thisrowno{6}/sqrt(\thisrowno{3}^2+\thisrowno{4}^2+\thisrowno{5}^2))] {logos/a3.txt};
            \addplot+[mark=diamond, solid, color=orange] table[x index=0,y expr=(\thisrowno{6}/sqrt(\thisrowno{3}^2+\thisrowno{4}^2+\thisrowno{5}^2))] {logos/hp.txt};

			\draw (1e4,12*4.265795188015927) -- (1e5,12*4.265795188015927);
			\draw (1e4,12) -- (1e4,12*4.265795188015927);
			\draw (1e4,12) -- (1e5,12*4.265795188015927);
			\node at (3e3,25) {$0.63$};

			\draw (1e6,17*1.778279410038923) -- (1e7,17*1.778279410038923);
			\draw (1e6,17) -- (1e6,17*1.778279410038923);
			\draw (1e6,17) -- (1e7,17*1.778279410038923);
			\node at (5e5,24) {$\frac{1}{4}$};
   
            \legend{{$h1$},{$h2$},{$h3$},{$p$},{$a1$},{$a2$},{$a3$},{$hp$}}
      
		\end{loglogaxis}
	  \end{tikzpicture}
      \caption{Efficiency index}
      \label{fig:Effi_2d}
    \end{subfigure}

    \caption{\it Individual approximation errors vs.~degrees of freedom. In the legend $hi$ stands for uniform $h$-mesh refinements with $p=i$, $p$ for uniform $p$-refinements with $h=0.4$, $ai$ for $h$-adaptive refinements with $p=i$ and $hp$ for $hp$-adaptive refinements.}
  \label{fig:error_and_Est_2d}
    
\end{figure}
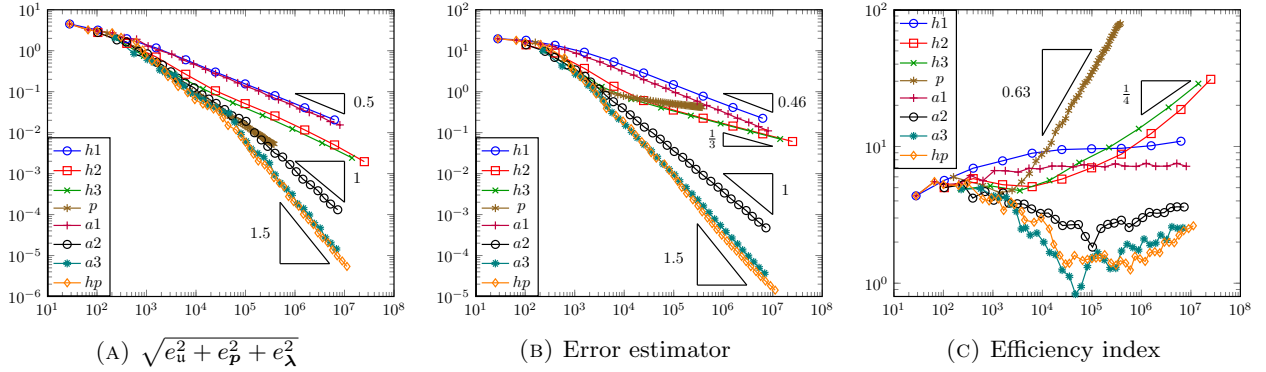


In Figure~\ref{fig:Effi_2d}, the efficiency indices are shown, i.e.~the quotient of the error estimator $\eta(\mymatrix{\mu}^*)$ as defined in \eqref{eq:eta_mu} and the total error \eqref{eq:error}. They are nearly constant for the $h$-refinements with $p=1$ and bounded for all three $h$-adaptive refinements and also for the $hp$-adaptive refinements. However, the efficiency indices increase for uniform $h$-refinements with $p=2,3$ and for the uniform $p$-refinements. A closer look at the individual contributions of the total error and the a posteriori error estimator shows the reason for the increase in the case of the uniform $h$-refinements: We observe in Figure~\ref{fig:Contribution_h2} and Figure~\ref{fig:Contribution_h3} that the contribution $\eta := \big(\sum_{T\in\mathcal{T}_h} \eta_T^2 \big)^{1/2}$  with $\eta_T$ defined in \eqref{eq:eta_T} converges with the rate $0.34$ and $0.33$, respectively, and dominates the error estimator. The contribution $e_{\mymatrix{\lambda}}$ has the rate $0.54$ and dominates the total error. Thus, the different rates of $\eta$ and $e_{\mymatrix{\lambda}}$ lead to the behavior of the efficiency indices. Note that $e_{\myvec{u}}$ and $e_{\mymatrix{p}}$ have the same rate as $\eta$. As the rate of $e_{\mymatrix{\lambda}}$ is significanly larger the efficiency indices may (asymptotically) be bounded as well. The increasing efficiency indices in the case of uniform $p$-refinements may reflect the $p$-dependency of the efficiency estimates of Lemma~\ref{lem:localEta} and Theorem~\ref{thm:globalLowerEst}, see Figure~\ref{fig:Contribution_p}. We clearly see in the Figures~\ref{fig:Contribution_h1}, \ref{fig:Contribution_a1}, \ref{fig:Contribution_a2}, \ref{fig:Contribution_a3} and \ref{fig:Contribution_hp} that 
the convergence rates of the error contributions for all other refinements are nearly the same, which explains the constant/bounded behavior of the corresponding efficiency indices.


\begin{figure}
  \centering
  \begin{subfigure}[t]{0.25\textwidth}
    \centering
	\begin{tikzpicture}[scale=0.5]  
		\begin{loglogaxis}[
			width=2\textwidth,
			mark size=2.5pt,
			line width=0.75pt,
			xmin=1e1,xmax=1e7,
			ymin=1e-5,ymax=5e1,
			legend style={at={(0,0)},anchor=south west}
			]
   
			\addplot+[mark=x, solid, color=darkbrown] table[x index=0,y expr=(sqrt(\thisrowno{3}^2+\thisrowno{4}^2))] {logos/h1.txt};
            \addplot+[mark=square, color=red] table[x index=0,y index=5] {logos/h1.txt};
            \addplot+[mark=o, color=orange] table[x index=0,y expr=(sqrt(\thisrowno{7}^2+\thisrowno{8}^2))] {logos/h1.txt};
            \addplot+[mark=+, color=teal] table[x index=0,y index=9] {logos/h1.txt};
            \addplot+[mark=diamond, solid, color=blue] table[x index=0,y index=10] {logos/h1.txt};

			\draw (2e5,5e-4) -- (2e6,5e-4);
			\draw (2e6,5e-4) -- (2e6,5e-4*0.346736850452532);
			\draw (2e5,5e-4) -- (2e6,5e-4*0.346736850452532);
			\node at (4.5e6,3e-4) {$0.46$};

   			\draw (2e5,2e-0) -- (2e6,2e-0);
			\draw (2e6,2e-0) -- (2e6,2e-0*0.346736850452532);
			\draw (2e5,2e-0) -- (2e6,2e-0*0.346736850452532);
			\node at (4.5e6,1.2e-0) {$0.46$};
   
            \legend{{$\sqrt{e_{\myvec{u}}^2+e_{\mymatrix{p}}^2}$},{$e_{\mymatrix{\lambda}}$},{$\eta$},{dev},{$\E(\mymatrix{\mu}^*)$}}
      
		\end{loglogaxis}
	  \end{tikzpicture}
      \caption{$h$-uniform, $p=1$}
      \label{fig:Contribution_h1}
    \end{subfigure}
    \hspace{-0.3cm} 
    \begin{subfigure}[t]{0.25\textwidth}
      \centering
	  \begin{tikzpicture}[scale=0.5]
		\begin{loglogaxis}[
			width=2\textwidth,
			mark size=2.5pt,
			line width=0.75pt,
			xmin=5e1,xmax=5e7,
			ymin=1e-5,ymax=5e1,
			legend style={at={(0,0)},anchor=south west}
			]
	
			\addplot+[mark=x, solid, color=darkbrown] table[x index=0,y expr=(sqrt(\thisrowno{3}^2+\thisrowno{4}^2))] {logos/h2.txt};
            \addplot+[mark=square, color=red] table[x index=0,y index=5] {logos/h2.txt};
            \addplot+[mark=o, color=orange] table[x index=0,y expr=(sqrt(\thisrowno{7}^2+\thisrowno{8}^2))] {logos/h2.txt};
            \addplot+[mark=+, color=teal] table[x index=0,y index=9] {logos/h2.txt};
            \addplot+[mark=diamond, solid, color=blue] table[x index=0,y index=10] {logos/h2.txt};

			\draw (5e5,2e-4) -- (5e6,2e-4);			\draw (5e6,2e-4) -- (5e6,2e-4*0.457088189614875);
			\draw (5e5,2e-4) -- (5e6,2e-4*0.457088189614875);
			\node at (1.3e7,1.4e-4) {$0.34$};

			\draw (5e5,4e-1) -- (5e6,4e-1);
			\draw (5e6,4e-1) -- (5e6,4e-1*0.457088189614875);
			\draw (5e5,4e-1) -- (5e6,4e-1*0.457088189614875);
			\node at (1.3e7,2.9e-1) {$0.34$};

			\draw (5e5,3e-3*0.288403150312661) -- (5e6,3e-3*0.288403150312661);
 			\draw (5e5,3e-3) -- (5e5,3e-3*0.288403150312661);
 			\draw (5e5,3e-3) -- (5e6,3e-3*0.288403150312661);
			\node at (2e5,1.5e-3) {$0.54$};   
   
            \legend{{$\sqrt{e_{\myvec{u}}^2+e_{\mymatrix{p}}^2}$},{$e_{\mymatrix{\lambda}}$},{$\eta$},{dev},{$\E(\mymatrix{\mu}^*)$}}
   
		\end{loglogaxis}
	\end{tikzpicture}
      \caption{$h$-uniform, $p=2$}
      \label{fig:Contribution_h2}
    \end{subfigure}
    \hspace{-0.3cm} 
    \begin{subfigure}[t]{0.25\textwidth}
    \centering
	\begin{tikzpicture}[scale=0.5]
		\begin{loglogaxis}[
			width=2\textwidth,
			mark size=2.5pt,
			line width=0.75pt,
			xmin=1e2,xmax=5e7,
			ymin=1e-5,ymax=5e1,
			legend style={at={(0,0)},anchor=south west}
			]

			\addplot+[mark=x, solid, color=darkbrown] table[x index=0,y expr=(sqrt(\thisrowno{3}^2+\thisrowno{4}^2))] {logos/h3.txt};
            \addplot+[mark=square, color=red] table[x index=0,y index=5] {logos/h3.txt};
            \addplot+[mark=o, color=orange] table[x index=0,y expr=(sqrt(\thisrowno{7}^2+\thisrowno{8}^2))] {logos/h3.txt};
            \addplot+[mark=+, color=teal] table[x index=0,y index=9] {logos/h3.txt};
            \addplot+[mark=diamond, solid, color=blue] table[x index=0,y index=10] {logos/h3.txt};

			\draw (5e5,2e-4) -- (5e6,2e-4);
			\draw (5e6,2e-4) -- (5e6,2e-4*0.464162445926035);
			\draw (5e5,2e-4) -- (5e6,2e-4*0.464162445926035);
			\node at (1.3e7,1.4e-4) {$0.33$};

			\draw (5e5,4e-1) -- (5e6,4e-1);
			\draw (5e6,4e-1) -- (5e6,4e-1*0.464162445926035);
			\draw (5e5,4e-1) -- (5e6,4e-1*0.464162445926035);
			\node at (1.3e7,2.9e-1) {$0.33$};

			\draw (5e5,3e-3*0.288403150312661) -- (5e6,3e-3*0.288403150312661);
 			\draw (5e5,3e-3) -- (5e5,3e-3*0.288403150312661);
 			\draw (5e5,3e-3) -- (5e6,3e-3*0.288403150312661);
			\node at (2e5,1.5e-3) {$0.54$};  
   
            \legend{{$\sqrt{e_{\myvec{u}}^2+e_{\mymatrix{p}}^2}$},{$e_{\mymatrix{\lambda}}$},{$\eta$},{dev},{$\E(\mymatrix{\mu}^*)$}}
			
		\end{loglogaxis}
	  \end{tikzpicture}
      \caption{$h$-uniform, $p=3$}
      \label{fig:Contribution_h3}
    \end{subfigure}
    \hspace{-0.3cm} 
    \begin{subfigure}[t]{0.25\textwidth}
    \centering
	\begin{tikzpicture}[scale=0.5]  
		\begin{loglogaxis}[
			width=2\textwidth,
			mark size=2.5pt,
			line width=0.75pt,
			xmin=1e2,xmax=1e6,
			ymin=1e-5,ymax=5e1,
			legend style={at={(0,0)},anchor=south west}
			]
   
			\addplot+[mark=x, solid, color=darkbrown] table[x index=0,y expr=(sqrt(\thisrowno{3}^2+\thisrowno{4}^2))] {logos/p.txt};
            \addplot+[mark=square, color=red] table[x index=0,y index=5] {logos/p.txt};
            \addplot+[mark=o, color=orange] table[x index=0,y expr=(sqrt(\thisrowno{7}^2+\thisrowno{8}^2))] {logos/p.txt};
            \addplot+[mark=+, color=teal] table[x index=0,y index=9] {logos/p.txt};
            \addplot+[mark=diamond, solid, color=blue] table[x index=0,y index=10] {logos/p.txt};

            \legend{{$\sqrt{e_{\myvec{u}}^2+e_{\mymatrix{p}}^2}$},{$e_{\mymatrix{\lambda}}$},{$\eta$},{dev},{$\E(\mymatrix{\mu}^*)$}}
      
		\end{loglogaxis}
	  \end{tikzpicture}
      \caption{$p$-uniform, $h=0.4$}
      \label{fig:Contribution_p}
    \end{subfigure}
    
\vspace{0.25cm} 
    
    \begin{subfigure}[t]{0.25\textwidth}
      \centering
	  \begin{tikzpicture}[scale=0.5]  
		\begin{loglogaxis}[
			width=2\textwidth,
			mark size=2.5pt,
			line width=0.75pt,
			xmin=1e1,xmax=1e7,
			ymin=1e-5,ymax=5e1,
			legend style={at={(0,0)},anchor=south west}
			]
   
			\addplot+[mark=x, solid, color=darkbrown] table[x index=0,y expr=(sqrt(\thisrowno{3}^2+\thisrowno{4}^2))] {logos/a1.txt};
            \addplot+[mark=square, color=red] table[x index=0,y index=5] {logos/a1.txt};
            \addplot+[mark=o, color=orange] table[x index=0,y expr=(sqrt(\thisrowno{7}^2+\thisrowno{8}^2))] {logos/a1.txt};
            \addplot+[mark=+, color=teal] table[x index=0,y index=9] {logos/a1.txt};
            \addplot+[mark=diamond, solid, color=blue] table[x index=0,y index=10] {logos/a1.txt};

			\draw (2e5,4e-4) -- (2e6,4e-4);
			\draw (2e6,4e-4) -- (2e6,4e-4*0.316227766016838);
			\draw (2e5,4e-4) -- (2e6,4e-4*0.316227766016838);
			\node at (3.8e6,2.5e-4) {$\frac{1}{2}$};

			\draw (2e5,2e-0) -- (2e6,2e-0);
			\draw (2e6,2e-0) -- (2e6,2e-0*0.316227766016838);
			\draw (2e5,2e-0) -- (2e6,2e-0*0.316227766016838);
			\node at (3.8e6,1.2e-0) {$\frac{1}{2}$};
   
            \legend{{$\sqrt{e_{\myvec{u}}^2+e_{\mymatrix{p}}^2}$},{$e_{\mymatrix{\lambda}}$},{$\eta$},{dev},{$\E(\mymatrix{\mu}^*)$}}
      
		\end{loglogaxis}
	  \end{tikzpicture}
      \caption{$h$-adaptive, $p=1$}
      \label{fig:Contribution_a1}
    \end{subfigure}
    \hspace{-0.3cm} 
    \begin{subfigure}[t]{0.25\textwidth}
      \centering
	  \begin{tikzpicture}[scale=0.5]  
		\begin{loglogaxis}[
			width=2\textwidth,
			mark size=2.5pt,
			line width=0.75pt,
			xmin=5e1,xmax=1e7,
			ymin=1e-7,ymax=5e1,
			legend style={at={(0,0)},anchor=south west}
			]
	
			\addplot+[mark=x, solid, color=darkbrown] table[x index=0,y expr=(sqrt(\thisrowno{3}^2+\thisrowno{4}^2))] {logos/a2.txt};
            \addplot+[mark=square, color=red] table[x index=0,y index=5] {logos/a2.txt};
            \addplot+[mark=o, color=orange] table[x index=0,y expr=(sqrt(\thisrowno{7}^2+\thisrowno{8}^2))] {logos/a2.txt};
            \addplot+[mark=+, color=teal] table[x index=0,y index=9] {logos/a2.txt};
            \addplot+[mark=diamond, solid, color=blue] table[x index=0,y index=10] {logos/a2.txt};

			\draw (2e5,2e-5) -- (2e6,2e-5);
			\draw (2e6,2e-5) -- (2e6,2e-5*0.1);
			\draw (2e5,2e-5) -- (2e6,2e-5*0.1);
			\node at (3e6,6e-6) {1};

			\draw (2e5,7e-2) -- (2e6,7e-2);
			\draw (2e6,7e-2) -- (2e6,7e-2*0.1);
			\draw (2e5,7e-2) -- (2e6,7e-2*0.1);
			\node at (3e6,2.5e-2) {1};
   
            \legend{{$\sqrt{e_{\myvec{u}}^2+e_{\mymatrix{p}}^2}$},{$e_{\mymatrix{\lambda}}$},{$\eta$},{dev},{$\E(\mymatrix{\mu}^*)$}}
   
		\end{loglogaxis}
	  \end{tikzpicture}
      \caption{$h$-adaptive, $p=2$}
      \label{fig:Contribution_a2}
    \end{subfigure}
    \hspace{-0.3cm} 
    \begin{subfigure}[t]{0.25\textwidth}
      \centering
	  \begin{tikzpicture}[scale=0.5] 
		\begin{loglogaxis}[
			width=2\textwidth,
			mark size=2.5pt,
			line width=0.75pt,
			xmin=1e2,xmax=1e7,
			ymin=1e-8,ymax=5e1,
			legend style={at={(0,0)},anchor=south west}
			]

			\addplot+[mark=x, solid, color=darkbrown] table[x index=0,y expr=(sqrt(\thisrowno{3}^2+\thisrowno{4}^2))] {logos/a3.txt};
            \addplot+[mark=square, color=red] table[x index=0,y index=5] {logos/a3.txt};
            \addplot+[mark=o, color=orange] table[x index=0,y expr=(sqrt(\thisrowno{7}^2+\thisrowno{8}^2))] {logos/a3.txt};
            \addplot+[mark=+, color=teal] table[x index=0,y index=9] {logos/a3.txt};
            \addplot+[mark=diamond, solid, color=blue] table[x index=0,y index=10] {logos/a3.txt};

			\draw (2e5,5e-6) -- (2e6,5e-6);
			\draw (2e6,5e-6) -- (2e6,5e-6*0.039810717055350);
			\draw (2e5,5e-6) -- (2e6,5e-6*0.039810717055350);
			\node at (3.8e6,1e-6) {1.4};

			\draw (2e5,1e-2) -- (2e6,1e-2);
			\draw (2e6,1e-2) -- (2e6,1e-2*0.044668359215096);
			\draw (2e5,1e-2) -- (2e6,1e-2*0.044668359215096);
			\node at (4e6,3e-3) {1.35};
   
            \legend{{$\sqrt{e_{\myvec{u}}^2+e_{\mymatrix{p}}^2}$},{$e_{\mymatrix{\lambda}}$},{$\eta$},{dev},{$\E(\mymatrix{\mu}^*)$}}
			
		\end{loglogaxis}
	  \end{tikzpicture}
      \caption{$h$-adaptive, $p=3$}
      \label{fig:Contribution_a3}
    \end{subfigure}
    \hspace{-0.3cm} 
    \begin{subfigure}[t]{0.25\textwidth}
      \centering
	\begin{tikzpicture}[scale=0.5]  
		\begin{loglogaxis}[
			width=2\textwidth,
			mark size=2.5pt,
			line width=0.75pt,
			xmin=1e1,xmax=5e7,
			ymin=1e-8,ymax=5e1,
			legend style={at={(0,0)},anchor=south west}
			]
	
			\addplot+[mark=x, solid, color=darkbrown] table[x index=0,y expr=(sqrt(\thisrowno{3}^2+\thisrowno{4}^2))] {logos/hp.txt};
            \addplot+[mark=square, color=red] table[x index=0,y index=5] {logos/hp.txt};
            \addplot+[mark=o, color=orange] table[x index=0,y expr=(sqrt(\thisrowno{7}^2+\thisrowno{8}^2))] {logos/hp.txt};
            \addplot+[mark=+, color=teal] table[x index=0,y index=9] {logos/hp.txt};
            \addplot+[mark=diamond, solid, color=blue] table[x index=0,y index=10] {logos/hp.txt};

			\draw (1e5,2e-6*0.031622776601684) -- (1e6,2e-6*0.031622776601684);
 			\draw (1e5,2e-6) -- (1e5,2e-6*0.031622776601684);
 			\draw (1e5,2e-6) -- (1e6,2e-6*0.031622776601684);
			\node at (4e4,4e-7) {$1.5$};

   			\draw (3e5,6e-4*0.031622776601684) -- (3e6,6e-4*0.031622776601684);
 			\draw (3e5,6e-4) -- (3e5,6e-4*0.031622776601684);
 			\draw (3e5,6e-4) -- (3e6,6e-4*0.031622776601684);
			\node at (1e5,9e-5) {$1.5$};

   
            \legend{{$\sqrt{e_{\myvec{u}}^2+e_{\mymatrix{p}}^2}$},{$e_{\mymatrix{\lambda}}$},{$\eta$},{dev},{$\E^2(\mymatrix{\mu}^*)$}}
   
		\end{loglogaxis}
	 \end{tikzpicture}
     \caption{$hp$-adaptive}
     \label{fig:Contribution_hp}
   \end{subfigure}
   
   \caption{\it Individual approximation errors vs.~degrees of freedom. In the legend $hi$ stands for uniform $h$-refinements with $p=i$, $p$ for uniform $p$-refinements with $h=0.4$, $ai$ for $h$-adaptive refinements with $p=i$ and $hp$ for $hp$-adaptive refinements.}
  \label{fig:errorContr_2d}
  
\end{figure}
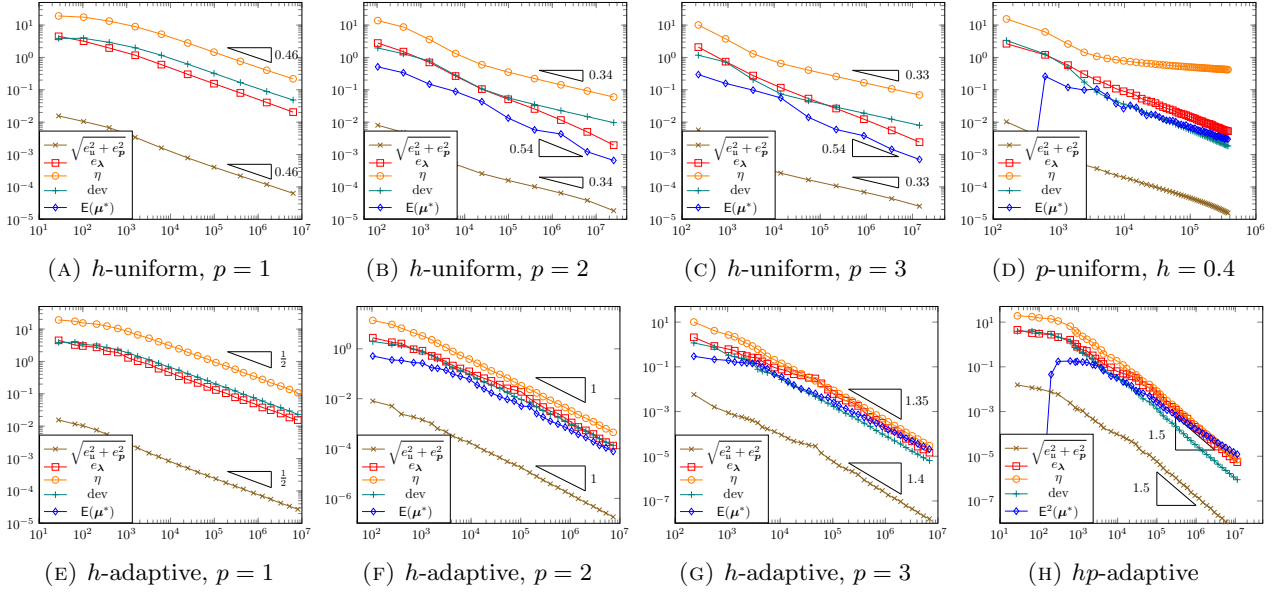


The convergence rates of the schemes with adaptive refinements are significantly superior to those with uniform refinements, as the associated finite element spaces are adapted to the singular behavior of the solution, see Figure~\ref{fig:meshes_2d}. They are even optimal for the uniform polynomial degree $p=1$ and $p=2$. In the case of $h$-adaptivity with $p=1$ only the Dirichlet-to-Neumann singularities are resolved by $h$-refinements indicating that the solution is sufficiently regular at the free boundary (which is not resolved). When $h$-adaptive refinements with $p=2,3$ are applied, the free boundary is also resolved by $h$-refinements, where the refinements become more local for $p=3$. By applying $hp$-adaptive refinements we get the typical $hp$-refinement pattern towards the corner singularities (including the Dirichlet-to-Neumann singularities) and towards the free boundary with a local polynomial degree of $p_T=4$ for those elements which are intersected by the free boundary. We emphasize that the $h$- vs.~$p$-decision strategy used to steer the $hp$-adaptive refinements is prone to $p$-refinements and leads to this relatively high local polynomial degree. Note that isotropic refinements of mesh elements with $p_T \geq 4$ are actually incapable to adequately resolve of the edge like singularity of the curved free boundary. Hence, the convergence rates are only algebraic of order $3/2$ (and not exponential), which is, however, the best rate in the numerical experiments.


\begin{figure}
  \centering 
  \begin{subfigure}[t]{0.23\textwidth}
    \centering
    \includegraphics[trim = 115mm 107mm 95mm 95mm, clip,height=33.9mm, keepaspectratio]{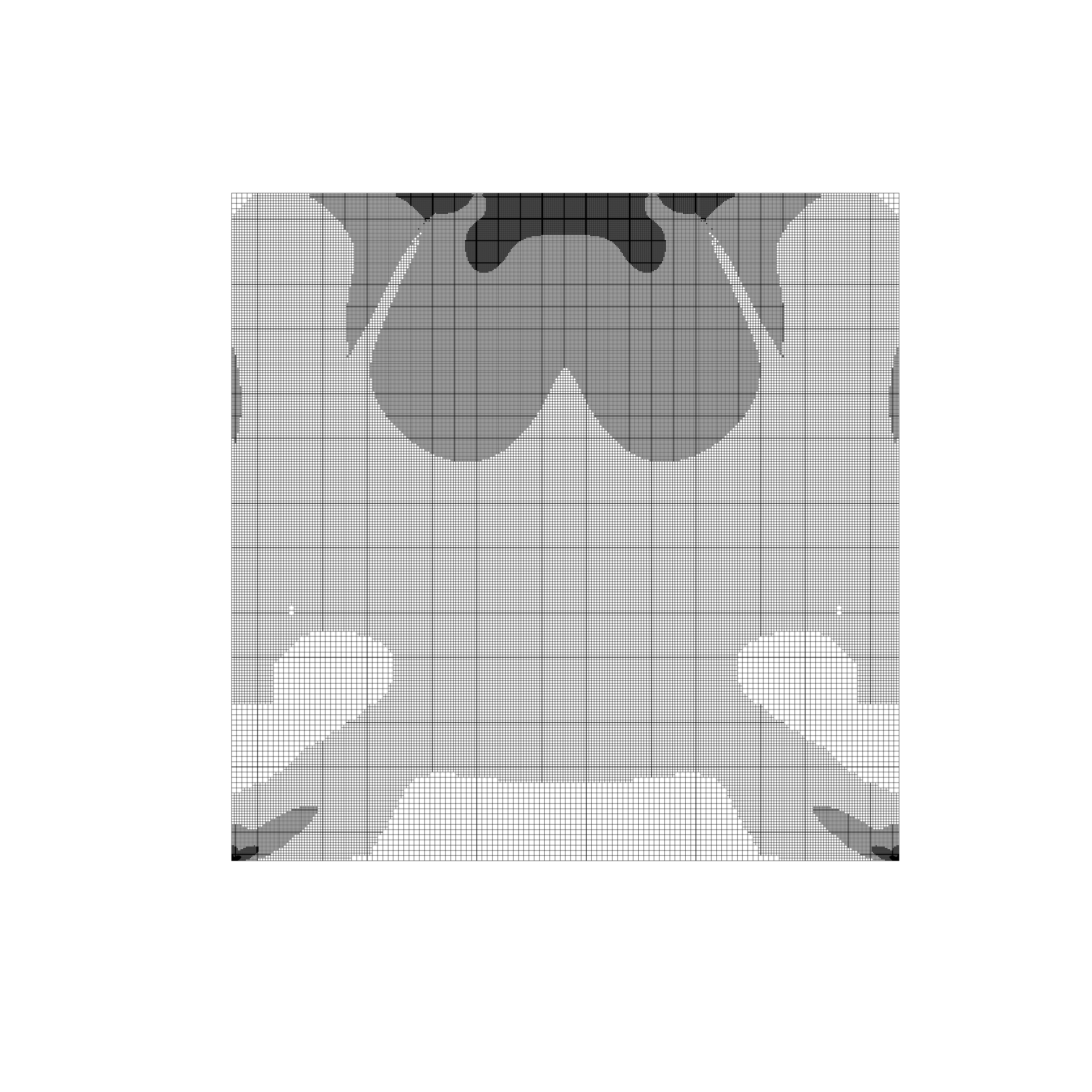}
    \caption{$h$-adaptive mesh, $p=1$ \\ (nr.~20, $783.512$ DOF)}
  \end{subfigure}
  \hspace{0.05cm} 
  \begin{subfigure}[t]{0.23\textwidth}
    \centering
    \includegraphics[trim = 120mm 113mm 100mm 100mm, clip,height=34mm, keepaspectratio]{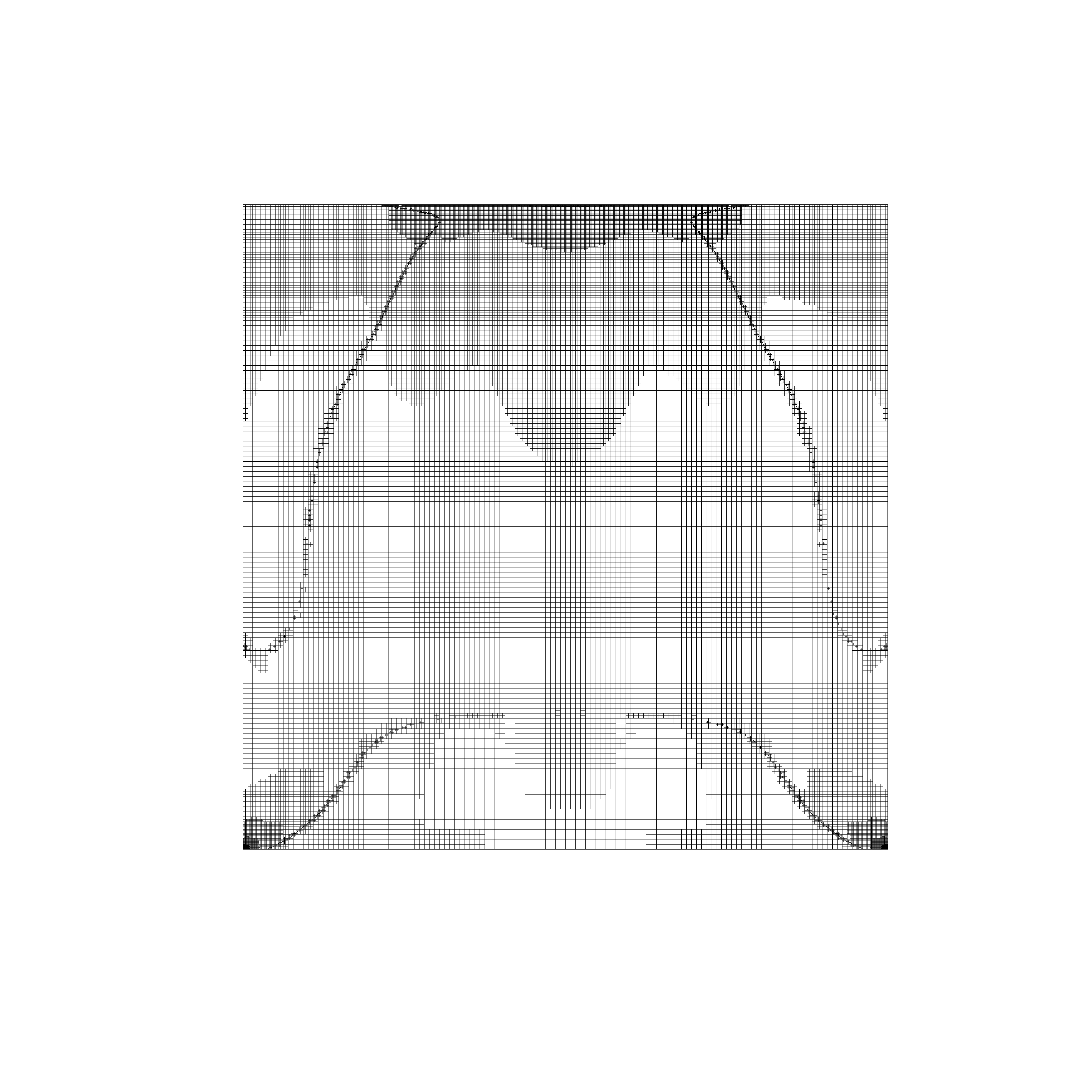}
    \caption{$h$-adaptive mesh, $p=2$ \\ (nr.~25, $1.060.608$ DOF)}
  \end{subfigure}
  \hspace{0.05cm} 
  \begin{subfigure}[t]{0.23\textwidth}
    \centering
    \includegraphics[trim = 45mm 42mm 37mm 37mm, clip,height=34mm, keepaspectratio]{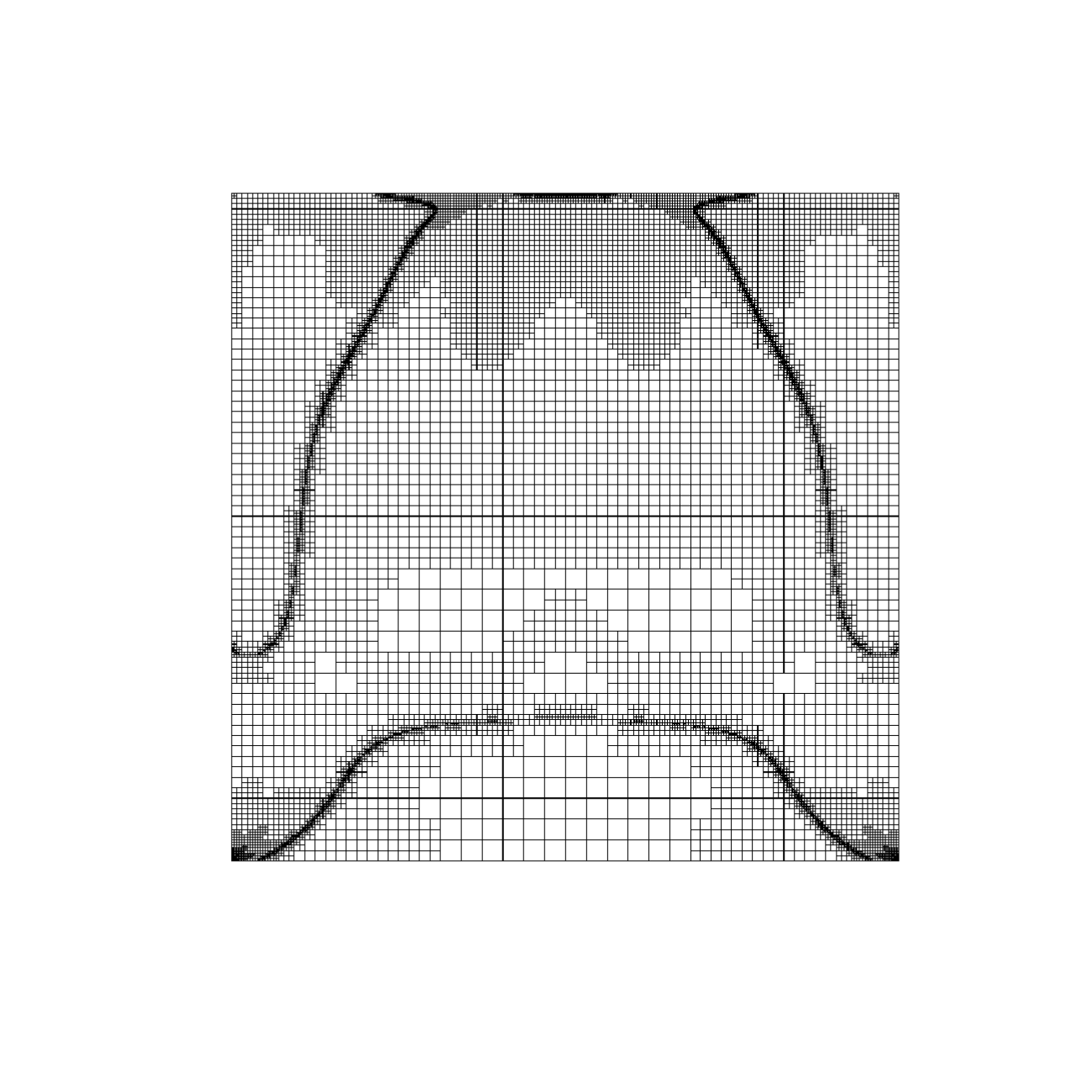}
    \caption{$h$-adaptive mesh, $p=3$ \\ (nr.~28, $874.164$ DOF)}
  \end{subfigure}
  \hspace{0.3cm} 
  \begin{subfigure}[t]{0.2\textwidth}
    \centering
    \includegraphics[trim = 37mm 42mm 15mm 32mm, clip,height=35.2mm, keepaspectratio]{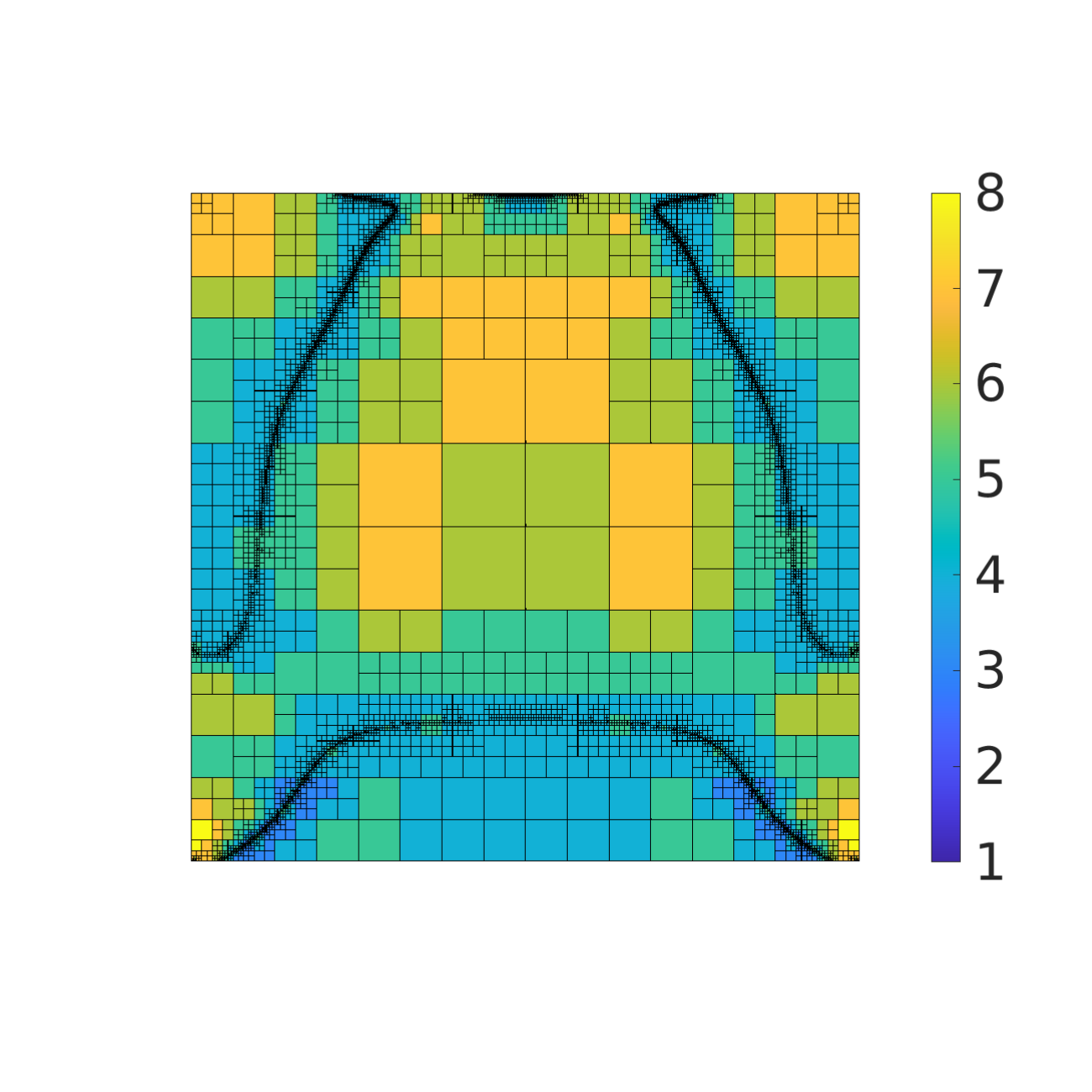}
    \caption{$hp$-adaptive mesh (nr.~35, $936.652$ DOF)}
  \end{subfigure}
  \hspace{0.5cm}
  \caption{\it Adaptively generated meshes with comparable DOF.}
  \label{fig:meshes_2d}
\end{figure}




\section*{Appendix}

Let $X$ be a Hilbert space with associated inner product $(\cdot,\cdot)_X$ and induced norm $\Vert\cdot\Vert_X$. Furthermore, let $\Lambda\subseteq X$ be a non-empty, closed and convex subset. For given $\lambda,p\in X$ consider the minimization problem: Find a $\mu^*\in \Lambda$ such that
\begin{align}\label{eq:abstract_minP}
 E(\mu^*) = \min_{\mu\in \Lambda} E(\mu),
\end{align}
where $E(\cdot)$ is defined as
\begin{align*}
 E(\mu)
 := \Vert \mu - \lambda \Vert_X^2 - (\mu, p)_X.
\end{align*}
Let $\mathcal{P}:X\rightarrow \Lambda$ be the projection operator onto $\Lambda$ with respect to $\Vert\cdot\Vert_X$, i.e.
\begin{align*}
 \Vert \mathcal{P}(\nu)-\nu\Vert_X = \min_{\mu\in\Lambda} \Vert \mu-\nu\Vert_X\qquad 
 \forall \, \nu\in\Lambda.
\end{align*}
It is well-known from the Hilbert projection thereom that $\mathcal{P}(\cdot)$ is well-defined. Furthermore, $\mathcal{P}(\nu)$, for $\nu\in X$, is characterized by the condition
\begin{align}\label{eq:projectionProp_cond}
 \big( \mathcal{P}(\nu) - \mu, \nu - \mathcal{P}(\nu) \big)_X 
 \geq 0 \qquad 
 \forall \, \mu\in X.
\end{align}

\begin{lemma}\label{prop:abstract_minP}
The minimization problem \eqref{eq:abstract_minP} has the unique solution  $\mu^*:= \mathcal{P}(\lambda + \frac{1}{2} \, p)$.
\end{lemma}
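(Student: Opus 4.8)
The plan is to recognize that, up to a $\mu$-independent constant, $E(\cdot)$ is just the squared distance to the point $\lambda + \tfrac12 p$, so that \eqref{eq:abstract_minP} is nothing but the Hilbert projection problem for that point onto $\Lambda$. Concretely, I would first complete the square: expanding $\|\mu-\lambda\|_X^2$ and regrouping the linear term $-2(\mu,\lambda)_X-(\mu,p)_X = -2\bigl(\mu,\lambda+\tfrac12 p\bigr)_X$ gives
\[
 E(\mu) = \bigl\| \mu - \bigl(\lambda + \tfrac12 p\bigr) \bigr\|_X^2 + \|\lambda\|_X^2 - \bigl\| \lambda + \tfrac12 p \bigr\|_X^2 .
\]
Since the last two summands do not depend on $\mu$, a given $\mu^* \in \Lambda$ minimizes $E(\cdot)$ over $\Lambda$ if and only if it minimizes $\mu \mapsto \|\mu - (\lambda + \tfrac12 p)\|_X$ over $\Lambda$.

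Next, because $\Lambda$ is non-empty, closed and convex, the Hilbert projection theorem (recalled above, together with the characterization \eqref{eq:projectionProp_cond}) says this latter problem has exactly one solution, namely $\mathcal{P}(\lambda + \tfrac12 p)$. Combining the two steps shows that $\mu^* = \mathcal{P}(\lambda + \tfrac12 p)$ is the unique minimizer in \eqref{eq:abstract_minP}, as claimed. Uniqueness can also be seen directly from the strict convexity of $E(\cdot)$, which follows from that of $\|\cdot\|_X^2$ together with the linearity of $\mu\mapsto(\mu,p)_X$.

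There is no genuine obstacle here; the only points requiring a little care are the exact bookkeeping of the $\mu$-independent terms when completing the square (so that the equivalence of the two minimization problems is literal) and checking that the hypotheses of the projection theorem — non-emptiness, closedness and convexity of $\Lambda$ — are precisely what is assumed. If one prefers an argument via variational inequalities rather than the projection theorem, one can equivalently note that $\mu^*$ minimizes $E(\cdot)$ over $\Lambda$ iff $\bigl(2(\mu^*-\lambda)-p,\ \mu-\mu^*\bigr)_X \geq 0$ for all $\mu\in\Lambda$, which is precisely \eqref{eq:projectionProp_cond} with $\nu = \lambda + \tfrac12 p$.
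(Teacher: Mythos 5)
Your proof is correct. It is close in spirit to the paper's argument but packaged differently: you complete the square around $\widehat{\mu}:=\lambda+\tfrac{1}{2}p$, so that $E(\mu)=\Vert\mu-\widehat{\mu}\Vert_X^2+\Vert\lambda\Vert_X^2-\Vert\widehat{\mu}\Vert_X^2$ with a $\mu$-independent remainder, which turns \eqref{eq:abstract_minP} literally into the best-approximation problem for $\widehat{\mu}$ and lets you conclude existence and uniqueness of the minimizer directly from the Hilbert projection theorem. The paper instead expands $E(\mu)$ and completes the square around $\mu^*=\mathcal{P}(\widehat{\mu})$ itself, then invokes the variational characterization \eqref{eq:projectionProp_cond} to drop the nonnegative term $2(\mu^*-\mu,\widehat{\mu}-\mu^*)_X$ and obtain the strict pointwise inequality $E(\mu)>E(\mu^*)$ for every $\mu\in\Lambda$ with $\mu\neq\mu^*$. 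Your route is shorter and more conceptual, needing only the existence/uniqueness part of the projection theorem (the characterization \eqref{eq:projectionProp_cond} appears only in your closing remark, where it is indeed equivalent to the first-order optimality condition $\bigl(2(\mu^*-\lambda)-p,\mu-\mu^*\bigr)_X\geq 0$, since $2(\mu^*-\lambda)-p=2(\mu^*-\widehat{\mu})$); the paper's chain of identities is self-contained and makes the gap explicit, showing $E(\mu)$ exceeds $E(\mu^*)$ by $\Vert\mu-\mu^*\Vert_X^2$ plus the nonnegative VI term. Both arguments are complete and rest on the same underlying facts about projections onto the non-empty, closed, convex set $\Lambda$.
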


\begin{proof}
Defining $\widehat{\mu}:=\lambda + \frac{1}{2} \, p$ and exploiting \eqref{eq:projectionProp_cond} we determine for $\mu\in\Lambda$ with $\mu\neq \mu^*$
\begin{align*}
 E(\mu) 
 &= \Vert \mu \Vert_X^2 - 2 \, (\mu,\lambda)_X + \Vert \lambda \Vert_X^2 - ( \mu, p)_X\\
 &= \Vert \mu \Vert_X^2 - 2 \, (\mu,\widehat{\mu})_X + \Vert \lambda \Vert_X^2\\
 &= \Vert \mu \Vert_X^2 - 2 \, (\mu,\mu^*)_X - 2 \, (\mu, \widehat{\mu} - \mu^*)_X + \Vert \lambda \Vert_X^2\\
 &= \Vert \mu - \mu^* \Vert_X^2 + \Vert \lambda \Vert_X^2 - \Vert \mu^*\Vert_X^2 - 2 \, (\mu,\widehat{\mu} - \mu^*)_X \\
 &= \Vert \mu - \mu^* \Vert_X^2 + \Vert \lambda \Vert_X^2 - \Vert \mu^*\Vert_X^2 + 2 \, (\mu^* - \mu,\widehat{\mu} - \mu^*)_X - 2 \, (\mu^*, \widehat{\mu} - \mu^*)_X\\
 &> \Vert \lambda \Vert_X^2 - \Vert \mu^*\Vert_X^2 - 2 \, (\mu^*, \widehat{\mu} - \mu^*)_X\\
 & = \Vert \lambda \Vert_X^2 - \Vert \mu^*\Vert_X^2 - 2 \, (\mu^*, \lambda)_X - (\mu^*,p)_X + 2\, \Vert \mu^*\Vert_X^2 \\
 &= \Vert \mu^* - \lambda \Vert_X^2 - (\mu^*,p)_X\\
 &= E(\mu^*),
\end{align*}
which shows the assertion.
\end{proof}



\bibliographystyle{amsalpha}

\medskip

\end{document}